\newtheorem{theorem}{Theorem}%[section]
\newtheorem{proposition}[theorem]{Proposition}
\newtheorem{lemma}[theorem]{Lemma}
\newtheorem{corollary}[theorem]{Corollary}
\theoremstyle{remark}
\newtheorem{remark}[theorem]{Remark}
\theoremstyle{definition}
\newtheorem{defn}{Definition}%[section]
\theoremstyle{plain} % just in case the style had changed
\newcommand{\thistheoremname}{}
\newtheorem*{genericthm*}{\thistheoremname}
\newenvironment{namedthm*}[1]
  {\renewcommand{\thistheoremname}{#1}%
   \begin{genericthm*}}
  {\end{genericthm*}}
\newcommand{\N}{\mathbb N}     % For Natural numbers
\newcommand{\R}{\mathbb R}     % For Real numbers
\newcommand{\Z}{\mathbb Z}     % For Integers
\newcommand{\cal}{\mathcal}
\renewcommand{\epsilon}{\varepsilon}
\newcommand{\del}{\partial}
\renewcommand{\kappa}{\varkappa}
\renewcommand{\l}{\lambda}
\newcommand{\e}{\varepsilon}
\renewcommand{\d}{\delta}
\newcommand{\s}{\sigma}
\newcommand{\fl}[1]{\lfloor #1 \rfloor}  % Floor function
\newcommand{\flo}[1]{\left\lfloor #1 \right\rfloor} % Floor function with adjusted parentheses
\newcommand{\ceil}[1]{\lceil #1\rceil} % Ceiling function
\renewcommand{\t}[1]{\widetilde{#1}} % Tilde function
\renewcommand{\c}[1]{\check{#1}}% Check
\newcommand{\ind}[1]{ \mathbbm{1}_{\{ #1 \}} }
\newcommand{\red}[1]{{\color{red}#1}}
\definecolor{vio}{rgb}{0.54, 0.17, 0.89}
\let\emptyset\varnothing
\DeclareMathOperator{\gap}{gap}
\title[Convergence of TSAW to TSRM]{Convergence of rescaled ``true'' self-avoiding walks to the T\'oth-Werner ``true'' self-repelling motion}
\author{Elena Kosygina}
\address{Elena Kosygina\\One Bernard Baruch Way \\ Department of Mathematics, Baruch College \\ New York, NY 10010, USA \\and Mathematics Department, NYU Shanghai\\ 567 West Yangsi Rd\\ Pudong, Shanghai, China}
\email{elena.kosygina@baruch.cuny.edu \& ek9@nyu.edu}
\urladdr{http://www.baruch.cuny.edu/math/elenak/}
\author{Jonathon Peterson}
\address{Jonathon Peterson\\Purdue University\\Department of Mathematics\\150 N University Street\\West Lafayette, IN  47907\\USA}
\email{peterson@purdue.edu}
\urladdr{http://www.math.purdue.edu/$\sim$peterson}
\date{\today}
\subjclass[2010]{Primary 60K35; Secondary 60F17, 60J55}
\keywords{``True'' self-avoiding walk, ``true'' self-repelling motion, self-interacting random walk, functional limit theorem, Ray-Knight theorems}
\begin{document}

\begin{abstract}
  We prove that the rescaled ``true'' self-avoiding walk
  $(n^{-2/3}X_{\fl{nt}})_{t\in\R_+}$ converges weakly as $n$ goes to
  infinity to the ``true'' self-repelling motion constructed by T\'oth
  and Werner \cite{TW98}. The proof features a joint generalized
  Ray-Knight theorem for the rescaled local times processes and their
  merge and absorption points as the main tool for showing both the
  tightness and convergence of the finite dimensional
  distributions. Thus, our result can be seen as an example of
  establishing a functional limit theorem for a family of processes by
  inverting the joint generalized Ray-Knight theorem.
\end{abstract}

\maketitle

\section{Introduction and the main result}

The ``true'' self-avoiding walk (TSAW) on $\Z$ is a stochastic process
$(X_n)_{n\in\Z_+}$ which starts from the origin, $P(X_0=0)=1$, and at
each time step moves to one of the two nearest neighbors with
probabilities given by: $P(X_1=\pm 1)=1/2$ and for $n\in\N$
\begin{align}
 P(X_{n+1}&=X_n+1\mid X_0,X_1,\ldots,X_n)=1-P(X_{n+1}=X_n-1\mid X_0,X_1,\ldots,X_n)\nonumber\\ &=\frac{\exp(-\beta ({\cal E}^+_n(X_n)+{\cal E}^-_n(X_n+1)))}{\exp(-\beta ({\cal E}^+_n(X_n)+{\cal E}^-_n(X_n+1)))+\exp(-\beta ({\cal E}^+_n(X_n-1)+{\cal E}^-_n(X_n)))}, \label{tsaw}
\end{align}
where $\beta>0$ is a fixed parameter  and
\begin{equation}\label{Elt}
  \mathcal{E}^+_n(k) := \sum_{i=0}^{n-1} \ind{X_i=k, \, X_{i+1} = k+1},
  \quad 
  \mathcal{E}^-_n(k) := \sum_{i=0}^{n-1} \ind{X_i=k, \, X_{i+1} = k-1},\quad k\in\Z,
\end{equation}
are respectively the numbers of crossings of the directed edges
$k\to k+1$ and $k\to k-1$ before time $n\in\N$. In words, at time $n+1$
the walk makes a step from $X_n$ to $X_n\pm 1$ with probability
proportional to $\lambda:=e^{-\beta}\in(0,1)$ raised to the power
equal to the total number of crossings by time $n$ of the {\em undirected}
edge $\{X_n,X_n\pm 1\}$.

A quantity which is closely related to the edge local times defined
above is the local time of the walk at $k\in \Z$ up to time $n\geq 0$
inclusively,
\begin{equation}
  \label{Lt}
  L(n,k) := \sum_{i=0}^n \ind{X_i=k}=\mathcal{E}^+_n(k)+\mathcal{E}^-_n(k)+\ind{X_n = k}. 
\end{equation}

The TSAW (with site repulsion) on $\Z^d$ was first introduced and
studied in the physics literature, \cite{APP83}, where its properties,
in particular the upper critical dimension, were contrasted with those
of a self-repelling polymer chain.  B.\,T\'oth in \cite{tTSAW} modified
the original model by replacing site repulsion with bond repulsion as
in \eqref{tsaw}. This phenomenologically minor change opened a door to
a rigorous mathematical treatment of TSAW on $\Z$, see also
\cite{TV11} for a continuous time version of a generalization of the
TSAW. The main results of \cite{tTSAW} include
\begin{itemize}
\item [--] a generalized Ray-Knight theorem for local time processes
  stopped at inverse local times (\cite[Theorem 1]{tTSAW}, recalled as
  Theorem~\ref{thm:marginalRK} below) and, as a consequence,
  weak convergence of the inverse local times (\cite[Corollary 2]{tTSAW},
  see Corollary~\ref{cor:tau-lim} with $N=1$);
\item [--] a local limit theorem for the joint law of TSAW and its
  local time at independent geometrically distributed times
  (\cite[(6.6)]{tTSAW}) which immediately implies joint weak
  convergence of TSAW and its local time at independent geometrically
  distributed times, see Theorem~\ref{th:JWLGST} for $k=1$.
\end{itemize}
The results of \cite{tTSAW} naturally led to the conjecture that
rescaled processes $(n^{-2/3}X_{\fl{tn}})_{t\ge 0}$ should converge to
a limit as $n\to\infty$. A candidate $(\mathfrak{X}(t))_{t\ge 0}$ for
(a scalar multiple of) this limit was constructed and studied in a
seminal work \cite{TW98}. The process
$\mathfrak{X}(\cdot):=(\mathfrak{X}(t))_{t\ge 0}$ is known as a
``true'' self-repelling motion (TSRM) (see
Section~\ref{Prelim}). Explicit formulas for the densities of
$\mathfrak{X}(t)$ and of its local time at the current location,
$\mathfrak{H}(t):=\mathfrak{L}(t,\mathfrak{X}(t))$, were given in
\cite{DT13}. Large deviation estimates, the law of iterated logarithm,
and further path properties of the TSRM were obtained in \cite{Dum18}.
In \cite[Section 11]{TW98}, the authors also introduced a discrete
approximation of the pair of processes
$(\mathfrak{X}(\cdot),\mathfrak{H}(\cdot))$. Their approximation
consists of a sequence of lattice-filling paths in $\Z\times\Z_+$
(space\,$\times$\,local time) which, after the diffusive scaling (by
$n$ and $\sqrt{n}$ respectively) should converge weakly to a
plane-filling curve $(\mathfrak{X}(\cdot),\mathfrak{H}(\cdot))$ in
$\R\times\R_+$. The convergence result for this model was
established in \cite{NR06}. We note that this discrete approximation
of the TSRM is very different from the TSAW. In particular, the
local time profiles in this model can only change by $\pm1$
  over each unit of space while, as we shall see below, the
increments of the corresponding processes for TSAW have an
unbounded range.

Hence, somewhat surprisingly, the weak convergence of
$n^{-2/3}X_{\fl{tn}}$ as $n\to\infty$ to a multiple of
$\mathfrak{X}(t)$ at the process level or even for a fixed
deterministic time $t>0$ have not yet been established.  One of the
purposes of our paper is to fill this gap. As an important first step
towards this goal we prove a joint generalized Ray-Knight theorem
(GRKT) for the rescaled local time processes and their merge and
absorption points (Theorem~\ref{thm:JRK-TSAW}). In the next and
probably even more important step, we show how the joint weak
convergence of the rescaled TSAW and its local time at the process
level, in particular tightness (Proposition~\ref{prop:XHtight}), can
be derived {\em directly} from the joint GRKT and properties of
$(\mathfrak{X}(\cdot),\mathfrak{H}(\cdot))$.  This paves the way for a
general method of constructing scaling limits of other
self-interacting random walks for which Ray-Knight theorems have been
proven (see e.g., \cite{tTSAWGBR,tGRK}). For the TSAW considered in
this paper, our task is simplified by the
fact that the limit process has already been constructed and
the properties of the TSRM and its local time
process are well known.

The main result of this paper is the following theorem. Recall that
$\lambda=e^{-\beta}\in(0,1)$ is a fixed model parameter. Throughout
the paper we shall denote the Skorokhod spaces $D([0,\infty))$
  and $D(\R)$ by ${\cal D}_+$ and ${\cal D}$
  respectively. For the reader's convenience the necessary properties
  of the limiting processes are summarized in Section~\ref{Prelim}.

\begin{theorem}\label{thm:TSAW-FLT}
  Let $(\mathfrak{X}(\cdot),\mathfrak{H}(\cdot))$
  be respectively the TSRM and its local time at the current location
  constructed in \cite[(3.15) and Proposition 3.5]{TW98}. Define
\begin{equation}
   \label{sigma}
   \sigma^2 = \frac{\sum_{x \in \Z} x^2 \lambda^{x^2}}{\sum_{x \in \Z}\lambda^{x^2}}.
 \end{equation}
  Then, as
  $n\to\infty$, the sequence of processes
  \begin{equation}\label{discreteTSRM} \left(\frac{X_{\fl{\boldsymbol{\cdot}\,n }}}{(2\sigma)^{-2/3}n^{2/3}},\frac{L\left(\fl{\cdot\, n}, X_{\fl{\boldsymbol{\cdot}\, n}}\right)}{(2\sigma)^{2/3} n^{1/3}}\right)_{n\in\N}
  \end{equation}
converges weakly in the standard Skorokhod topology on
${\cal D}^2_+$ to
 $(\mathfrak{X}(\cdot),\mathfrak{H}(\cdot))$.
\end{theorem}
\begin{remark}
  The constant $\sigma^2$ in \eqref{sigma} is the variance of the
  stationary distribution $\pi(x)\sim\lambda^{x^2}$, $x\in\Z$, of a
  Markov chain on a single site $k\in\N$ whose $n$-th term is equal to
  the excess of the number of right steps by TSAW from $k$ over the
  number of left steps from $k$ at the time when the latter is equal
  to $n$. This Markov chain and the underlying generalized Polya's urn
  processes for the TSAW were introduced and thoroughly studied in
  \cite{tTSAW}, and further analysis of these processes
 (Appendices~\ref{sec:aux} and \ref{sec:overshoot}) yields many
  of the technical results on the TSAW that are needed for our
  work. For the reader's convenience, we briefly review this framework
  in Appendix~\ref{sec:urn}.
\end{remark}

Apart from the mentioned above models studied by B.\,T\'oth, there are
a number of non-Markovian nearest neighbor random walks on $\Z$ for
which edge or site local time processes are Markovian. These local
time processes and, in particular GRKTs, i.e., diffusion
approximations of local times, have been extensively used in the past
to study recurrence properties, zero-one laws, laws of large numbers,
occupation times, scaling limits, and large deviations of such walks,
see, for example, \cite{% tTSAWGBR,tGRK,
  tLTWRRW, TV08,kmLLCRW,% TV11,
  HTV,
  pERWLDP,kzERWsurvey,mpvCLT,pXSDERW,DK14,ABO14,kosERWPC,% kpERWPCS,
  kpERWMCS,PT17,Tra18,kmpCRW,KMP23,MM24} and references
therein. Reviewing methods used to prove scaling limits for these
models, one naturally arrives at the question of
whether a GRKT uniquely identifies the scaling limit and might be, in
itself, a sufficient tool for establishing a functional limit
theorem. A negative answer to this question was given in \cite[Theorem
1.2]{KMP23}. The current paper suggests that for several classes of
models on $\Z$ a {\em joint} GRKT might be just the right tool in the
sense that the joint GRKT can be ``inverted'' to imply the convergence
of the rescaled random walks to a limit process that it identifies
uniquely.

The paper is organized as follows. In Section~\ref{Prelim} we
summarize properties of the TSRM and its local times as well as of
their discrete counterparts. After introducing the necessary notation
we state the joint GRKT, Theorem~\ref{thm:JRK-TSAW} and its
applications, in particular tightness,
Proposition~\ref{prop:XHtight}. In Section ~\ref{Outline}, using the
results stated in the previous section we prove
Theorem~\ref{thm:TSAW-FLT}. A proof of Theorem~\ref{thm:JRK-TSAW} is
given in Section~\ref{sec:JRK}. Section~\ref{sec:tight} contains a
proof of tightness results. Four appendices provide the necessary
auxiliary constructions and proofs of some technical results used in
the main body of the paper.

\section{Preliminaries, the joint GRKT and its consequences}\label{Prelim}

\subsection{Properties of TSRM and its local time}
The $\R\times\R_+$-valued random space-filling curve
$t\mapsto (\mathfrak{X}(t),\mathfrak{H}(t)),\ t\ge 0$, was constructed
in \cite[Section 3]{TW98}. The TSRM $\mathfrak{X}(\cdot)$ is defined
as the projection on the first coordinate, \cite[(3.15)]{TW98}. A very
rough outline of the construction is as follows. The authors first
build a system of coalescing reflected/absorbed independent Brownian
motions running forward and backward from every point
$(x,h)\in\R\times(0,\infty)$, see Figure~\ref{fig:RAB}.
\begin{figure}[h]
 \includegraphics[width=0.6\textwidth]{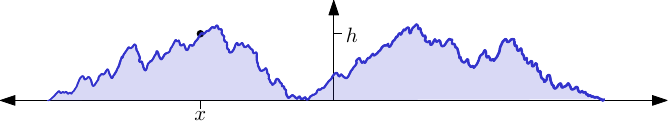} 
 \caption{A reflected/absorbed two-sided Brownian motion started from $(x,h)$ with $x<0$ . It is reflected on the interval $[x,0]$ and absorbed on $\R\setminus[x,0]$. $T(x,h)$ is the area between the curve and the horizontal axis. }\label{fig:RAB}
\end{figure}
They show that, almost surely, the areas $T(x,h)$ enclosed by the
forward and backward reflected/absorbed Brownian motions
starting from $(x,h)$ and the space axis are
strictly ordered and dense in $\R_+$, i.e., almost surely, the random
map $\R\times (0,\infty)\ni (x,h)\mapsto T(x,h)\in \R_+$ is
injective and its image is dense in $\R_+$. The process
$(\mathfrak{X}(t),\mathfrak{H}(t)), \ t\ge 0$, is then defined by
inverting this map and extending its inverse to the whole $\R_+$ by
continuity. The area $T(x,h)$ is exactly the time by which the TSRM
accumulates the local time $h$ at location $x$ and the forward and
backward curves starting from the point $(x,h)$
give the local time profile of the process at time $T(x,h)$ so that
$\mathfrak{L}(T(x,h),x)=h=\mathfrak{H}(T(x,h))$.

The system of independent coalescing Brownian motions starting from
every point in $\R^2$ is known as a ``Brownian web'' (see
\cite{Arr79,FINR02,FINR04,SSS17} and references therein).  Since we
will need neither the details of the above construction nor the
Brownian web in the present work, we will restrict ourselves to a
brief overview of some of the important properties of the TSRM and its
local time that will be used later in the paper. We refer to
\cite{TW98} for proofs and additional details.

\noindent $\circ$ {\em Continuity and scaling.} The process $t\mapsto \mathfrak{X}(t)$, $t\ge 0$, is almost surely continuous and satisfies the following scaling relation: 
 \begin{equation}\label{TSRM-scaling}
  (\mathfrak{X}(ct) )_{t\geq 0} \overset{\text{Law}}{=} ( c^{2/3} \mathfrak{X}(t) )_{t\geq 0}, \qquad \forall c\ge 0.  
\end{equation}

\noindent $\circ$ {\em Local times and occupation time formulas.}
There is a local time process
$(\mathfrak{L}(t,x))_{t\geq 0, x \in \R}$ for $\mathfrak{X}$ in the
sense that, almost surely, for all $t\geq 0$ the occupation time
measure $\mu_t$ of $\mathfrak{X}$ on the time interval $[0,t]$ defined
by
 \[
  \mu_t(A) = \int_0^t \ind{\mathfrak{X}(s) \in A} \, ds, \qquad \text{for all Borel } A \subset \R, 
 \]
 is absolutely continuous with respect to the Lebesgue measure on the
 Borel $\s$-field with density
 $\mathfrak{L}(t,\cdot)$. The mapping $t\mapsto \mathfrak{L}(t,\cdot)$
 is non-decreasing continuous from $[0,\infty)$ to the space of
 continuous functions with compact support with topology induced by
 uniform convergence on compact intervals.

 The process $\mathfrak{H}(t)=\mathfrak{L}(t,\mathfrak{X}(t))$,
 $t\ge 0$, is almost surely continuous and we have the following generalization of the occupation time formula: almost surely, for
 any bounded, measurable function $\phi:\R\times\R_+\to \R$ and any
 $t\ge 0$, 
 \begin{equation*}
   \int_0^t \phi(\mathfrak{X}(s),\mathfrak{H}(s))\, ds = \int_\R \int_0^t\phi(x,\mathfrak{L}(s,x)) d_s\mathfrak{L}(s,x) \, dx=\int_\R\int_{[0,\mathfrak{L}(t,x))}\phi(x,h)\,dh\,dx. 
 \end{equation*}
 The first equality is an application of \cite[Theorem 4.2(ii)]{TW98}
 with $g(t,x)=\phi(x,\mathfrak{L}(t,x))$ and the second is the change
 of variables.  In fact, we will need a
 multi-dimensional version of this formula which can be obtained from
 the above by using Fubini's theorem: almost surely, for any bounded,
 measurable $f:\R^k\times\R^k_+ \to \R$ and any
 $t_1,t_2,\ldots,t_k\ge 0$,
 \begin{multline}
   \label{otf}
   \int_{\prod_{i=1}^k [0,t_i]} \phi(\mathfrak{X}(s_1),\dots,\mathfrak{X}(s_k),\mathfrak{H}(s_1),\dots,\mathfrak{H}(s_k)) \, ds_1 \ldots ds_k \\ = \int_{\R^k}\int_{\prod_{i=1}^k [0,\mathfrak{L}(t_i,x_i))} \phi(x_1,\ldots,x_k,h_1,\ldots,h_k) \,dh_1\ldots dh_k\, dx_1 \ldots dx_k.  
 \end{multline}
 
\noindent $\circ$ {\em Ray-Knight curves and their merge and absorption times.} For any $(x,h) \in \R\times\R_+$ let $\mathfrak{t}_{x,h} $ be the first time when the local time of $\mathfrak{X}$ at $x$ exceeds $h$, 
\begin{equation}
  \label{txh}
  \mathfrak{t}_{x,h} = \inf\{ t\geq 0: \, \mathfrak{L}(t,x) > h \},
\end{equation}
and define the local time profile of $\mathfrak{X}$ at time $\mathfrak{t}_{x,h}$ by
\begin{equation}\label{RK-TSRM}
 \Lambda_{x,h}(y) = \mathfrak{L}\left( \mathfrak{t}_{x,h}, y \right), \quad y \in \R. 
\end{equation}
We note that $\Lambda_{x,h}(x)=h$ and will refer to
$\Lambda_{x,h}(\cdot)$ as a Ray-Knight curve indexed by $(x,h)$.

For $x,x'\in \R$ and $h,h' \geq 0$ let
 \[
  \mathfrak{m}_{(x,h),(x',h')}^+ 
  %= \sup\left\{ y \geq x\vee x': \Lambda_{x,h}(y) \neq \Lambda_{x',h'}(y) \right\}
  =  \inf\left\{ y \geq x\vee x': \Lambda_{x,h}(y) = \Lambda_{x',h'}(y) \right\},
 \]
and 
\[
 \mathfrak{m}_{(x,h),(x',h')}^- 
 %= \inf\left\{ y \leq x\wedge x': \Lambda_{x,h}(y) \neq \Lambda_{x',h'}(y) \right\}
  =  \sup\left\{ y \leq x\wedge x': \Lambda_{x,h}(y) = \Lambda_{x',h'}(y) \right\}. 
\]
Thus, $\mathfrak{m}_{(x,h),(x',h')}^+$ and $\mathfrak{m}_{(x,h),(x',h')}^-$ are the locations right of $x\vee x'$ and left of $x\wedge x'$, respectively, where the curves $\Lambda_{x,h}$ and $\Lambda_{x',h'}$ coalesce. 
They also represent the maximal and minimal points, respectively, that the process $\mathfrak{X}$ reaches between the times $\mathfrak{t}_{x,h}$ and $\mathfrak{t}_{x',h'}$.
Since $\Lambda_{0,0}(y) \equiv 0$, the values $\mathfrak{m}_{(0,0),(x,h)}^+=:\mathfrak{m}_{x,h}^+$ and $\mathfrak{m}_{(0,0),(x,h)}^-=:\mathfrak{m}_{x,h}^-$ give the locations to the right of $x\vee 0$ and left of $x\wedge 0$, respectively, where the curve $\Lambda_{x,h}$ is absorbed at zero, and thus
are also the maximum and the minimum of the TSRM up to $\mathfrak{t}_{x,h}$ so that we can write  
\begin{equation}\label{txh-integral}
 \mathfrak{t}_{x,h} = \int_{\R}\Lambda_{x,h}(y) \, dy=\int_{\mathfrak{m}_{x,h}^-}^{\mathfrak{m}_{x,h}^+}\Lambda_{x,h}(y) \, dy, \qquad \forall (x,h) \in \R\times\R_+. 
\end{equation}

\noindent $\circ$ {\em Joint Ray-Knight Theorems for the TSRM.}  For
any finite set of points $(x_1,h_1), (x_2,h_2),\ldots,$
$(x_k,h_k) \in \R \times [0,\infty)$, the curves
$(\Lambda_{x_i,h_i}(\cdot))_{i\leq k}$ are a family of
coalescing reflected/ab\-sorbed Brownian
motions. We will need the following more precise description.
\begin{enumerate}[label={\tiny $\bullet$},leftmargin=*]  
\item {\em Marginal curve distribution.} For each $i$ the process
  $\Lambda_{x_i,h_i}(\cdot)$ is a two-sided reflected/ab\-sorbed
  Brownian motion. To the right of $x_i$ the process is a
  one-dimensional Brownian motion started with initial value
  $\Lambda_{x_i,h_i}(x_i) = h_i$ which is reflected at 0 on the
  interval $[x_i,0]$ if $x_i < 0$ and absorbed at 0 on
  $[0\vee x_i, \infty)$. To the left of $x_i$ the process is a
  one-dimensional Brownian motion (which is independent of the process
  to the right of $x_i$) also started at $h_i$, reflected at 0 on the
  interval $[0,x_i]$ if $x_i > 0$ and absorbed at 0 on the interval
  $(-\infty,x_i \wedge 0]$. See 
  Figure~\ref{fig:RAB}.
\item {\em Joint forward curve distribution.}  For each $i$ let
  $\Lambda_{x_i,h_i}^+(\cdot)$ denote the part of the curve
  $\Lambda_{x_i,h_i}(\cdot)$ restricted to the interval
  $[x_i,\infty)$. Then the joint distribution of the curves
  $(\Lambda_{x_i,h_i}^+(\cdot))_{i\leq k}$ is as follows: the curves
  evolve as reflected/absorbed Brownian motions (as described above)
  which are independent until they meet and which
  coalesce upon their first intersection.  See
  Figure~\ref{fig:CRAB}. % The joint
  % distribution of the backward curves $\Lambda_{x_i,h_i}^-(\cdot)$,
  % $1\le i\le k$, i.e., the restrictions of $\Lambda_{x_i,h_i}(\cdot)$
  % to intervals $(-\infty,x_i]$ respectively is defined similarly.
\begin{figure}[h]
 \includegraphics[width=0.6\textwidth]{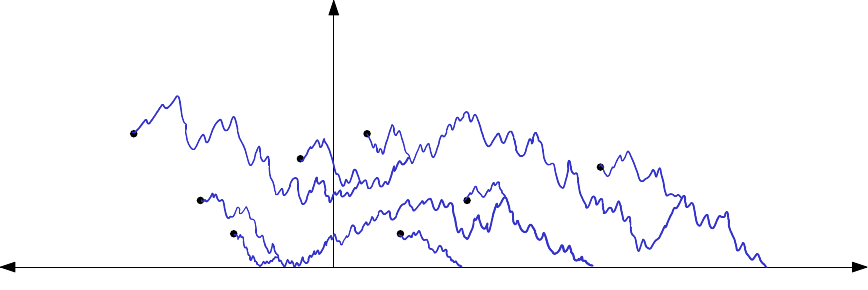}
 \caption{A system of independent coalescing, reflected/absorbed
   Brownian motions
   $(\Lambda_{x_i,h_i}^+(\cdot))_{i\leq k}$.}\label{fig:CRAB}
\end{figure}
  \item {\em Backward curves and duality.}  For
    $(x,h)\in\R\times(0,\infty)$ define the backward curve
    by
    \[\Lambda^-_{x,h}(y)=\sup\{h'>0: \ \Lambda_{y,h'}(x)<h\},\quad
      y\le x,\] where $\sup\emptyset:=0$, and
    $\Lambda^-_{x,0}(y)=\inf\{\Lambda^-_{x,h}(y): h>0\}$, $y\le
    x$. 
    
    Let $\mathbb{F}^+:=\{(x,h,y)\in\R^3:y\ge x, h>0\}$.
    The duality theorem (\cite[Theorem 2.3]{TW98}) states that the
    two processes
    \[\mathbb{F}^+\ni (x,h,y)\mapsto
      \Lambda^+_{x,h}(y)\in\R_+\quad \text{and}\quad \mathbb{F}^+\ni
      (x,h,y)\mapsto \Lambda^-_{-x,h}(-y)\in\R_+\] are identical in
    law. We note two important consequences. First, %the restrictions
    %of $\Lambda_{x_i,h_i}(\cdot)$ to $(-\infty,x_i]$, $1\le i\le k$,
    the backward curves $\Lambda^-_{x_i,h_i}(\cdot)$
    are Brownian motions reflected at $0$ on $[0,x_i]$ if $x_i>0$ and
    absorbed at $0$ on $(-\infty, x_i\wedge 0]$ which are independent
    until they meet and which coalesce upon their first
    intersection. Second, while the families
    $(\Lambda^+_{x_i,h_i}(\cdot))_{1\le i\le k}$ and
    $(\Lambda^-_{x_i,h_i}(\cdot))_{1\le i\le k}$ are not independent
    in general, the
    joint distribution of one of these families uniquely determines
    the joint distribution of the other and, thus, the joint
    distribution of 
    $(\Lambda_{x_i,h_i}(\cdot))_{1\le i\le k}$. We refer to
    \cite{STW00} for further details about the relationship between
    the forward and backward curves.  
\end{enumerate}

\begin{remark}
   The inverse local times $\mathfrak{t}_{x,h}$ (see \eqref{txh}) that
   we use in our paper correspond to $T^+(x,h)$ in \cite{TW98}.
  %This is a stopping time with respect to the filtration $\mathcal{F}_t^+ = \bigcap_{s>t} \mathcal{F}_s$, where $\mathcal{F}_t = \s(\mathfrak{X}_s, \, s\leq t)$.
   Many of the results in \cite{TW98} are stated instead in terms of
   the stopping times
   $T(x,h) = \inf\{t\geq 0: \, \mathfrak{L}(t,x) = h \}$ for
   $x \in \R$ and $h>0$.  The only difference between $T^+(x,h)$ and
   $T(x,h)$ is that $h\mapsto T(x,h)$ is left continuous while
   $h \mapsto T^+(x,h)$ is right continuous. In particular, for a
   fixed $(x,h)$ with $h>0$ we have that $T(x,h) = T^+(x,h)$, almost
   surely.  The choice of $\mathfrak{t}_{x,h} = T^+(x,h)$ is more
   convenient for our purposes, in part because then the Ray-Knight
   Theorems in \cite[Theorem 4.3]{TW98} are true for all
   $h\geq 0$ rather than just for $h>0$.
 \end{remark}

\subsection{Discrete Ray-Knight curves and the joint GRKT}
Recall the definition of local times \eqref{Lt} and let
\[\tau_{k,m} = \inf\{ n\geq 0: L(n,k) > m \},\quad k \in\Z, \ m\in\Z_+, \] be
the time of the $(m+1)$-th visit of the TSAW to $k$.  The joint GRKT
for the TSAW will describe the convergence of the rescaled local time
profile of the random walk at finitely many stopping times
$\tau_{k,m}$ with $k$ of order $n$ and $m$ of order $\sqrt{n}$. To
this end, for any $(x,h)\in \R\times\R_+$ we define the rescaled local
times curve $\Lambda_{x,h}^n(\cdot)$ at time
$\tau_{\fl{xn},\fl{2\s h\sqrt{n}}}$ by
\begin{equation}\label{dRKpath}
 \Lambda_{x,h}^n(y) = \frac{L\left(\tau_{\fl{xn},\fl{2\s h\sqrt{n}}}, \fl{yn} \right)}{2 \sigma\sqrt{n}}, 
\end{equation}
where $\sigma$ is given by \eqref{sigma}.  The joint GRKT stated below
gives the joint weak convergence of a finite set of curves
$\Lambda_{x_i,h_i}^n(\cdot)$, $1\le i\le N$, together with the
locations where these curves coalesce or get absorbed.  Hence, we
shall need notation for recording this information. For any
$k,k' \in \Z$ and $m,m' \geq 0$ let
\begin{align*}
  \mu_{(k,m),(k',m')}^+ &= \max\left\{ X_i: \tau_{k,m} \wedge \tau_{k',m'}\leq i \leq \tau_{k,m} \vee \tau_{k',m'} \right\}\\ &=\min\left\{ j \geq k \vee k': \mathcal{E}_{\tau_{k,m}}^+(j) = \mathcal{E}_{\tau_{k',m'}}^+(j) \right\}  
\shortintertext{and} 
  \mu_{(k,m),(k',m')}^- & = \min\left\{ X_i: \tau_{k,m} \wedge \tau_{k',m'} \leq i \leq \tau_{k,m} \vee \tau_{k',m'} \right\}\\ &= \max\left\{ j \leq k \wedge k': \mathcal{E}_{\tau_{k,m}}^-(j) = \mathcal{E}_{\tau_{k',m'}}^-(j) \right\}
\end{align*}
 be the farthest the random walk ever is to the right or left between the stopping times $\tau_{k,m}$ and $\tau_{k',m'}$. 
 Note that these random variables differ by at most 1 from the first location to the right of $k \vee k'$ or left of $k\wedge k'$, respectively, where the local time processes at times $\tau_{k,m}$ and $\tau_{k',m'}$ are equal. 
 % More precisely, if 
%  \[
%  \tilde{\mu}^+_{(k,m),(k',m')} = \min\{ j \geq k \vee k': L(\tau_{k,m},j) = L(\tau_{k',m'},j) \}, 
% \]
% then $ \tilde{\mu}^+_{(k,m),(k',m')} -  \mu^+_{(k,m),(k',m')} \in \{0,1\}$.
 % Therefore,
 % $n^{-1}\mu^{\pm}_{(\fl{xn},\fl{2\s h \sqrt{n}}), (\fl{x',n},\fl{2\s
 %     h', \sqrt{n}})}$ differs from right/left merging location of the
 % curves $\Lambda_{x,h}^n$ and $\Lambda_{x',h'}^n$ by at most $n^{-1}$.
 Since $\Lambda^n_{0,0}(y) \equiv 0$, the values
 $\mu^+_{(0,0),(k,m)}=:\mu^+_{k,m}$ and
 $\mu^-_{(0,0),(k,m)}=:\mu^-_{k,m}$ give the locations to the right
 of $k\vee 0$ and left of $k\wedge 0$, respectively, where the curve
 $\Lambda^n_{k,m}$ is absorbed at zero. These values are also the
 maximum and the minimum of the TSAW up to time $\tau_{k,m}$.

 To lighten the notation, for $x,x'\in\R$ and $h,h'\in\R_+$ we let
 \begin{align}\label{tnxh}
   \tau^n_{x,h}  &= \tau_{\fl{xn},\fl{2\s h\sqrt{n}}}; \\ \label{mnxh} \mu^{n,\pm}_{x,h} & = n^{-1}\mu^\pm_{(\fl{xn},\fl{2\s h \sqrt{n}})},\quad
   \mu^{n,\pm}_{(x,h),(x',h')} = n^{-1}\mu^\pm_{(\fl{xn},\fl{2\s h \sqrt{n}}),(\fl{x'n},\fl{2\s h' \sqrt{n}})}. 
 \end{align}

\begin{theorem}[Joint GRKT]\label{thm:JRK-TSAW}
  For any $N\in\N$ and any choice of $(x_i,h_i)\in \R \times \R_+$,
  $1\le i\le N$, the joint distribution of the processes
  $\Lambda_{x_i,h_i}^n(\cdot)$, the rescaled endpoints
  $\mu^{n,*}_{x_i, h_i} $, and merging points
  $\mu^{n,*}_{(x_i,h_i),(x_j,h_j)}$,
  $1\le i\leq N, \, i<j\le N,\, * \in \{+,-\}$, converges as
  $n\to\infty$ to the joint distribution of
  $\Lambda_{x_i,h_i}(\cdot)$, $\mathfrak{m}^*_{x_i,h_i}$, and
  $\mathfrak{m}^*_{(x_i,h_i),(x_j,h_j)} $,
  $1\le i\leq N, \, i<j\le N,\, * \in \mathcal\{+,-\}$. The
    convergence is in the product space
    ${\cal D}^N \times \R^{2N} \times \R^{N(N-1)}$, where ${\cal D}$ is equipped with the
    topology of uniform convergence on compact subsets.
\end{theorem}

\begin{remark} Starting from this result, it seems natural to
    prove Theorem~\ref{thm:TSAW-FLT} using an approach similar to that
    in \cite{NR06}, which relies on the well-developed Brownian web
    framework.  As a first step, the convergence in
    Theorem~\ref{thm:JRK-TSAW} would be extended to the convergence of
    the entire family of rescaled discrete local time curves
    $\{\Lambda_{x,h}^n(\cdot)\}_{(x,h) \in \frac{1}{n}\Z \times
      \frac{1}{2\s\sqrt{n}}\Z_+}$, the so-called discrete web, to the
    Brownian web (see \cite{FINR04} or \cite{SSS17} for precise
    definitions of the appropriate space and topology for this
    convergence). Next, one would use properties of the Brownian web,
    its dual, and the double Brownian web to establish tightness and
    convergence of the process in \eqref{discreteTSRM}
    %$({\cal X}_n(t),{\cal H}_n(t))_{t\in (2\sigma n^{3/2})^{-1}\Z_+}$
    to $(\mathfrak{X}(\cdot),\mathfrak{H}(\cdot))$.  Since our primary
    interest is to show the convergence of the rescaled TSAW to the
    TSRM, we regard bypassing the convergence of discrete webs to the
    Brownian web -- and instead proving the convergence to the TSRM
    directly, using only Theorem \ref{thm:JRK-TSAW} and properties of
    the TSRM -- as an advantage of our method. Moreover, our
    tightness argument relies solely on Theorem~\ref{thm:JRK-TSAW} and
    is readily adaptable to some walks for which the scaling limit has
    not yet been constructed, such as polynomially self-repelling or
    generalized TSAW models.
\end{remark}

For $N=1$ Theorem \ref{thm:JRK-TSAW} was proven in \cite[Theorem 1]{tTSAW}. We state
it and discuss a minor difference with our formulation in Section
\ref{sec:JRK}. In \cite{TW98} the authors suggested that the methods
of \cite{tTSAW} can be used to show the joint convergence of the
rescaled edge local times to independent coalescing reflected/absorbed
Brownian motions, see \cite[(1.13)]{TW98}. We show that this is indeed
the case. Yet we would like to point out that the generalization from
$N=1$ to an arbitrary $N$ is non-trivial and quite technical due to the
included convergence of the merge and absorption points. The reader is
referred to Section \ref{sec:JRK} for a proof.

\subsection{Consequences of the joint GRKT}

We conclude this section with two consequences of
Theorem~\ref{thm:JRK-TSAW} that will be the key tools used in our
proof of Theorem \ref{thm:TSAW-FLT}.

The first one is the weak convergence of the rescaled inverse local
times of the TSAW to those of the true self-repelling motion.

\begin{corollary}\label{cor:tau-lim}
  For any $N\geq 1$ and any choice of $(x_i,h_i)\in \R \times \R_+$,
  $1\le i\le N$, we have
\[
 \left( \frac{\tau^n_{x_i,h_i}}{2 \s n^{3/2}} \right)_{i\leq N} 
 \underset{n\to\infty}{\Longrightarrow}
 \left( \mathfrak{t}_{x_i,h_i} \right)_{i\leq N}. 
\]
\end{corollary}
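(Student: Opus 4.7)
The plan is to realize each rescaled inverse local time $\tau^n_{x_i,h_i}/(2\sigma n^{3/2})$ as an integral of the discrete Ray-Knight curve $\Lambda^n_{x_i,h_i}$, and then to deduce the joint weak convergence from the joint GRKT by a Skorokhod representation / bounded convergence argument. The only delicate ingredient will be uniform control of the random supports of the $\Lambda^n_{x_i,h_i}$, which turns out to be supplied \emph{for free} by the joint convergence of the absorption points $\mu^{n,\pm}_{x_i,h_i}$ already built into Theorem~\ref{thm:JRK-TSAW}.

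First I would establish the discrete analogue of \eqref{txh-integral}. Since $\sum_{j\in\Z}L(\tau_{k,m},j)=\tau_{k,m}+1$ and $\Lambda^n_{x,h}$ is constant, equal to $L(\tau^n_{x,h},j)/(2\sigma\sqrt{n})$, on each interval $[j/n,(j+1)/n)$, an elementary computation yields
\begin{equation*}
  \int_\R \Lambda^n_{x,h}(y)\, dy \;=\; \sum_{j\in\Z} \frac{1}{n}\cdot\frac{L(\tau^n_{x,h},j)}{2\sigma\sqrt{n}} \;=\; \frac{\tau^n_{x,h}+1}{2\sigma n^{3/2}},
\end{equation*}
while the integrand vanishes outside the random compact interval $[\mu^{n,-}_{x,h},\mu^{n,+}_{x,h}+1/n]$.

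Then I would apply Skorokhod's representation to Theorem~\ref{thm:JRK-TSAW} to obtain, on a suitable probability space, almost sure joint convergence of $(\Lambda^n_{x_i,h_i},\mu^{n,\pm}_{x_i,h_i})_{i\le N}$ to $(\Lambda_{x_i,h_i},\mathfrak{m}^\pm_{x_i,h_i})_{i\le N}$. Because each limit curve $\Lambda_{x_i,h_i}$ is continuous with compact support, the Skorokhod convergence of the discrete curves amounts to uniform convergence on compact subsets of $\R$, and the a.s. convergence of the endpoints lets one pick a (random) finite $K$ so that, for every $i\le N$, the supports of $\Lambda^n_{x_i,h_i}$ are contained in $[-K,K]$ for all large $n$ and the curves are uniformly bounded on $[-K,K]$. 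Bounded convergence then gives, jointly for $1\le i\le N$,
\begin{equation*}
  \frac{\tau^n_{x_i,h_i}}{2\sigma n^{3/2}} \;=\; \int_{-K}^K \Lambda^n_{x_i,h_i}(y)\, dy \;-\; \frac{1}{2\sigma n^{3/2}} \;\longrightarrow\; \int_{\mathfrak{m}^-_{x_i,h_i}}^{\mathfrak{m}^+_{x_i,h_i}} \Lambda_{x_i,h_i}(y)\, dy \;=\; \mathfrak{t}_{x_i,h_i},
\end{equation*}
the last equality being \eqref{txh-integral}. Since almost sure convergence implies convergence in distribution, this yields the claimed joint weak convergence.
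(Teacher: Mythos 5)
Your proposal is correct and follows essentially the same route as the paper: it represents $\tau^n_{x_i,h_i}/(2\sigma n^{3/2})$ (up to $1/(2\sigma n^{3/2})$) as $\int_\R \Lambda^n_{x_i,h_i}(y)\,dy$ and invokes Theorem~\ref{thm:JRK-TSAW} together with \eqref{txh-integral}, using both the convergence of the curves and of their endpoints, exactly as the paper does. Your Skorokhod representation and support-containment argument simply fills in the details that the paper leaves to the reader with ``follows easily.''
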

\begin{proof}
%Since $\sum_{k \in \Z} L(n,k) = n+1$
Since $\tau_{k,m} + 1 = \sum_{j \in \Z} L(\tau_{k,m},i)$, we see that 
\begin{align}
 \frac{\tau^n_{x_i,h_i}}{2\s n^{3/2}} + \frac{1}{2\s n^{3/2}}
 &= \frac{1}{2\s n^{3/2}}\sum_{k\in \Z} L\left(\tau^n_{x_i,h_i}, k \right) % = \frac{1}{n} \sum_{k \in \Z} \frac{L\left(\tau^n_{x_i,h_i}, \left\lfloor \frac{k}{n} n \right\rfloor \right)}{2 \s\sqrt{n}} - \frac{1}{2\s n^{3/2}} 
 \nonumber \\
  & \overset{\eqref{dRKpath}}{=} \frac{1}{n} \sum_{k \in \Z} \Lambda^n_{x_i,h_i}\left(k/n\right)  = \int_\R \Lambda^n_{x_i,h_i}(y) \, dy =\int_{\mu^{n,-}_{x_i,h_i}}^{\mu^{n,+}_{x_i,h_i}}\Lambda^n_{x_i,h_i}(y) \, dy. \label{txn-integral}
\end{align}
The statement now follows from \eqref{txh-integral}
  and Theorem \ref{thm:JRK-TSAW}, from which we need the joint
  convergence of $(\Lambda^n_{x_i,h_i}(\cdot))_{1\le i\le N}$ and the
  tightness of distributions of $\mu_{x_i,h_i}^{n,\pm}$,
  $1\le i\le N$. We omit the details as they are standard.
% the fact that 
% \[
% \int_\R \Lambda_{x,h}(y) \, dy =\int_{\mathfrak{m}_{(x,h)}^-}^{\mathfrak{m}_{(x,h)}^+}\Lambda_{x,h}(y) \, dy \overset{\eqref{txh-integral}}{=} \mathfrak{t}_{x,h}. 
% \] 
\end{proof}

An immediate consequence of Corollary \ref{cor:tau-lim} is convergence of the multi-dimensional Laplace transforms of these rescaled hitting times. That is, for any fixed $k\in\N$, $(x_i,h_i)\in \R\times\R_+$, and $\lambda_i > 0$, $i\in\{1,2,\ldots,k\}$, we have 
%\red{(Question: should we change $k$ to $N$ here to be consistent with the GJRK theorem stated above?)}
\[
 \lim_{n\to\infty} E\left[ e^{-n^{-3/2}\sum_{i=1}^k \lambda_i  \tau^n_{x_i ,h_i} } \right] = E\left[ e^{-\sum_{i=1}^k 2\s \l_i \mathfrak{t}_{x_i,h_i}}  \right].  
\]
The above suggests that scaling of the time by $n^{3/2}$ rather than by $n$ might be more natural for the methods we are going to use. To this end, we define for $t\ge 0$
\begin{equation}
  \label{XH}
  \mathcal{X}_n(t) = \frac{X_{\fl{t n^{3/2}}}}{(2\s)^{-2/3} n},\ \ \mathcal{H}_n(t)=\frac{L(\fl{t n^{3/2}}, X_{\fl{t n^{3/2}}})}{(2\sigma)^{2/3} \sqrt{n}},
\end{equation}
where $\sigma$ is given by \eqref{sigma}. At the beginning of the next section we shall briefly argue that the
weak convergence of ${\cal D}^2_+$-valued random variables
$(\mathcal{X}_n(\cdot),\mathcal{H}_n(\cdot))$, $n\in\N$, is equivalent
to the convergence claimed in Theorem~\ref{thm:TSAW-FLT}.
 
% \noindent\textbf{Consequence 2: Tightness.}
The second main consequence of Theorem~\ref{thm:JRK-TSAW} is tightness of the rescaled paths of the TSAW and its local times.

\begin{proposition}\label{prop:XHtight}
 The sequence of ${\cal D}^2_+$-valued random variables $(\mathcal{X}_n,\mathcal{H}_n)$, $n\in\N$, is tight (with respect to the Skorokhod $J_1$ topology on ${\cal D}^2_+$). Moreover, any subsequential limit is concentrated on paths that are continuous.  
\end{proposition}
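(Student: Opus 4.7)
The strategy is to verify the tightness criterion in the uniform modulus of continuity, which automatically ensures any subsequential limit is concentrated on continuous paths. All bounds will be obtained by transferring path properties of the continuous TSRM $(\mathfrak{X},\mathfrak{H})$ to the rescaled walks via Theorem~\ref{thm:JRK-TSAW} and Corollary~\ref{cor:tau-lim}.

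Fix $T, \eta > 0$. For boundedness, the event $\{\sup_{t \leq T}|\mathcal{X}_n(t)| > A\}$ forces $\tau^n_{\pm a, 0} \leq Tn^{3/2}$ for $a = (2\sigma)^{-2/3}A$ for one of the signs, so by Corollary~\ref{cor:tau-lim} its probability converges to $\mathbb{P}(\mathfrak{t}_{a, 0} \wedge \mathfrak{t}_{-a, 0} \leq T/(2\sigma))$, which is at most $\eta$ for $A$ large enough by a.s.\ continuity of $\mathfrak{X}$. To bound $\mathcal{H}_n$, one uses $\mathcal{H}_n(t) \leq (2\sigma)^{1/3}\max_y \Lambda^n_{x, h}(y)$ for any Ray-Knight index $(x, h)$ whose stopping time exceeds $Tn^{3/2}$; the joint convergence in Theorem~\ref{thm:JRK-TSAW} together with a.s.\ finiteness of $\max_y \Lambda_{x, h}(y)$ yields the corresponding uniform bound.

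For the modulus, fix $\eta' > 0$. Using the plane-filling property of $(\mathfrak{X}, \mathfrak{H})$ and the a.s.\ continuity of both processes, one selects a sufficiently fine deterministic grid $G = \{(x_i, h_i)\}_{i=1}^N \subset [-A, A] \times [0, A]$ and $\delta > 0$ such that, with probability at least $1 - \eta$: (a) the ordered values of $\mathfrak{t}_{x_i, h_i}$ lying in $[0, T/(2\sigma)]$ partition this interval with maximum gap less than $\delta/(2\sigma)$; (b) for any two consecutive sorted stopping times, the merging interval $[\mathfrak{m}^-, \mathfrak{m}^+]$ has length less than $(2\sigma)^{-2/3}\eta'$; and (c) the two associated Ray-Knight curves differ by less than $(2\sigma)^{1/3}\eta'$ on this merging interval. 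By Theorem~\ref{thm:JRK-TSAW}, the corresponding discrete objects --- the rescaled times $\tau^n_{x_i, h_i}/n^{3/2}$, the rescaled merging points $\mu^{n, \pm}_{(x_i, h_i), (x_j, h_j)}$, and the rescaled curves $\Lambda^n_{x_i, h_i}$ --- converge jointly to their TSRM counterparts, so (a)--(c) transfer to the discrete setting with high probability for large $n$. On each subinterval of the discrete partition, $\mathcal{X}_n(t)$ is confined to an interval of length $(2\sigma)^{2/3}(\mu^{n, +} - \mu^{n, -}) < \eta'$, and $\mathcal{H}_n(t)$, being the walk's local time at its current position and sandwiched between the neighbouring Ray-Knight profiles, stays within $\eta'$ of the endpoint values. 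This yields $w(\mathcal{X}_n, \delta, T) + w(\mathcal{H}_n, \delta, T) < 2\eta'$ in the uniform modulus of continuity with probability at least $1 - 2\eta$, which is the required criterion.

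The main difficulties are twofold. First, verifying the density property (a) for a fine but fixed deterministic grid uses the plane-filling nature of $(\mathfrak{X}, \mathfrak{H})$ and a continuity estimate for $(x, h) \mapsto \mathfrak{t}_{x, h}$ coming from the Ray-Knight construction of the TSRM in \cite{TW98}. Second, controlling $w(\mathcal{H}_n, \delta, T)$ is genuinely more delicate than the control of $w(\mathcal{X}_n, \delta, T)$, since $\mathcal{H}_n$ is neither monotone in $t$ nor a simple function of the walk's position between the discrete stopping times, and requires the strong form of the joint GRKT that provides convergence of the \emph{full} Ray-Knight curves rather than only their endpoints and merging locations.
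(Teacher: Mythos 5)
Your overall strategy --- transferring continuity of the limit $(\mathfrak{X},\mathfrak{H})$ back to the walk through a fixed finite grid of Ray--Knight indices via Theorem~\ref{thm:JRK-TSAW} and Corollary~\ref{cor:tau-lim} --- is genuinely different from the paper, which instead proves a quantitative excursion estimate: it shows that a macroscopic excursion between consecutive grid stopping times forces $\sim\varepsilon/(5\delta)$ consecutive pairs of coalescing curves not to merge, giving a bound $(1-c_0)^{\fl{\varepsilon/(5\delta)}}$ that beats the $\delta^{-3/2}$ entropy of the grid. A soft argument of your type could in principle avoid that estimate, but as written it has a genuine gap in the modulus step. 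Your condition (a) imposes an \emph{upper} bound on the gaps of the grid stopping times (maximum gap $<\delta/(2\sigma)$), while (b) controls the walk only between \emph{consecutive} stopping times. With no lower bound on the gaps, a time window of length $\delta$ can straddle arbitrarily many grid intervals, and the per-interval confinement to an interval of length $\eta'$ does not prevent the walk from drifting by nearly $\eta'$ across each of them; the oscillations add up, so the conclusion $w(\mathcal{X}_n,\delta,T)<\eta'$ simply does not follow from (a)--(c). What is needed is the opposite separation statement: after fixing the (fine) grid, choose $\delta$ \emph{smaller} than the minimum gap of the stopping times with high probability --- this is exactly the paper's Lemma~\ref{lem:gap}, exploiting a.s.\ injectivity of $(x,h)\mapsto\mathfrak{t}_{x,h}$ --- so that a $\delta$-window meets at most two consecutive grid intervals; alternatively, (b) must be required for \emph{all} pairs of grid points whose stopping times are within $3\delta$ of each other, not just consecutive ones. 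Either repair changes the quantifier structure of your argument.

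Two further points need attention even after that fix. For $\mathcal{H}_n$, two times in the same grid interval correspond to \emph{different} spatial positions of the walk, so sandwiching between the two neighbouring Ray--Knight profiles together with (c) bounds $|\mathcal{H}_n(t)-\mathcal{H}_n(s)|$ only up to the oscillation of a \emph{single} curve $\Lambda^n_{x_i,h_i}$ over the merging interval; you must also control that oscillation (in the limit this is a modulus-of-continuity estimate for the reflected/absorbed Brownian curve over a short spatial interval, which is essentially case~(i) in the paper's proof of Proposition~\ref{prop:Htight}). Finally, your events (a)--(c) are defined through the random time-ordering of the $\mathfrak{t}_{x_i,h_i}$ and through suprema of curve differences over random intervals; to transfer them from the limit to the walk via Theorem~\ref{thm:JRK-TSAW} you need to check they are continuity sets of the limiting joint law (no atoms, a.s.\ distinct stopping times, etc.), which is true here but has to be argued, as the paper does at the analogous points.
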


Proposition \ref{prop:XHtight} will be proved in Section
\ref{sec:tight} using Theorem~\ref{thm:JRK-TSAW}.  Below we will simply
give some intuition into the role that the joint GRKT plays in proving
tightness.

We first note how the joint GRKT for the TSRM $\mathfrak{X}$ can be
used to control the continuity of $\mathfrak{X}$.  If there exist
times $s,t$ with $|s-t|<\d$ and
$|\mathfrak{X}(t)-\mathfrak{X}(s)| > \e$, then there must be inverse
local times $\mathfrak{t}_{x,h}$ and $\mathfrak{t}_{x',h'}$ with
$|x-x'|>\e$ and
\[
 \d > |\mathfrak{t}_{x,h} - \mathfrak{t}_{x',h'} | 
 = \int_\R \left| \Lambda_{x,h}(y) - \Lambda_{x',h'}(y) \right| \, dy
\]
However, since $\Lambda_{x,h}(\cdot)$ and $\Lambda_{x',h'}$ are
independent coalescing reflected/absorbed Brownian motions, it is very
unlikely that the curves $\Lambda_{x,h}$ and $\Lambda_{x',h'}$ can
have a small area between them if $x$ and $x'$ are far apart (it
should be noted that the curves can coalesce only outside of the
interval $[x\wedge x', x\vee x']$).  The same ideas can be applied to
the random walk and the corresponding curves $\Lambda^n_{x,h}$ which
appear in Theorem \ref{thm:JRK-TSAW}. Combining the tightness of the
sequence of the spatial coordinates with properties of the local times
we then show the tightness of $({\cal H}_n)_{n\in\N}$ and complete
the proof of Proposition~\ref{prop:XHtight}.

\section{Proof of Theorem~\ref{thm:TSAW-FLT}}\label{Outline}

In this section, we prove Theorem \ref{thm:TSAW-FLT} using the results
stated in the previous section. As we remarked above, it suffices to
prove the following statement.

\begin{theorem}\label{thm:TSAW-FLL}
  The sequence of ${\cal D}^2_+$-valued random variables
  $(\mathcal{X}_n(\cdot),\mathcal{H}_n(\cdot))$, $n\in\N$, defined in
  \eqref{XH} % \underset{n\to\infty}{\Longrightarrow}
  converges weakly as $n\to\infty$ to 
  $(\mathfrak{X}(\cdot),\mathfrak{H}(\cdot))$.
\end{theorem}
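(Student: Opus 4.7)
The plan is to combine the tightness of $(\mathcal{X}_n,\mathcal{H}_n)$ from Proposition~\ref{prop:XHtight} with the joint convergence of rescaled inverse local times from Corollary~\ref{cor:tau-lim} to identify every subsequential weak limit with $(\mathfrak{X},\mathfrak{H})$. Fix a countable dense set $\mathcal{Q}\subset \R\times(0,\infty)$. Since convergence in the product topology on $\R^{\mathcal{Q}}$ is equivalent to convergence of every finite marginal, Corollary~\ref{cor:tau-lim} yields
\[
\bigl(\tau^n_{x,h}/(2\s n^{3/2})\bigr)_{(x,h)\in\mathcal{Q}} \Longrightarrow (\mathfrak{t}_{x,h})_{(x,h)\in\mathcal{Q}} \quad \text{in } \R^{\mathcal{Q}}.
\]
Together with Proposition~\ref{prop:XHtight}, the enlarged family is jointly tight in $\mathcal{D}^2\times\R^{\mathcal{Q}}$. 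Along any convergent subsequence, Skorokhod's representation theorem produces a common probability space on which
\[
\bigl(\mathcal{X}_n,\mathcal{H}_n,(\tau^n_{x,h}/(2\s n^{3/2}))_{(x,h)\in\mathcal{Q}}\bigr) \longrightarrow \bigl(\mathcal{X}_\infty,\mathcal{H}_\infty,(T_{x,h})_{(x,h)\in\mathcal{Q}}\bigr) \quad \text{a.s.},
\]
where $(\mathcal{X}_\infty,\mathcal{H}_\infty)$ is a.s.\ continuous and $(T_{x,h})\overset{d}{=}(\mathfrak{t}_{x,h})$.

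Next I read off the approximating processes at the rescaled inverse local times. For $(x,h)\in\mathcal{Q}$ the definition of $\tau^n_{x,h}$ gives $X_{\tau^n_{x,h}}=\fl{xn}$ and $L(\tau^n_{x,h},X_{\tau^n_{x,h}})=\fl{2\s h\sqrt{n}}+1$. Setting $s_n:=\tau^n_{x,h}/n^{3/2}=2\s\cdot\tau^n_{x,h}/(2\s n^{3/2})\to 2\s T_{x,h}$, formula~\eqref{XH} gives $\mathcal{X}_n(s_n)\to (2\s)^{2/3}x$ and $\mathcal{H}_n(s_n)\to (2\s)^{1/3}h$. Because Skorokhod $J_1$ convergence to a continuous limit coincides with uniform convergence on compacts, $\mathcal{X}_n(s_n)\to\mathcal{X}_\infty(2\s T_{x,h})$ and likewise for $\mathcal{H}_n$, so almost surely
\[
\mathcal{X}_\infty(2\s T_{x,h}) = (2\s)^{2/3} x,\qquad \mathcal{H}_\infty(2\s T_{x,h}) = (2\s)^{1/3} h \quad \text{for all } (x,h)\in\mathcal{Q}.
\]

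The TSRM construction in \cite[Section~3]{TW98} ensures that a.s.\ the set $\{\mathfrak{t}_{x,h}:(x,h)\in\mathcal{Q}\}$ is dense in $\R_+$ (the image of $(x,h)\mapsto\mathfrak{t}_{x,h}$ is dense and this map is continuous at each $(x,h)$ with $h>0$), and via $(T_{x,h})\overset{d}{=}(\mathfrak{t}_{x,h})$ the same holds for $\{2\s T_{x,h}\}$. Continuity of $(\mathcal{X}_\infty,\mathcal{H}_\infty)$ then determines the trajectories a.s.\ as a deterministic measurable map $\Psi$ of $(T_{x,h})_{(x,h)\in\mathcal{Q}}$: namely, $\mathcal{X}_\infty(t)$ is the unique continuous extension of the assignment $2\s T_{x,h}\mapsto (2\s)^{2/3}x$ to $\R_+$, and analogously for $\mathcal{H}_\infty$. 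The TSRM scaling relations $\mathfrak{X}(ct)=c^{2/3}\mathfrak{X}(t)$ and $\mathfrak{H}(ct)=c^{1/3}\mathfrak{H}(t)$, combined with $\mathfrak{X}(\mathfrak{t}_{x,h})=x$ and $\mathfrak{H}(\mathfrak{t}_{x,h})=h$, yield the identical representation $(\mathfrak{X},\mathfrak{H})=\Psi((\mathfrak{t}_{x,h})_{(x,h)\in\mathcal{Q}})$ a.s. Since the driving vectors have the same distribution, $(\mathcal{X}_\infty,\mathcal{H}_\infty)\overset{d}{=}(\mathfrak{X},\mathfrak{H})$, and as the subsequence was arbitrary, the theorem follows.

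The main obstacle is the reconstruction step of the final paragraph: establishing the almost sure density of the restricted family $\{\mathfrak{t}_{x,h}:(x,h)\in\mathcal{Q}\}$ in $\R_+$ and the measurability of the extension map $\Psi$, both of which rest on the fine structure of TSRM inverse local times developed in \cite{TW98} (in particular, a.s.\ injectivity of $(x,h)\mapsto\mathfrak{t}_{x,h}$ for points with $h>0$ and continuity from the Brownian-web construction). The remaining arguments are essentially soft: joint tightness in $\mathcal{D}^2\times\R^{\mathcal{Q}}$, Skorokhod representation, and the fact that Skorokhod convergence with a continuous limit upgrades to uniform convergence on compacts, which legitimizes evaluating the approximating paths at random times $s_n\to 2\s T_{x,h}$.
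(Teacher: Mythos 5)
Your route is genuinely different from the paper's. The paper fixes a subsequential limit $(Y,Z)$, uses Theorem~\ref{th:JWLGST} (itself built on Corollary~\ref{cor:tau-lim}, Scheff\'e's lemma and the occupation-time formula \eqref{otf}) to match the laws of $(Y,Z)$ and $(\mathfrak{X},\mathfrak{H})$ sampled at independent exponential times, and then inverts multidimensional Laplace transforms to get equality of finite-dimensional distributions. You instead couple, via Skorokhod representation, a subsequential limit with the limiting family of rescaled inverse local times, observe that the limit path interpolates the values $(2\s)^{2/3}x$ and $(2\s)^{1/3}h$ at the times $2\s T_{x,h}$, and reconstruct the law of the limit from the law of $(\mathfrak{t}_{x,h})_{(x,h)\in\mathcal{Q}}$ through density of these times and path continuity. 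Your approach dispenses with the local-limit/density computation of Theorem~\ref{th:JWLGST} and is closer in spirit to ``inverting the joint GRKT'' directly; the paper's approach avoids any fine structure of the family $(\mathfrak{t}_{x,h})$ beyond Corollary~\ref{cor:tau-lim}, at the cost of the explicit density identification at exponential times.

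Two steps need repair, though both are fixable. First, your justification of the density of $\{\mathfrak{t}_{x,h}:(x,h)\in\mathcal{Q}\}$ is not right: the map $(x,h)\mapsto\mathfrak{t}_{x,h}$ is \emph{not} continuous (for fixed $x$ it is the right-continuous inverse of $t\mapsto\mathfrak{L}(t,x)$ and jumps across every excursion interval away from $x$), and even almost sure continuity at each \emph{fixed} point of $\mathcal{Q}$ would not transfer density from the image of the full map to the image of $\mathcal{Q}$. The density itself is true and can be obtained from the properties already quoted in Section~\ref{Prelim}: for $0\le a<b$, the set $\{(x,h):\,\mathfrak{L}(a,x)<h<\mathfrak{L}(b,x)\}$ is open and, by \eqref{otf} with $\phi\equiv 1$, has Lebesgue measure $b-a>0$, hence meets $\mathcal{Q}$; any $(x,h)$ in it satisfies $\mathfrak{t}_{x,h}\in[a,b]$. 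Since density of $\{2\s T_{x,h}\}$ is an event measurable with respect to the countable family, it transfers to $(T_{x,h})$ by equality in law. Second, the asserted identity $(\mathfrak{X},\mathfrak{H})=\Psi((\mathfrak{t}_{x,h})_{(x,h)\in\mathcal{Q}})$ a.s.\ is false as written: with your definition of $\Psi$ one gets, almost surely, $\Psi((\mathfrak{t}_{x,h}))=\bigl((2\s)^{2/3}\mathfrak{X}(\cdot/(2\s)),\,(2\s)^{1/3}\mathfrak{H}(\cdot/(2\s))\bigr)$, which equals $(\mathfrak{X},\mathfrak{H})$ only in distribution, and this requires the \emph{joint} scaling of the pair, not just \eqref{TSRM-scaling} for $\mathfrak{X}$ alone. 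That joint scaling does follow from \eqref{TSRM-scaling}, because $\mathfrak{H}(t)=\mathfrak{L}(t,\mathfrak{X}(t))$ is a measurable functional of the path of $\mathfrak{X}$ through its occupation density (so a distributional identity for $\mathfrak{X}$ propagates to the pair), but you must say this; the scaling relations should in any case be stated distributionally, not pathwise. With these two points repaired (and a routine measurable construction of $\Psi$, e.g.\ via limits along the dense time set), your argument gives the same conclusion as the paper's proof.
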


Indeed, let $k_n=\fl{n^{2/3}}$ and $t_n=tn/k^{3/2}_n$. Writing
\begin{equation*}
  \left(\frac{X_{\fl{tn}}}{(2\sigma)^{-2/3}n^{2/3}},\frac{L\left(\fl{tn}, X_{\fl{tn}}\right)}{(2\sigma)^{2/3} n^{1/3}}\right)=% \left(\frac{k_n}{n^{3/2}}\frac{X_{\fl{t_nk^{3/2}_n} }}{(2\sigma)^{-2/3}k_n},\frac{k_n^{1/2}}{n^{1/3}}\frac{L\left(\fl{t_nk^{3/2}_n}, X_{\fl{t_nk^{3/2}_n}}\right)}{(2\sigma)^{2/3} k_n^{1/2}}\right)\\
  \left(\frac{k_n}{n^{2/3}}{\cal X}_{k_n}(t_n),\frac{k_n^{1/2}}{n^{1/3}}{\cal H}_{k_n}(t_n)\right)
\end{equation*}
and using the fact that $k_n/n^{2/3}\to 1$ and $0\le t_n-t\to 0$ as
$n\to \infty$ locally uniformly in $t\ge 0$, we see that for every
$T>0$ the Skorokhod distance between the last process and
$({\cal X}_{k_n}(t),{\cal H}_{k_n}(t))$ 
on the interval $[0,T]$
goes to $0$ as
$n\to\infty$. We conclude that Theorem~\ref{thm:TSAW-FLL} implies
Theorem~\ref{thm:TSAW-FLT}.

\subsection{Proof of Theorem \ref{thm:TSAW-FLL}}
The first step in the proof of Theorem \ref{thm:TSAW-FLL} is to show
the convergence of finite dimensional distributions for the processes
$(\mathcal{X}_n(\cdot),{\cal H}_n(\cdot))_{n\in\N}$ sampled not at
deterministic times but at independent exponential times.  Without
loss of generality we expand the probability spaces of the random walk
and the process $\mathfrak{X}$ to include the exponential random
variables that are also independent of both the TSAW and the process
$\mathfrak{X}$.

\begin{theorem}[Joint weak limit at independent geometric stopping times]\label{th:JWLGST}
  For any choice of parameters $\l_1,\l_2,\ldots,\l_k > 0$, let
  $\gamma_1, \gamma_2,\ldots,\gamma_k$ be independent random variables
  with $\gamma_i \sim \text{Exp}(\l_i)$ that are also independent of
  both the TSAW and the true self-repelling motion $\mathfrak{X}$.
  Then,
\[\left( \mathcal{X}_n(\gamma_i),  \mathcal{H}_n(\gamma_i) \right)_{1\leq i \leq k}\ 
 \underset{n\to\infty}{\Longrightarrow} 
\left( \mathfrak{X}(\gamma_i), \mathfrak{H}(\gamma_i) \right)_{1\leq i \leq k}.
 % 
 %  \left( \mathcal{X}_n(\gamma_1), 
 %  \mathcal{X}_n(\gamma_2),\ldots,
 %  \mathcal{X}_n(\gamma_k) \right)
 %  \underset{n\to\infty}{\Longrightarrow}  \left( \mathfrak{X}(\gamma_1), \mathfrak{X}(\gamma_2), \ldots, \mathfrak{X}(\gamma_k) \right). 
\]
\end{theorem}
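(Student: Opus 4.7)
The plan is to characterize the joint law of the prelimit $(\mathcal{X}_n(\gamma_i), \mathcal{H}_n(\gamma_i))_{i\leq k}$ and of the limit $(\mathfrak{X}(\gamma_i), \mathfrak{H}(\gamma_i))_{i\leq k}$ by parallel multilinear integral formulas in terms of the inverse local times $\tau^n_{x_i,h_i}$ and $\mathfrak{t}_{y_i,z_i}$ respectively, and then to pass from one formula to the other via Corollary~\ref{cor:tau-lim}. Since the $k=1$ case is already known from \cite[(6.6)]{tTSAW}, each one-dimensional marginal is tight, so the joint laws are tight for every $k$; hence it suffices to prove that $\E\prod_i \phi_i(\mathcal{X}_n(\gamma_i),\mathcal{H}_n(\gamma_i))\to \E\prod_i \phi_i(\mathfrak{X}(\gamma_i),\mathfrak{H}(\gamma_i))$ for every collection of bounded continuous compactly supported $\phi_i:\R\times\R_+\to\R$.

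For the TSRM side I would apply the one-variable occupation-time formula
\[
\int_0^\infty e^{-\lambda t}\psi(\mathfrak{X}(t),\mathfrak{H}(t))\,dt = \int_\R \int_0^\infty e^{-\lambda \mathfrak{t}_{y,z}}\psi(y,z)\,dz\,dy \quad \text{a.s.,}
\]
which follows from \cite[Theorem 4.2(ii)]{TW98} by taking $g(t,y)=e^{-\lambda t}\psi(y,\mathfrak{L}(t,y))$ and changing variables $z=\mathfrak{L}(t,y)$, exactly as in the derivation of \eqref{otf}. Combined with the independence of the $\gamma_i$'s and Fubini, this yields
\[
\E\prod_i \phi_i(\mathfrak{X}(\gamma_i),\mathfrak{H}(\gamma_i)) = \prod_i \lambda_i\,\int_{\R^k\times\R_+^k}\E\Bigl[\prod_i e^{-\lambda_i \mathfrak{t}_{y_i,z_i}}\Bigr]\prod_i \phi_i(y_i,z_i)\,dy_i\,dz_i.
\]
On the TSAW side the same strategy works. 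After replacing each $\gamma_i$ by its exponential density and discretizing the time integral using that $(\mathcal{X}_n,\mathcal{H}_n)$ is constant on each interval $[s/n^{3/2},(s+1)/n^{3/2})$, I would recall that every $s\ge 0$ corresponds bijectively to a pair $(k,m)\in\Z\times\Z_+$ via $s=\tau_{k,m}$. Rewriting the resulting $k$-fold discrete sum exactly as an integral by setting $k_i=\lfloor x_i n\rfloor$, $m_i=\lfloor 2\sigma h_i\sqrt{n}\rfloor$ and using the definition~\eqref{tnxh} gives
\[
\E\prod_i \phi_i(\mathcal{X}_n(\gamma_i),\mathcal{H}_n(\gamma_i)) = \prod_i(1-e^{-\lambda_i/n^{3/2}})\,(2\sigma)^k n^{3k/2}\int \E\Bigl[\prod_i e^{-\lambda_i \tau^n_{x_i,h_i}/n^{3/2}}\Bigr]\prod_i \phi_i^n(x_i,h_i)\prod_i dx_i\,dh_i,
\]
where $\phi_i^n(x,h)\to \phi_i((2\sigma)^{2/3}x,(2\sigma)^{1/3}h)$ pointwise in $(x,h)$ as $n\to\infty$.

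To pass to the limit, the prefactor $\prod_i(1-e^{-\lambda_i/n^{3/2}})\,n^{3k/2}$ tends to $\prod_i\lambda_i$, while Corollary~\ref{cor:tau-lim} gives the joint convergence $(\tau^n_{x_i,h_i}/(2\sigma n^{3/2}))_i\Rightarrow (\mathfrak{t}_{x_i,h_i})_i$ for each fixed tuple $(x_i,h_i)_i$, hence the pointwise convergence $\E[\prod_i e^{-\lambda_i\tau^n_{x_i,h_i}/n^{3/2}}]\to \E[\prod_i e^{-2\sigma\lambda_i \mathfrak{t}_{x_i,h_i}}]$ of the Laplace transform in the integrand. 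Because each Laplace transform is bounded by $1$ and the $\phi^n_i$ are uniformly bounded and, uniformly in $n$, supported in a fixed compact set, dominated convergence lets me pull the limit inside the integral. A change of variables $y_i=(2\sigma)^{2/3}x_i$, $z_i=(2\sigma)^{1/3}h_i$ together with the joint scaling identity $\mathfrak{t}_{y,z}\stackrel{\text{Law}}{=} 2\sigma\,\mathfrak{t}_{y/(2\sigma)^{2/3},\,z/(2\sigma)^{1/3}}$, which follows from \eqref{TSRM-scaling} applied to the whole pair $(\mathfrak{X},\mathfrak{H})$, identifies the resulting expression with the TSRM integral formula above.

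The main substantive input is Corollary~\ref{cor:tau-lim}, which in turn rests on the joint GRKT of Theorem~\ref{thm:JRK-TSAW}; once that is available, everything else is bookkeeping. The only mildly delicate point I foresee is keeping track of the multiplicative constants $(2\sigma)^k$, $(2\sigma)^{-k}$, and $2\sigma$ that appear respectively in the discrete Riemann measure, in the Jacobian of the change of variables, and in the TSRM scaling, and showing that they conspire to cancel exactly so that the two integral representations genuinely match.
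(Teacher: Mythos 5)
Your proposal is correct and follows essentially the same route as the paper: both hinge on conditioning on the geometric time variables to turn the prelimit law into Laplace transforms of the inverse local times, on Corollary~\ref{cor:tau-lim}, on the occupation-time formula to identify the TSRM side, and on the scaling of $\mathfrak{t}_{x,h}$ to match constants; the only packaging difference is that you integrate against compactly supported product test functions and use dominated convergence (plus a tightness/vague-to-weak upgrade), whereas the paper proves pointwise convergence of the lattice ``densities'' and invokes Scheff\'e's lemma. One small attribution to fix: the joint identity $(\mathfrak{t}_{c y_i,\sqrt{c}\,z_i})_{i\le k}\overset{\text{Law}}{=}(c^{3/2}\,\mathfrak{t}_{y_i,z_i})_{i\le k}$ does not follow from \eqref{TSRM-scaling}, which concerns $\mathfrak{X}$ alone; it should be derived, as the paper does, from \eqref{txh-integral} together with the fact that the Ray-Knight curves are independent coalescing reflected/absorbed Brownian motions and Brownian scaling.
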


\begin{remark}
%  The proof of Theorem \ref{th:JWLGST} is similar to the proof of
%  Theorem 3 in \cite{tTSAW}. Instead of \cite[Corollary 2]{tTSAW} we
%  use Corollary~\ref{cor:tau-lim}. 
  The proof of Theorem \ref{th:JWLGST} is essentially a multi-point extension of the argument in the proof of Theorem 3 in \cite{tTSAW} where we use Corollary~\ref{cor:tau-lim} instead of \cite[Corollary 2]{tTSAW}. 
  Thanks to \cite{TW98} and
  \eqref{otf}, we can identify the density of the limiting random
  vector as that of $(\mathfrak{X}, \mathfrak{H})$ stopped at random
  exponential times (recall that \cite{tTSAW} preceded the
  construction of the TSRM done in \cite{TW98}). The convergence
  result for $k=1$ is contained already in \cite{tTSAW}, even though
  it is not stated explicitly. The author establishes a local limit
  theorem for the pair
  $(n^{-2/3}X_{\fl{\gamma_1 n}}, n^{-1/3}H_{\fl{\gamma_1 n}})_{n\ge
    1}$ (see \cite[(6.6)]{tTSAW}) from which the convergence for $k=1$
  readily follows. 
  A local limit theorem for
  $(n^{-2/3}X_{\fl{\gamma_1 n}})_{n\ge 1}$ can also be obtained by
  providing appropriate uniform tail estimates and an application of
  the dominated convergence theorem. Since we do not need this result
  we omit the details. We remark that the proof of the local limit theorem
  for $(n^{-2/3}X_{\fl{\gamma_1 n}})_{n\ge 1}$ given in \cite{tTSAW}
  which uses only Fatou's lemma (\cite[p.\,1555]{tTSAW}) appears to be
  incomplete as only a liminf rather than the claimed limit is proved.
\end{remark}

\begin{proof}
Let $H_n=L(n,X_n)$ and recall \eqref{XH}. We shall first prove that
\begin{equation}\label{XgHg-local}
\begin{split}
 \lim_{n\to\infty} &n^{3k/2} P\left( X_{\fl{\gamma_i n^{3/2}}} = \fl{(2\s)^{-2/3} a_i n}, \, H_{\fl{\gamma_i n^{3/2}}} = \fl{(2\s)^{2/3} h_i \sqrt{n}} + 1, \, \forall i\leq k \right) \\
 &= E\left[ \prod_{i=1}^k \l_i e^{-\l_i \mathfrak{t}_{a_i,h_i}} \right],
 \qquad \forall a_1,a_2,\ldots,a_k \in \R, \, \forall h_1,h_2,\ldots,h_k\geq 0, 
\end{split}
\end{equation}
and then show that the right-hand side is the joint density of
$\left( \mathfrak{X}(\gamma_i), \mathfrak{H}(\gamma_i)
\right)$, $1\leq i \leq k$. Indeed, we can interpret the expression
under the limit as a density of a $2k$-dimensional random vector with
a piecewise constant density, so that the above convergence of
densities by Scheffe's lemma gives us the weak convergence of this
sequence of $2k$-dimensional random vectors to
$\left( \mathfrak{X}(\gamma_i), \mathfrak{H}(\gamma_i) 
\right)$, $1\leq i \leq k$. The statement of the theorem follows by
noticing that the random variables $\left( \mathcal{X}_n(\gamma_i), \mathcal{H}_n(\gamma_i) \right)$, $i\leq k$ can be coupled with joint random variables whose density is given by the left side of \eqref{XgHg-local} in such a way that the maximum difference of any coordinate is $\mathcal{O}(n^{-1/2})$.

To prove \eqref{XgHg-local}, note that $\fl{\gamma_i n^{3/2}} \sim \text{Geo}(1-e^{-\l_i n^{-3/2}})$ so that by conditioning on the values of these independent geometric random variables we get 
\begin{align*}
&P\left( X_{\fl{\gamma_i n^{3/2}}} = \flo{\tfrac{a_i n}{(2\sigma)^{2/3}}}, H_{\fl{\gamma_i n^{3/2}}} = \fl{(2\s)^{2/3} h_i \sqrt{n}} + 1, \, \forall i\leq k \right) \\
&= \sum_{j_1,j_2,\ldots,j_k \geq 0}
\left( \prod_{i=1}^k (1-e^{-\l_i n^{-3/2}})\,e^{-\l_i n^{-3/2}j_i } \right)\\ &\makebox[4.7cm]{\ } \times P\left( X_{j_i} = \flo{\tfrac{a_i n}{(2\sigma)^{2/3}}}, H_{j_i} = \fl{(2\s)^{2/3} h_i \sqrt{n} }+1, \, \forall i\leq k \right)\\
&= \sum_{j_1,\ldots,j_k \geq 0}
\left( \prod_{i=1}^k (1-e^{-\l_i n^{-3/2}})\,e^{-\l_i n^{-3/2}j_i} \right) P\left( \tau_{\fl{(2\s)^{-2/3} a_i n}, \fl{(2\s)^{2/3} h_i \sqrt{n}}} = j_i , \, \forall i\leq k \right) \\
&= \prod_{i=1}^k (1-e^{-\l_i n^{-3/2}}) E\left[ \prod_{i=1}^k \exp\left\{ -\l_i n^{-3/2}\tau_{\fl{(2\s)^{-2/3} a_i n}, \fl{(2\s)^{2/3} h_i \sqrt{n}}}  \right\} \right]. 
\end{align*}
Therefore, it follows from Corollary \ref{cor:tau-lim} that 
\begin{align*}
 &\lim_{n\to\infty} n^{3k/2} P\left( X_{\fl{\gamma_i n^{3/2}}} = \fl{(2\s)^{-2/3} a_i n}, \, H_{\fl{\gamma_i n^{3/2}}} = \fl{(2\s)^{2/3} h_i \sqrt{n}}+1, \, \forall i\leq k \right) \\
 &= \lim_{n\to\infty} \left( \prod_{i=1}^k n^{3/2}(1-e^{-\l_i n^{-3/2}}) \right)  E\left[ \prod_{i=1}^k \exp\left\{ -\l_i n^{-3/2} \tau_{\fl{(2\s)^{-2/3} a_i n}, \fl{(2\s)^{2/3} h_i \sqrt{n}}}  \right\} \right] \\
 &= \left( \prod_{i=1}^k \l_i \right) E\left[ \prod_{i=1}^k \exp\left\{  -2\s \l_i \mathfrak{t}_{(2\s)^{-2/3}a_i, (2\s)^{-1/3}h_i} \right\} \right]= \left( \prod_{i=1}^k \l_i \right) E\left[ \prod_{i=1}^k e^{ - \l_i \mathfrak{t}_{a_i, h_i}} \right], 
\end{align*}
where in the last equality we used that
$( \mathfrak{t}_{c a_i, \sqrt{c} h_i} )_{i\leq k}
\overset{\text{Law}}{=} ( c^{3/2}\, \mathfrak{t}_{a_i,h_i} )_{i\leq
  k}$ for any $c>0$, which follows from \eqref{txh-integral}, the fact
that the curves $(\Lambda_{a_i,h_i}(\cdot))_{i\leq k}$ are independent
coalescing reflected/absorbed Brownian motions, and then the scaling
property of Brownian motion.

We have completed the proof of \eqref{XgHg-local}, and now it remains to show that \linebreak
$((a_1,h_1),\ldots,(a_k,h_k)) \mapsto E\left[ \prod_{i=1}^k \l_i e^{-\l_i \mathfrak{t}_{a_i,h_i}} \right]$ is the joint density for 
$\left( \mathfrak{X}(\gamma_i), \mathfrak{H}(\gamma_i)
\right)$, $1\leq i \leq k$.
To this end, fix a bounded continuous function $\phi: \R^k \times \R_+^k \to \R $ and note that (using the notation $\mathbf{a}=(a_1,\ldots,a_k)$,  $\mathbf{h} = (h_1,\ldots,h_k)$,  $\mathbf{da} = da_1\cdots da_k$ and $\mathbf{dh} = dh_1 \cdots dh_k$)
\begin{align*}
& \int_{\R^k} \int_{\R_+^k} \phi(\mathbf{a},\mathbf{h}) E\left[ \prod_{i=1}^k \l_i e^{-\l_i \mathfrak{t}_{a_i,h_i}} \right] \mathbf{dh} \, \mathbf{da}\\
&= \left( \prod_{i=1}^k \l_i \right) \int_{\R^k} \int_{\R_+^k} \phi(\mathbf{a},\mathbf{h}) P\left( \gamma_i > \mathfrak{t}_{a_i,h_i}, \, \forall i\leq k \right) \mathbf{dh} \, \mathbf{da} \\
&= \left( \prod_{i=1}^k \l_i \right) \int_{\R^k} \int_{\R_+^k} \phi(\mathbf{a},\mathbf{h})  P\left( \mathfrak{L}(\gamma_i,a_i) > h_i, \, \forall i\leq k \right) \mathbf{dh} \, \mathbf{da} \\
&= \left( \prod_{i=1}^k \l_i \right) \int_{\R^k} \int_{\R_+^k} \int_{\R_+^k} \left( \prod_{i=1}^k \l_i e^{-\l_i s_i} \right) \phi(\mathbf{a},\mathbf{h})  P\left( \mathfrak{L}(s_i,a_i) > h_i, \, \forall i\leq k \right) \mathbf{ds} \, \mathbf{dh} \, \mathbf{da} \\
&=  \int_{\R_+^k} \left( \prod_{i=1}^k \l_i^2 e^{-\l_i s_i} \right)  E\left[ \int_{\R^k} \int_{\R_+^k} \phi(\mathbf{a}, \mathbf{h}) \ind{h_i < \mathfrak{L}(s_i,a_i), \, \forall i \leq k} \,  \mathbf{dh} \, \mathbf{da} \right]   \mathbf{ds} \\
&= \int_{\R_+^k} \left( \prod_{i=1}^k \l_i^2 e^{-\l_i s_i} \right)  E\left[ \int_{\R^k} \int_{\prod_{i=1}^k [0,\mathfrak{L}(s_i,a_i))} \phi(\mathbf{a}, \mathbf{h}) \,  \mathbf{dh} \, \mathbf{da} \right]   \mathbf{ds}.
\end{align*}
By the generalization of the occupation time formula, see \eqref{otf}, 
\begin{equation*}\label{intLtHt}
 \int_{\R^k} \int_{\prod_{i=1}^k [0,\mathfrak{L}(s_i,a_i))}
 \phi(\mathbf{a}, \mathbf{h}) \, \mathbf{dh} \, \mathbf{da}
 = \int_{\prod_{i=1}^k [0,s_i] } \phi\left( \mathfrak{X}(t_1),\ldots, \mathfrak{X}(t_k), \mathfrak{H}(t_1),\ldots,\mathfrak{H}(t_k) \right) \, \mathbf{dt}.
\end{equation*}
Hence,
\begin{align*}
 & \int_{\R^k} \int_{\R_+^k} \phi(\mathbf{a},\mathbf{h}) E\left[ \prod_{i=1}^k \l_i e^{-\l_i \mathfrak{t}_{a_i,h_i}} \right] \mathbf{dh} \, \mathbf{da}\\
 &=\int_{\R_+^k} \left( \prod_{i=1}^k \l_i^2 e^{-\l_i s_i} \right) E\left[ \int_{ \prod_{i=1}^k [0,s_i] } \phi\left( \mathfrak{X}(t_1),\ldots, \mathfrak{X}(t_k), \mathfrak{H}(t_1),\ldots,\mathfrak{H}(t_k) \right) \, \mathbf{dt}  \right] \, \mathbf{ds} \\
 &= E\left[ \int_{\R_+^k}  \phi\left( \mathfrak{X}(t_1),\ldots, \mathfrak{X}(t_k), \mathfrak{H}(t_1),\ldots,\mathfrak{H}(t_k) \right) \left( \prod_{i=1}^k \int_{t_i}^\infty \l_i^2 e^{-\l_i s_i} \, ds_i \right) \mathbf{dt} \right] \\
 &= E\left[ \int_{\R_+^k}  \phi\left( \mathfrak{X}(t_1),\ldots, \mathfrak{X}(t_k), \mathfrak{H}(t_1),\ldots,\mathfrak{H}(t_k) \right) \left( \prod_{i=1}^k \l_i e^{-\l_i t_i} \right) \mathbf{dt} \right]\\
 &= E\left[ \phi\left( \mathfrak{X}(\gamma_1),\ldots, \mathfrak{X}(\gamma_k), \mathfrak{H}(\gamma_1),\ldots,\mathfrak{H}(\gamma_k) \right) \right].
\end{align*}
This completes the proof of the theorem.
\end{proof}
We are now ready to prove Theorem \ref{thm:TSAW-FLL}.
\begin{proof}[Proof of Theorem \ref{thm:TSAW-FLL}]
  Suppose that
  $(\mathcal{X}_{n_j}(\cdot),\mathcal{H}_{n_j}(\cdot)) \underset{j\to
    \infty}{\Longrightarrow} (Y(\cdot),Z(\cdot))$ along a subsequence
  $(n_j)_{j\geq 1}$. By Proposition \ref{prop:XHtight} the process
  $(Y(\cdot),Z(\cdot))$ must be continuous, and it will be enough to
  show that $(Y(\cdot),Z(\cdot))$ has the same finite dimensional
  distributions as $(\mathfrak{X}(\cdot),\mathfrak{H}(\cdot))$.  To
  this end, note that it follows from Theorem \ref{th:JWLGST} and the
  convergence assumption that for any bounded continuous function
  $\phi:\R^k\times\R_+^k \to \R$, any $\l_1,\l_2,\ldots, \l_k > 0$,
  and independent random variables
  $\gamma_1, \gamma_2,\ldots,\gamma_k$ with
  $\gamma_i\sim \text{Exp}(\lambda_i)$, $1\le i\le k$, which are also
  independent from $(Y(\cdot),Z(\cdot))$ and
  $(\mathfrak{X}(\cdot),\mathfrak{H}(\cdot))$,
\[E\left[ \phi\left( Y(\gamma_1), \ldots, Y(\gamma_k), Z(\gamma_1), \ldots, Z(\gamma_k)\right) \right]=E\left[ \phi\left( \mathfrak{X}(\gamma_1), \ldots, \mathfrak{X}(\gamma_k) , \mathfrak{H}(\gamma_1), \ldots, \mathfrak{H}(\gamma_k) \right)  \right],  
\]
or, equivalently, $\forall \l_1,\l_2,\ldots,\l_k > 0$
\begin{align*}
 &\int_{[0,\infty)^k} \left( \prod_{i=1}^k \l_i e^{-\l_i t_i} \right) E\left[ \phi\left( \mathfrak{X}(t_1), \ldots, \mathfrak{X}(t_k), \mathfrak{H}(t_1), \ldots, \mathfrak{H}(t_k)\right) \right] \, dt_1 \ldots dt_k \\
 &\qquad = \int_{[0,\infty)^k} \left( \prod_{i=1}^k \l_i e^{-\l_i t_i} \right) E\left[ \phi\left( Y(t_1), \ldots, Y(t_k),Z(t_1), \ldots, Z(t_k) \right) \right] \, dt_1 \ldots dt_k.
\end{align*}
That is, for any bounded continuous function $\phi$, the functions
\begin{align*}
  (t_1,\ldots,t_k)&\mapsto E\left[ \phi\left( \mathfrak{X}(t_1),\ldots,\mathfrak{X}(t_k),\mathfrak{H}(t_1),\ldots,\mathfrak{H}(t_k) \right) \right]\quad \text{and}
                 \\  
(t_1,\ldots,t_k)&\mapsto E\left[ \phi\left(Y(t_1),\ldots,Y(t_k),Z(t_1),\ldots,Z(t_k) \right) \right]
\end{align*}
(which are also bounded and continuous) have the same $k$-dimensional Laplace transforms and are thus equal for all $(t_1,\ldots,t_k) \in [0,\infty)^k$. This is enough to conclude that $(Y(\cdot),Z(\cdot))$ has the same finite dimensional distributions as $(\mathfrak{X}(\cdot),\mathfrak{H}(\cdot))$, and since this is true for any subsequential limit $(Y(\cdot),Z(\cdot))$ of $(\mathcal{X}_n(\cdot),\mathcal{H}_n(\cdot))$ we can conclude that indeed $(\mathcal{X}_n(\cdot),\mathcal{H}_n(\cdot)) \underset{n\to\infty}{\Longrightarrow} (\mathfrak{X}(\cdot),\mathfrak{H}(\cdot))$. 
\end{proof}

\section{Joint generalized Ray-Knight Theorems (GRKTs)}\label{sec:JRK}

In this section we will prove Theorem \ref{thm:JRK-TSAW}.  Before
beginning the proof, we note that this is essentially a
``multi-curve'' extension of the Ray-Knight type theorems proved for
the TSAW  in \cite[Theorem 1]{tTSAW} and a strengthening of
the joint GRKT stated but not proved in \cite{TW98}.

We stated the joint GRKT for the rescaled local time processes so
that it is convenient for our applications. In fact, it is the
directed edge local times processes
${\cal E}^+_{\tau_{k,m}}(\ell),\ \ell\ge k$, that we are going to use
to prove the joint GRKT, since they are Markov chains whose
probability to jump from $n$ to $n+x$, $x\in\Z$, is very close for
$n\gg 1$ to a symmetric distribution $(\pi(x))_{x\in\Z}$ with Gaussian
tails, more precisely, $\pi(x)=C\lambda^{x^2}$ for some $C>0$.
The transition probabilities of these Markov chains can be
conveniently written in terms of generalized Polya's urn processes for
which $\pi$ is the invariant distribution. The parameter $\sigma$ in
\eqref{sigma} is simply the standard deviation of $\pi$. We refer the
reader to Appendix~\ref{sec:urn} for a careful account of the above
mentioned connections and related facts.

The following proposition is all we need from Appendix~\ref{sec:urn}
to proceed with the proof of Theorem~\ref{thm:JRK-TSAW}. It says that
if we consider the joint distribution of the processes
$\left( \mathcal{E}^+_{\tau_{k,m_1}}, % \mathcal{E}^+_{\tau_{k,m_2}},
  \ldots, \mathcal{E}^+_{\tau_{k,m_N}} \right)$, then as long as the
processes remain far apart, their increments are approximately
distributed like the i.i.d.\ copies from distribution $\pi$.  The
exponential bound on the total variation distance given below will
ensure that the coupling of the increments of these processes to the
i.i.d.\ copies of $\pi$ will not break down for the required length of
time with probability tending to 1.
% In fact, the result below quantifies this in terms of an exponential
% bound on the total variation distance.

\begin{proposition}\label{coup}
 There are constants $C_2,C_3>0$ such that the following holds. For any fixed $k \in \Z$, $0\leq m_1 < m_2 < \cdots < m_N$, $\ell \neq 0$ and any $0\leq n_1 < n_2 < \cdots < n_N$ (if $\ell > 0$ then we need $n_1 > 0$) we have 
 \begin{align*}
 &\sum_{k_1,k_2,\ldots,k_N \in \Z}
 \left|  P\left( \mathcal{E}^+_{\tau_{k,m_i}}(\ell) 
 = n_i+k_i, \, i\leq N \mid \mathcal{E}^+_{\tau_{k,m_i}}(\ell-1) = n_i, \, i\leq N \right) - \prod_{i=1}^N \pi(k_i) \right| \\
&\qquad \leq C_3 \sum_{i=1}^N e^{-C_2(n_i-n_{i-1})}. 
\end{align*}
 \end{proposition}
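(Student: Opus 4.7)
The plan is to combine the generalized P\'olya urn description of the edge--local--time processes from Appendix~\ref{sec:urn} with a standard exponential mixing and maximal coupling argument. Fix $\ell$ and condition on $\mathcal{E}^+_{\tau_{k,m_i}}(\ell-1) = n_i$ for all $i \leq N$. By Appendix~\ref{sec:urn} there is an auxiliary Markov chain $(Z_j)_{j \geq 0}$ on $\Z$ with invariant distribution $\pi$ such that, on this conditional event, the joint law of the increments $k_i := \mathcal{E}^+_{\tau_{k,m_i}}(\ell) - n_i$ coincides with the joint law of $(Z_{n_1}, Z_{n_2}, \ldots, Z_{n_N})$ for a deterministic, boundary-dependent initial state $Z_0$. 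The hypothesis $n_1 > 0$ when $\ell > 0$ guarantees that the urn has already taken at least one step before the first observation, while $\ell \neq 0$ excludes the walk's starting site, where the dynamics differ.

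Next, I would establish geometric ergodicity for $Z$: there exist constants $C_4, C_2 > 0$ such that
\begin{equation*}
 \| P(Z_j \in \cdot \mid Z_0 = z) - \pi \|_{TV} \leq C_4 e^{-C_2 j}, \qquad \forall z \in \Z,\ j \geq 1.
\end{equation*}
Since $\pi(x) \propto \lambda^{x^2}$ has Gaussian-type tails and the one-step transition kernel of $Z$ is, by direct inspection, uniformly minorized by a positive multiple of $\pi$ on a fixed window around the origin, a Doeblin-type argument yields such a uniform rate.

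Then I would construct an iterative maximal coupling. Let $(Y_i)_{i \leq N}$ be i.i.d.\ $\pi$-distributed, and set $n_0 := 0$. Inductively in $i$, couple $Z_{n_i}$ to a freshly sampled $Y_i \sim \pi$ using the maximal coupling of their laws; by the mixing bound just established, the conditional failure probability given the history through stage $i-1$ is at most $C_4 e^{-C_2(n_i - n_{i-1})}$. A union bound yields
\begin{equation*}
 P\bigl((Z_{n_i})_{i \leq N} \neq (Y_i)_{i \leq N}\bigr) \leq C_4 \sum_{i=1}^N e^{-C_2(n_i - n_{i-1})},
\end{equation*}
which translates directly into the claimed $\ell^1$ estimate on the joint probability mass functions with $C_3 = 2 C_4$.

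The main obstacle is the first step: verifying that the joint dependence across the $N$ snapshots at a single site $\ell$ can be encoded by one Markov chain $Z$ sampled at the ordered times $n_1 < \cdots < n_N$. This hinges on the key feature of the P\'olya urn construction, namely that the additional crossings of the edge $\{\ell-1, \ell\}$ occurring between consecutive snapshots are driven by conditionally independent urn draws, so that the urn's state at time $n_i$ is a deterministic function of its state at time $n_{i-1}$ together with fresh independent randomness. Once this reduction is in place, the Doeblin minorization and the maximal coupling are routine.
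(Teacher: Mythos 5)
Your first step --- encoding the joint law of the $N$ increments at site $\ell$ as one Markov chain sampled at the ordered times $n_1<\cdots<n_N$ --- is fine and is exactly the content of Lemma~\ref{lem:blpmulti-tp}: the chain is the urn discrepancy observed at blue-draw times, $Z_j=\mathfrak{D}_{\beta_j}$ (with $\widetilde{\mathfrak{D}}$ excluded since $\ell\neq 0$), started from $0$ or $1$ according to the sign of $\ell$. The genuine gap is in your mixing estimate. The bound $\|P(Z_j\in\cdot\mid Z_0=z)-\pi\|_{TV}\le C_4e^{-C_2 j}$ \emph{uniformly in $z\in\Z$} is false for this chain, and the proposed Doeblin minorization cannot hold: between consecutive blue draws the discrepancy can only increase, so $\mathfrak{D}_{\beta_1}\ge \mathfrak{D}_0-1$, i.e.\ the chain decreases by at most one unit per step. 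Consequently, from $z=100$ the one-step law is supported on $\{99,100,101,\dots\}$ and assigns no mass to any fixed window around the origin, and more generally, starting from $z>j$ the chain cannot reach the bulk of $\pi$ in $j$ steps (indeed from large positive $z$ a blue draw occurs almost immediately, so the chain marches down essentially one unit per step), so the total variation distance after $j$ steps started from $z\approx 2j$ is close to $1$, not $e^{-C_2 j}$. Since your iterative maximal coupling and the union bound rest entirely on this uniform rate, the argument as written does not go through.

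The correct replacement, which is what the paper does, is geometric ergodicity with a \emph{state-dependent} constant: $\sum_k|P_z(\mathfrak{D}_{\beta_n}=k)-\pi(k)|\le M(z)e^{-C_2 n}$ together with the crucial integrability $\overline M=\sum_z\pi(z)M(z)<\infty$. These follow from the fact, established in the proof of \cite[Lemma 1]{tTSAW} (see (3.13) there), that the renewal times of $(\mathfrak{D}_{\beta_n})$ have exponential moments, combined with \cite[Theorem 6.14 and Example 5.5(a)]{Num84}. One then runs the induction on $N$ (morally your iterative coupling), where at stage $i$ the conditional error is $M(Z_{n_{i-1}})e^{-C_2(n_i-n_{i-1})}$ and the expectation of $M(Z_{n_{i-1}})$ is controlled via $\overline M$ because the law of $Z_{n_{i-1}}$ is already close to $\pi$ by the previous stage; this yields the claimed bound with $C_3=\max\{M(0),M(1),\overline M\}$. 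So your overall architecture (reduction to the sampled urn chain, exponential mixing, stagewise coupling) matches the paper, but the specific uniform-ergodicity/Doeblin step is wrong and must be replaced by the $M(z)$, $\overline M<\infty$ estimate, without which the conditional failure probabilities at later stages are not controlled.
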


We first note that Theorem \ref{thm:JRK-TSAW} holds for a single
curve, that is, for $N=1$.
\begin{theorem}[Theorem 1 in \cite{tTSAW}]\label{thm:marginalRK}
 For any fixed $(x,h) \in \R\times [0,\infty)$, the joint distribution of $\left( \Lambda^n_{x,h}(\cdot), \mu^{n,-}_{x,h}, \mu^{n,+}_{x,h} \right)$ converges as $n\to \infty$ to the distribution of $(\Lambda_{x,h}(\cdot), \mathfrak{m}^-_{x,h},\mathfrak{m}^+_{x,h})$. 
\end{theorem}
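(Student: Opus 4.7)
The plan is to deduce the theorem from the marginal curve convergence $\Lambda^n_{x,h}(\cdot) \Rightarrow \Lambda_{x,h}(\cdot)$ established in \cite[Theorem 1]{tTSAW}, and to upgrade it to include the absorption points $\mu^{n,\pm}_{x,h}$ via a continuous-mapping argument. I will first sketch the marginal convergence for the reader's benefit and then explain the endpoint upgrade, which is the ``minor difference'' with T\'oth's formulation alluded to earlier.

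For the marginal convergence, the key observation is that $\ell \mapsto \mathcal{E}^+_{\tau_{\fl{xn}, \fl{2\sigma h \sqrt{n}}}}(\ell)$, viewed on the absorbed side $\ell \geq x \vee 0$, is a Markov chain whose one-step transition, by Proposition~\ref{coup} specialized to $N=1$, is within exponentially small total variation of an i.i.d.\ $\pi$-distributed step as long as its current value stays bounded away from $0$. Since $\pi$ has mean zero and variance $\sigma^2$, Donsker's invariance principle applied after rescaling the spatial index by $n$ and values by $2\sigma\sqrt{n}$ identifies the scaling limit as a Brownian motion started at $h$ and absorbed at $0$. On the reflected segment $y\in[x,0]$ (arising when $x<0$), a parallel analysis of the left edge local times $\mathcal{E}^-$ produces reflected Brownian motion on $[x,0]$ emanating from the value $h$ at $x$, and the identity $L(n,k) = \mathcal{E}^+_n(k) + \mathcal{E}^-_n(k) + \ind{X_n = k}$ from \eqref{Lt} assembles the full curve $\Lambda_{x,h}$.

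To upgrade to joint convergence with the endpoints, I note that $\mu^{n,+}_{x,h}$ coincides, up to $O(1/n)$, with the first passage of $\Lambda^n_{x,h}(\cdot)$ to $0$ on $[x \vee 0,\infty)$, beyond which $\Lambda^n_{x,h}$ is identically zero; similarly $\mathfrak{m}^+_{x,h}$ is the first passage of $\Lambda_{x,h}(\cdot)$ to $0$ on the same interval. On $[x\vee 0, \infty)$ the limit $\Lambda_{x,h}(\cdot)$ is a Brownian motion (started at $h$ when $x\geq 0$, or at the value at $0$ inherited from the reflected segment when $x<0$) absorbed at $0$, so almost surely it is strictly positive on $[x \vee 0, \mathfrak{m}^+_{x,h})$ and the first-passage-to-zero functional is continuous at $\Lambda_{x,h}$ in the topology of uniform convergence on compacts. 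An application of Skorokhod's representation theorem combined with the continuous mapping theorem then yields joint convergence of $(\Lambda^n_{x,h}, \mu^{n,+}_{x,h})$ to $(\Lambda_{x,h}, \mathfrak{m}^+_{x,h})$; the argument for $\mu^{n,-}_{x,h}$ is symmetric.

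The main obstacle in the underlying marginal convergence (carried out in \cite{tTSAW}) is patching the absorbed and reflected regions across the base point $x$ and localizing the exponentially small coupling error from Proposition~\ref{coup} away from the absorption level; for the endpoint upgrade added here, the only nontrivial point is continuity of the first-passage functional at the limiting path, which reduces to the standard fact that Brownian motion started at a positive level hits $0$ at a well-defined, continuously depending time.
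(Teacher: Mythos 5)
There is a genuine gap, and it sits exactly in the one step you add beyond citing \cite{tTSAW}: the ``continuous mapping'' upgrade from curve convergence to convergence of the absorption points. The first-passage-to-zero functional is \emph{not} continuous at the limiting path $\Lambda_{x,h}$ within the class of paths that actually occur here. Both the prelimit curves $\Lambda^n_{x,h}$ and the limit are nonnegative, and the limit is absorbed at $0$ (it stays at $0$ after $\mathfrak{m}^+_{x,h}$ rather than crossing strictly below). Uniform convergence on compacts of nonnegative paths to such a path gives only the easy bound $\liminf_n \mu^{n,+}_{x,h} \geq \mathfrak{m}^+_{x,h}$; it cannot prevent the discrete curve from hovering at values of order $1/\sqrt{n}$ (i.e.\ edge local time $O(1)$) over a stretch of length of order $n$ beyond $\mathfrak{m}^+_{x,h}$, which would make $\mu^{n,+}_{x,h}$ strictly larger in the limit while the curves still converge uniformly (compare $f+1/n \to f$: the hitting time of $0$ jumps to $+\infty$). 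Neither Skorokhod representation nor tightness of the area $\int\Lambda^n_{x,h}$ rules this out, since a macroscopic stretch at height $O(1/\sqrt n)$ contributes negligible area. What is actually needed is a quantitative absorption estimate for the directed-edge local time Markov chain: once it falls below a small threshold (e.g.\ $n^{4\epsilon}$), it hits $0$ within a negligible number of sites (e.g.\ $n^{28\epsilon}$) with probability tending to one. This is precisely T\'oth's estimate \cite[(4.69)]{tTSAW}, which is also invoked repeatedly in Section~\ref{sec:JRK} of this paper for the same purpose; without it, your endpoint upgrade does not go through.

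For comparison, the paper does not reprove anything about the curves or the endpoints: T\'oth's Theorem~1 already contains the joint convergence with the absorption points, and the argument given here only reconciles two minor discrepancies — site versus directed-edge local times via \eqref{ltc} (whence the factor $\sigma$ versus $2\sigma$ in \eqref{UnRK}), and the stopping times $\tau_{k,m}$ versus $T^*_{k,m}$, which only affect the initial value of the edge local time chain and are handled by the almost sure limit \eqref{Lnxh-initial}. Your sketch of the marginal curve convergence via Proposition~\ref{coup} and Donsker is a reasonable summary of T\'oth's route, but the endpoint convergence cannot be recovered ``for free'' from it by a continuity argument; either cite it as part of \cite[Theorem 1]{tTSAW}, as the paper does, or supply the absorption estimate.
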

Theorem~\ref{thm:marginalRK} is somewhat different from
\cite[Theorem 1]{tTSAW}, so we will outline how the result stated
follows from \cite[Theorem 1]{tTSAW}. First of all, since
\begin{equation}\label{ltc}
 L(n,k) = \mathcal{E}_n^+(k) + \mathcal{E}_n^+(k-1)-\ind{X_0 < k < X_n} + \ind{X_n \leq k \leq X_0},
\end{equation}
it's enough to prove the convergence for directed edge local times, i.e., it suffices to show that 
\begin{equation}\label{UnRK}
 \left(\mathcal{E}^+_{\tau^n_{x,h}}(\fl{\boldsymbol{\cdot}\,n})/(\sigma \sqrt{n}), \mu^{n,-}_{x,h}, \mu^{n,+}_{x,h} \right)
 \underset{n\to\infty}{\Longrightarrow} (\Lambda_{x,h}(\cdot), \mathfrak{m}^-_{x,h},\mathfrak{m}^+_{x,h}).
\end{equation}
Note that we divide by $\sigma$ instead of $2\sigma$ (compare with
\eqref{dRKpath}) to account for using {\em directed} edge local
times. The remaining difference with the process in \cite{tTSAW} is that instead of the times $\tau_{k,m}$, \cite{tTSAW} uses the times $T^*_{k,m}$ of the $(m+1)$-th visit to $k$ from $k+1$ (or from $k-1$).
However, this difference affects only the starting point of the Markov
chains of directed edges, $\mathcal{E}^+_{\tau^n_{x,h}}(\cdot)$ or
$\mathcal{E}^+_{T^*_{\fl{xn},\fl{\sigma h\sqrt{n}}}}(\cdot)$, to the
left and right of $\fl{xn}$. Moreover, at the stopping time
$\tau^n_{x,h}=\tau_{\fl{xn},\fl{2\sigma h\sqrt{n}}}$ the walk will
have made approximately half of its steps to $\fl{xn}$ from the right
and half from the left in the sense that
\begin{equation}\label{Lnxh-initial}
 \lim_{n\to\infty} \mathcal{E}^+_{\tau^n_{x,h}}(\fl{xn})/\sqrt{n} = \s h, \qquad P\text{-a.s.}
\end{equation}
while the definition of $T^*_{k,m}$ and of the directed edge
local times imply that\linebreak
$\mathcal{E}^+_{T^*_{\fl{xn},\fl{\s h \sqrt{n}} }}(\fl{xn}) = \fl{\s h
  \sqrt{n}}+\ind{x\geq 0}$. Thus, there is essentially no
difference between the processes in \eqref{UnRK} and in \cite[Theorem
1]{tTSAW}.

The local time curve $\Lambda^n_{x,h}(\cdot)$ for the random walk and
the corresponding local time curve $\Lambda_{x,h}(\cdot)$ for the
TSRM consist of a ``forward'' and ``backward'' curve. For
convenience of notation we will use $\Lambda^{n,+}_{x,h}(\cdot)$ and
$\Lambda^{n,-}_{x,h}$ to denote the process $\Lambda^n_{x,h}(\cdot)$
restricted to $[x,\infty)$ and $(-\infty,x]$, respectively. Similarly,
we will let $\Lambda^+_{x,h}(\cdot)$ and $\Lambda^-_{x,h}(\cdot)$
denote the process $\Lambda_{x,h}(\cdot)$ restricted to $[x,\infty)$
and $(-\infty,x]$, respectively.  For a fixed $(x,h)$, the forward and
backward discrete curves $\Lambda^{n,+}_{x,h}$ and
$\Lambda^{n,-}_{x,h}$ are asymptotically independent. Indeed, since
the directed edge local times to the right and left, respectively, are
both Markov chains, the dependence is only through the initial
conditions, but as noted above the initial conditions of the forward
and backward curves are deterministic in the limit (see
\eqref{Lnxh-initial}).  Therefore, Theorem \ref{thm:marginalRK} can be
proved by handling the forward and backward curves separately. That
is, we can prove Theorem \ref{thm:marginalRK} by proving that
$(\Lambda^{n,+}_{x,h}(\cdot), \mu_{x,h}^{n,+}) \Rightarrow
(\Lambda^+_{x,h}(\cdot), \mathfrak{m}_{x,h}^+)$ and
$(\Lambda^{n,-}_{x,h}(\cdot), \mu_{x,h}^{n,-}) \Rightarrow
(\Lambda^-_{x,h}(\cdot), \mathfrak{m}_{x,h}^-)$.
\smallskip

However, to prove the joint GRKT, Theorem \ref{thm:JRK-TSAW}, a
difficulty arises trying to handle the forward and backward curves for
different space-local time starting points. More precisely, given two
points $(x_1,h_1)$ and $(x_2,h_2)$ with $x_1<x_2$,
  it is not difficult to study the joint
  distribution of the forward curves
  $(\Lambda^{n,+}_{x_1,h_1}(\cdot), \Lambda^{n,+}_{x_2,h_2}(\cdot))$
  or of the backward curves
  $(\Lambda^{n,-}_{x_1,h_1}(\cdot), \Lambda^{n,-}_{x_2,h_2}(\cdot))$,
  but the joint distribution of $(\Lambda^{n,+}_{x_1,h_1}(\cdot), \Lambda^{n,-}_{x_2,h_2}(\cdot))$
  turns out to be much more complicated to work with.  In fact, for the limiting curves
    $\Lambda_{x_i,h_i}(\cdot)$ corresponding to the TSRM (i.e., in the
    Brownian web), it is known that the forward and backward curves
    ``reflect off one another'' in a precise sense \cite{STW00}, but
    it seems quite non-trivial to prove that
    this reflection property holds asymptotically for the families of
    discrete forward and backward curves
    $\Lambda_{x_i,h_i}^{n,\pm}(\cdot)$.   Thankfully, as we will
    show in Corollary~\ref{cor:FJRK-JRK}, it will
    be enough to only consider the joint distributions of the forward
    curves (or the backward curves) alone. This is due to a discrete version of the duality which we exploit in the proof of Corollary~\ref{cor:FJRK-JRK}.  The following
    result gives forward joint GRKT that will be needed. It can be
    equivalently stated for the backward curves.

\begin{theorem}[Joint forward GRKT]\label{thm:JRK-forward}
  For any $N\in\N$ and any choice of
  $(x_i,h_i)\in \R \times \R_+$, $1\le i\le N$, the
  joint distribution of the processes
  $\Lambda^{n,+}_{x_i,h_i}(\cdot)$, the rescaled
  endpoints
  $\mu^{n,+}_{(x_i h_i)} $,
  and merging points
  $\mu^{n,+}_{(x_i,h_i),(x_j,h_j)}$, $1\le i\leq N, \, i<j\le N$, converges as $n\to\infty$ to the joint distribution of
  $\Lambda^+_{x_i,h_i}(\cdot)$,
  $\mathfrak{m}^+_{x_i,h_i}$,
  and
  $\mathfrak{m}^+_{(x_i,h_i),(x_j,h_j)} $, $1\le i\leq N, \, i<j\le N$.
\end{theorem}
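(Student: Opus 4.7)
The plan is to reduce the theorem to a statement about the directed-edge local time Markov chains $\mathcal{E}^+_{\tau^n_{x_i,h_i}}(\cdot)$ and then to couple their joint spatial evolution with $N$ independent random walks whose step distribution is $\pi$. First I will use \eqref{ltc} together with the asymptotic \eqref{Lnxh-initial} for the initial values to reduce the convergence of $(\Lambda^{n,+}_{x_i,h_i})_{i\leq N}$ (rescaled by $2\sigma\sqrt{n}$) to the analogous joint convergence for the directed-edge curves rescaled by $\sigma\sqrt{n}$. After ordering $x_1\leq x_2\leq\cdots\leq x_N$, exactly the first $i$ chains are active on each interval $[x_i,x_{i+1}]$ (with $x_{N+1}=\infty$), so the joint process in the spatial variable is Markov on each such interval, and I will proceed inductively on $N$, processing intervals from left to right and matching initial conditions at each $x_i$ via the known marginal Ray--Knight theorem.

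The heart of the argument will be an iterative coupling built on Proposition~\ref{coup}. I will fix a slowly-growing threshold $a_n = (\log n)^2$ and couple the joint Markov chain to $N$ independent random walks with i.i.d.\ $\pi$-distributed increments for as long as every pairwise gap exceeds $a_n$: by Proposition~\ref{coup} the one-step total-variation error is $O(e^{-C_2 a_n})$, which summed over $\ell$ of order $n$ stays $o(1)$. Donsker's invariance principle applied to each of the $N$ independent $\pi$-random walks (using that $\pi$ is symmetric with variance $\sigma^2$) will then yield joint convergence to $N$ independent Brownian motions on the spatial time before the first close approach. The reflection at $0$ on the interval $[x_i\wedge 0, x_i\vee 0]$ and absorption at $0$ on the complement arise naturally, since the chains are non-negative and once a chain hits $0$ outside the reflection region the walk $X$ no longer contributes to the corresponding directed-edge counts.

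The key microscopic observation, which I will verify first, is that once two directed-edge processes $\mathcal{E}^+_{\tau^n_{x_i,h_i}}(\cdot)$ and $\mathcal{E}^+_{\tau^n_{x_j,h_j}}(\cdot)$ agree at some $\ell_0 \geq \max(\lfloor x_i n\rfloor, \lfloor x_j n\rfloor)$, they agree for all $\ell \geq \ell_0$: the equality of the $+$-counts on edge $\ell_0\to\ell_0+1$ at the two stopping times forces the walk to stay $\leq \ell_0$ on the interval between them, so the $+$-counts on all edges to the right are unchanged. Hence microscopically, meeting equals coalescence. The main obstacle will be to show that once the coupling above first fails, i.e.\ when some gap drops to $a_n$, the two relevant chains actually meet within $o(n)$ spatial edges, so that on the rescaled scale the first close approach and the coalescence point coincide. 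Since $\pi$ has Gaussian tails, the difference of two nearly-independent $\pi$-random walks behaves like a centered random walk with positive variance, so starting from a gap of order $a_n$ the hitting time of zero is $O(a_n^2) = o(n)$ with high probability; iterating the coupling on the remaining chains after each coalescence will then force genuine coalescence in the limit. Once joint convergence of the curves to the system of independent coalescing reflected/absorbed Brownian motions is established, the absorption points $\mu^{n,+}_{x_i,h_i}$ and the merging points $\mu^{n,+}_{(x_i,h_i),(x_j,h_j)}$ are continuous functionals of the curves on an event of full limiting measure, and the continuous mapping theorem will deliver the joint convergence in the product topology stated in the theorem.
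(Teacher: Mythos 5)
Your overall strategy mirrors the paper's: reduce via \eqref{ltc} to the directed-edge chains, couple their joint increments to i.i.d.\ $\pi$-walks using Proposition~\ref{coup} while the chains are well separated, invoke Donsker-type convergence, observe that microscopic meeting of two directed-edge chains is permanent (this observation is correct and is also what the paper uses), and then argue that a near approach forces actual coalescence on a negligible spatial scale. The difficulty is that the last step — the one you yourself flag as ``the main obstacle'' — is exactly where your argument has a genuine gap, and it is the step to which the paper devotes most of its technical effort.

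Once the gap between two chains drops to your threshold $a_n$, the coupling with independent $\pi$-increments supplied by Proposition~\ref{coup} is precisely what is no longer available: the total-variation bound degenerates when $n_i-n_{i-1}$ is small, so in the near-approach regime the two chains are driven by strongly dependent urn draws, their difference $S_\Delta=S_2-S_1$ is not a Markov chain, and it may carry a drift; ``behaves like a centered random walk with positive variance'' is an assertion, not a consequence of anything you have established. The paper handles this by a dedicated gambler's-ruin analysis for the two-dimensional process $(\t{S}_1,\t{S}_\Delta)$: the drift and tilted-exponential estimates \eqref{A4.1}--\eqref{A4.3}, the submartingales $\pm \t{S}_\Delta(j)+C_6e^{-C_5\t{S}_\Delta(j)}$ of Lemma~\ref{subm}, the overshoot Lemma~\ref{os} (whose proof occupies Appendix~\ref{sec:overshoot} precisely because the difference is non-Markovian), Lemma~\ref{gr3}, and finally Corollary~\ref{close}, which is the statement you need: from a gap $\le b$ with the lower chain above $b^4$, coalescence occurs within $b^7$ steps with probability tending to one. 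Your proposal replaces all of this with a heuristic; moreover, even for a genuine centered random walk the quantitative claim is false as stated — started from height $a_n$, the probability of hitting $0$ within $O(a_n^2)$ steps is bounded away from $1$, not tending to $1$ (one needs a time window $\gg a_n^2$, which is fixable, but only after the much harder issue of dependence and drift near the meeting is resolved). A second, smaller omission of the same nature: Proposition~\ref{coup} also requires the \emph{lowest} chain to be far from $0$, so your coupling breaks down near absorption as well; the paper controls this regime with the single-curve estimate \cite[(4.69)]{tTSAW} (absorption within $n^{28\epsilon}$ steps once the curve falls below $n^{4\epsilon}$), whereas in your write-up the reflected/absorbed boundary behavior is asserted to ``arise naturally.'' Without substitutes for Corollary~\ref{close} and for the boundary estimates, the convergence of the merge and absorption points — the new content of the theorem beyond \cite[Theorem 1]{tTSAW} — is not proved.
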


\begin{proof}
  Due to the relationship between local times of directed edges and
  local times of sites in \eqref{ltc}, it suffices to prove a
  corresponding joint diffusion limit for directed edge local
  times.
  That is, we will prove the joint convergence with the site
    local time profiles $\Lambda_{x_i,h_i}^{n,+}(\cdot)$ replaced by
    the directed edge local time processes
    $\left(\mathcal{E}^+_{\tau_{x_i,h_i}^n}(\fl{ y n})/(\s\sqrt{n})
    \right)_{y\geq x_i}$.  Moreover, by the Markov property for
    the directed edge local times there is no loss of generality in
  assuming that $x_i=x$ for all $i$.  Indeed, by induction
    on the number of points, if
    $x_1\leq x_2 \leq \cdots \leq x_{k-1} \leq x_k$, then by
    conditioning on the values of
    $h_{i,k}^n = \mathcal{E}^+_{\tau_{x_i,h_i}^n}(\fl{x_k
      n})/(\s\sqrt{n})$ it's enough to prove the joint convergence of
    the curves started from the pairs
    $(x_k,h_{1,k}^n), (x_k,h_{2,k}^n),\ldots,(x_k,h_{k-1,k}^n),
    (x_k,h_k)$.

    Fix an $x\in\R$. Details of the proof depend on the sign of
    $x$ but the difference between the three cases ($x<0,\ x=0,\ x>0$)
    is very minimal.  For that reason, we shall only consider the case
    $x<0$, in which we have reflection on $[x,0)$ and absorption on
    $[0,\infty)$. The other two cases have no reflection interval and
    are simpler. The proof proceeds by induction on $N$.  The case
    $N=1$ is contained in Theorem~\ref{thm:marginalRK}. See also
    \eqref{UnRK} and discussion below it.

  We shall first treat the case $N=2$ in detail. Fix $x<0$,
  $\epsilon\in(0,1/100)$, and $0\le h_1<h_2$.  We start by lining up
  several facts about two sequences of rescaled reflected coalescing
  random walks with independent integer-valued increments
  $\zeta_i(j),\ j\in\N,\ i=1,2$, with mean $0$, variance
  $\sigma^2\in(0,\infty)$, and a finite third moment. For $i=1,2$ let
\[\lim_{n\to\infty}Y^n_i(0)=h_i\ \text{a.s},\quad
  Y^n_i\left(\frac{j}{n}\right)=\left|Y^n_i\left(\frac{j-1}{n}\right)+\frac{\zeta_i(j)}{\sigma\sqrt{n}}\right|,\quad
  j\ge 1.\] At each step we first connect
$Y^n_i\left(\frac{j-1}{n}\right)$ and
$Y^n_i\left(\frac{j-1}{n}\right)+\frac{\zeta_i(j)}{\sigma\sqrt{n}}$
and then reflect at $0$ if the second point is negative. We get a
continuous piecewise linear curve $Y^n_i(y),\ y\ge 0$, which hits $0$
in the interval $((j-1)/n, j/n)$ if the expression inside the absolute
value above is negative. We also define $\t{Y}^n_2$, to be the walk
which starts from $Y^n_2(0)-\fl{n^\epsilon}/(\sigma\sqrt{n})$ and uses
the same increments as $Y^n_2$. Note that
$\t{Y}^n_2\left(\frac{j}{n}\right)=Y^n_2\left(\frac{j}{n}\right)-\fl{n^\epsilon}/(\sigma\sqrt{n})$
up to (not including) the first time the right hand side falls below
$0$. We produce $\t{Y}^n_2(y),\ y\ge 0$, by the same kind of linear
interpolation as above and define the hitting times of $0$ after time
$|x|$ and the crossing time of $Y^n_1$ and $\t{Y}^n_2$ respectively as
follows:
\begin{align*}
  &\nu^n_1=\inf\{y\ge |x|: Y^n_1(y)=0\},\ \ \t{\nu}^n_2=\inf\{y\ge |x|: \t{Y}^n_2(y)=0\},\\ & \t{c}^n_{1,2}=\inf\{y\ge 0: \t{Y}^n_2(y)=Y^n_1(y)\}\ \left(=\inf\{y\ge 0: Y^n_2(y)-Y^n_1(y)=\fl{n^\epsilon}/(\sigma\sqrt{n})\}\right).
\end{align*}
\begin{itemize}
\item[(I)] By independence of $Y^n_1(\cdot)$ and $\t{Y}^n_2(\cdot)$,
  convergence of one-dimensional processes (see \cite{NP}\footnote{In
    \cite{NP} the authors first construct a discrete reflected random
    walk and only then take the linear interpolation. Our definition
    naturally keeps track of all reflection times of interpolated
    walks. The difference in the definitions does not affect the
    convergence.}), and the continuous mapping theorem,
  \[(Y^n_1(\cdot),\t{Y}^n_2(\cdot),\nu^n_1,\t{\nu}^n_2,\t{c}^n_{1,2})\Longrightarrow
  (|W_1(\cdot)|,|W_2(\cdot)|,\nu_1,\nu_2,c_{1,2})\quad\text{ as $n\to\infty$},\]
  where the independent Brownian motions $W_1(\cdot)$ and $W_2(\cdot)$
  start at $h_1$ and $h_2$ respectively and the last three stopping times
  are defined in the same way as for $Y^n_1(\cdot)$ and
  $\t{Y}^n_2(\cdot)$. The continuous mapping theorem is applicable,
  since the set of discontinuities of the last three functions on the path
  space of a standard two dimensional Brownian motion has measure $0$.
\item[(II)] We define a mapping $\phi$ which takes a pair of paths
  $(f_1,f_2)\in C([0,\infty))^2,\ f_1,f_2\ge 0$,
  $f_1(0)\le f_2(0)$, and returns a pair of coalescing paths
  $\phi(f_1,f_2)$ as follows. Let
  $c(f_1,f_2):=\inf\{y\ge 0:f_1(y)=f_2(y)\}\in[0,\infty]$ and
  $\nu_0(f_i)=\inf\{y\ge|x|:f_i(y)=0\}\in[|x|,\infty]$. We shall refer
  to $\nu_0$ as a ``marked time''.  Set \[\phi(f_1,f_2)(y):=
    \begin{cases}
      (f_1(y),f_2(y)),&\text{if }y\le c(f_1,f_2);\\
      (f_1(y),f_1(y)),&\text{if }y>c(f_1,f_2)\ \text{and
      }c(f_1,f_2)\le \nu_0(f_1);\\ (f_2(y),f_2(y)),&\text{if
      }y>c(f_1,f_2)\ \text{and }c(f_1,f_2)>\nu_0(f_1).
    \end{cases}
  \] In words, at the time when the two paths meet we keep only the
  lower path if they meet before or at the marked time of the lower
  path and we keep only the upper path if they meet after the marked
  time of the lower path. This mapping is a.s.\ continuous for the
  pair of independent reflected Brownian motions. Hence, just as
  above, we can conclude the convergence of a pair of rescaled
  coalescing reflected random walks together with their marked and
  coalescence times to a pair of coalescing reflected Brownian motions
  together with their marked and coalescence times.
\end{itemize}

From now on we assume that $\zeta_i(j)\sim\pi$, $i=1,2$,
$j\in\N$. Set
\begin{equation}\label{S}
 S^n_i(j)=\mathcal{E}^+_{\tau^n_{x,h_i}}(j+\fl{xn}),\ j\ge 0; \quad W^n_i(y) = \frac{S^n_i(yn)}{\sigma\sqrt{n}},\quad yn\in \N_0, \quad i=1,2,
\end{equation}
and linearly interpolated in between so that $W^n_i\in
C([0,\infty))$. For convenience we set $S^n_0(j)=0$ for all $j\ge 0$.
Note that $W^n_i(0)\to h_i$, $i=1,2$, a.s.\ as $n\to\infty$, see
\eqref{Lnxh-initial}.

Recall that $\epsilon\in(0,1/100)$ and define for $i=1,2$
\[\omega^{n,4\epsilon}_i=\min\{j\ge \fl{|x|n}: S^n_i(j)\le
n^{4\epsilon}\}, \quad 
\omega^{n,\epsilon}_{1,2}=\min\{j\ge 0: S^n_2(j)-S^n_1(j)\le
n^\epsilon\}.\] We consider the pair $(S^n_1,S^n_2)$ and enlarge the probability
space as necessary to construct a pair of crossing marked and reflected
Markov chains $(R^n_1,\t{R}^n_2)$ as follows. Let $R^n_1(0)=S^n_1(0)$ and
$\t{R}^n_2(0)=S^n_2(0)-\fl{n^\epsilon}$.

Up until $\omega^{n,\epsilon}_{1,2}\wedge\omega^{n,4\epsilon}_1$ we
set $R^n_1(j)=S^n_1(j)$, couple the increments of $S^n_2$ with
independent increments $\zeta_2(j)$ at each step $j$, and
set $\t{R}^n_2(j)=\t{R}^n_2(0)+\sum_{k=1}^j \zeta_2(j)$. Given the
initial data, the coordinate processes of $(R^n_1,\t{R}^n_2)$ are
independent, and
$(R^n_1(j),\t{R}^n_2(j))=(S^n_1(j),S^n_2(j)-\fl{n^\epsilon})$ for
$j<\omega^{n,\epsilon}_{1,2}\wedge\omega^{n,4\epsilon}_1$, provided
that the coupling does not break down.
\begin{enumerate}[label=(\roman*)]
\item If $\omega^{n,\epsilon}_{1,2}<\omega^{n,4\epsilon}_1$ then
  \begin{enumerate}[label=(\alph*)]
  \item at time $\omega^{n,\epsilon}_{1,2}$ we decouple $\t{R}^n_2$
    from $S^n_2$ and simply let $\t{R}^n_2$ use increments
    $\zeta_2(j)$, i.e., $\t{R}^n_2(j)=|\t{R}^n_2(j-1)+\zeta_2(j)|$ for all $j>\omega^{n,\epsilon}_{1,2}$;
  \item $R^n_1$ continues to follow $S^n_1$ up to time
    $\omega^{n,4\epsilon}_1$, after which it uses independent
    increments $\zeta_1(\cdot)$ and follows a reflected
    marked random walk.
  \end{enumerate}
\item If $\omega^{n,\epsilon}_{1,2}\ge \omega^{n,4\epsilon}_1$ then
  \begin{enumerate}[label=(\alph*)]
  \item at time $\omega^{n,4\epsilon}_1$ we decouple $R^n_1$ from
    $S^n_1$, $R^n_1$ continues to use independent increments $\zeta_1(\cdot)$ and
    follows a reflected marked random walk;
  \item we continue to couple the increments of $\t{R}^n_2$ and $S^n_2$ up to time $\omega^{n,4\epsilon}_2$, starting from which we let $\t{R}^n_2$ use  increments $\zeta_2(\cdot)$ and continue as a reflected marked random walk.
  \end{enumerate} 
\end{enumerate}
We agree that if any of the couplings above breaks down before the
specified decoupling time, we shall just use the corresponding
independent $\pi$-distributed increments for the rest of the
time. Since a single curve gets absorbed within $n^{1+\epsilon}$ units
of time with probability tending to 1, Lemma~\ref{notdecoup} implies
that the above couplings will not break down before the specified
decoupling time with probability tending to 1 as $n\to\infty$.

Diffusively rescaling the process $(R^n_1,\t{R}^n_2)$ and ``connecting
the dots'' in the same way as before we get a sequence of pairs of
independent reflected marked processes which converge in distribution
together with their rescaled marked and crossing times to a pair of
independent marked reflected Brownian motions and their marked and
crossing times.  Indeed, by the result for $N=1$ and fact (I), each
sequence of the rescaled coordinate processes together with their
rescaled marked times converges to a reflected Brownian motion and its
marked time. Since the increments of the coordinate processes
are independent, the claimed convergence follows. By (II), we get the
convergence of rescaled marked coalescing reflected processes
$(R^n_1,\t{R}^n_2)$ and sequences of their rescaled marked and
coalescence times to a pair of coalescing reflected Brownian motions
together with their marked and coalescence times.

Our next task is to sort out the marked and coalescence times and
conclude the convergence of $(W^n_1(\cdot),W^n_2(\cdot))$ together
with their coalescence and absorption times to a pair of coalescing
independent reflected/absorbed Brownian motions together with their
coalescence and absorption times.

We shall use results from Appendix~\ref{sec:aux} to show that the
coalescence and absorption times we are interested in are close (in
the sense that after scaling, with probability tending to 1, the difference is
negligible) to the crossing and marked times for which
the convergence has been shown above.

Let us start with scenario (i) (coalescence before the marked time for
$R^n_1$). There are two cases: (i1)
$S^n_2(\omega^{n,\epsilon}_{1,2})-\fl{n^\epsilon}\le n^{4\epsilon}$
and (i2)
$S^n_2(\omega^{n,\epsilon}_{1,2})-\fl{n^\epsilon}> n^{4\epsilon}$.
% Note that (i) and (a) imply, in particular, that $\omega^{n,\epsilon}_{1,2}\le \fl{|x|n}$ and that,
In case (i1), with probability tending to 1, by the result for a
single curve, \cite[(4.69)]{tTSAW}, $S^n_2$ will hit zero (but not
necessarily gets absorbed) in at most $\ceil{n^{28\epsilon}}$
additional steps. This means that $S^n_1$ and $S^n_2$ must have
coalesced within the same number of steps, which is negligible after
division by $n$ as $n\to\infty$. This shows the convergence of the
coalescence times in this case.

In case (i2), we have that
$\t{R}^n_2(\omega^{n,\epsilon}_{1,2}),\,R^n_1(\omega^{n,\epsilon}_{1,2})>
n^{4\epsilon}$. It follows from Corollary~\ref{close} (with
$b= \fl{n^\epsilon}$) that with probability tending to 1, the crossing
time of $R^n_1$ and $\t{R}^n_2$ and the actual coalescence time of
$S^n_1$ and $S^n_2$ will differ by less than $n^{7\epsilon}$, which is
negligible after division by $n$.

In both cases, again by the result for a single curve,
\cite[(4.69)]{tTSAW}, with probability tending to 1, the common
absorption time of the coalesced processes differs from the marked
time of $R^n_1$ by no more than $n^{28\epsilon}$, which is negligible
after division by $n$.

Under scenario (ii) ($S^n_1$ gets within $n^{4\epsilon}$ from $0$
after time $\fl{|x|n}$ and before coalescence), again by
\cite[(4.69)]{tTSAW}, with probability tending to 1 the marked time of
$R^n_1$ will be within $n^{28\epsilon}$ from the absorption time of
$S^n_1$.  This difference is negligible after division by $n$. The
crossing time of $R^n_1$ and $\t{R}^n_2$ will be irrelevant for our
purposes, and the absorption time of $S^n_2$ will be within
$n^{28\epsilon}$ of the marked time of $\t{R}^n_2$ with probability
tending to 1.

From the above we conclude the desired convergence for the case $N=2$.
Suppose that we have the claimed result for all $1\le k\le N-1$ and
any choice of $x<0$ and $0\le h_1<h_2<\ldots<h_{N-1}$. Let
$h_N>h_{N-1}$ and consider the walks $(S^n_i(\cdot) )_{i\leq N}$ and
their rescaled versions $(W^n_i(\cdot) )_{i\leq N}$.  Let
$\omega^{n,\epsilon}_{N-1,N}=\min\{j\ge 0: S^n_N(j)-S^n_{N-1}(j)\le
n^\epsilon\}$. We shall take the pair $(S^n_{N-1}, S^n_N)$ and
construct $(R^n_{N-1}, \t{R}^n_N)$ using the same algorithm as for
$(R^n_1,\t{R}^n_2)$ above. By construction, given the initial
  data, the reflected marked random walk $\t{R}^n_N$ is independent
from all $(S^n_i(\cdot) )_{i\leq N-1}$. By the same argument as for
$N=2$, we can apply (II) to rescaled coalescing marked reflected
processes $(R^n_{N-1}, \t{R}^n_N)$ and in the same way as for $N=2$
deduce the convergence of $(W^n_{N-1},W^n_N)$ together with their
coalescence and absorption times to a pair of coalescing independent
reflected absorbed Brownian motions starting from $(h_{N-1},h_N)$
together with their coalescence and absorption times. The induction
hypothesis, the convergence of the initial data to deterministic
  values, and the fact that the increments of $\t{R}^n_N$ are
independent from all $(S^n_i(\cdot) )_{i\leq N-1}$ yield the statement
of the theorem for $x<0$.
 \end{proof}

Next, we show how Theorems \ref{thm:marginalRK} and \ref{thm:JRK-forward} can be combined to give the joint convergence of finitely many (forward and backward) Ray-Knight curves. 

  \begin{corollary}\label{cor:FJRK-JRK}
The joint GRKT, Theorem~\ref{thm:JRK-TSAW}, follows from the forward joint GRKT, Theorem \ref{thm:JRK-forward}, and the GRKT for a single curve, Theorem \ref{thm:marginalRK}. 
\end{corollary}

\begin{proof}
 Given any finite set of points $(x_1,h_1),\ldots,(x_N,h_N)$, it follows from Theorem \ref{thm:marginalRK} that the joint distribution of the curves $(\Lambda_{x_i,h_i}^n(\cdot))_{i\leq N}$ and the endpoints $\{\mu_{x_i,h_i}^{n,*} \}_{i\leq N,\, * \in \{+,-\} }$ is tight. Moreover, any subsequential limit must have the property that the limiting curves $\Lambda_{x_i,h_i}(\cdot)$ are continuous and the subsequential limits of the endpoints $\mu_{x_i,h_i}^{n,*}$  are equal to the endpoints  of the curves $\Lambda_{x_i,h_i}(\cdot)$. Furthermore, it follows from the forward (resp.\ backward) joint GRKT that the subsequential limits of the merge points $\mu_{(x_i,h_i),(x_j,h_j)}^{n,+}$ (resp.\ $\mu_{(x_i,h_i),(x_j,h_j)}^{n,-}$) must be the merge points of the curves $\Lambda_{x_i,h_i}(\cdot)$ and $\Lambda_{x_j,h_j}(\cdot)$.

 Therefore, it remains only to show that the subsequential limit of
 the joint distribution of the curves is unique.  To this end, for
 each $i\leq N$, $m_i\in\N$, let $(y_{i,j})_{j\leq m_i}$ be a
 finite collection of spatial points. We need to show that the
 limiting distribution of the collection of random variables
 $\Lambda_{x_i,h_i}^n(y_{i,j})$, $i\leq N$, $j\leq m_i$, is determined
 by the forward (or backward) joint GRKT.  If 
 $y_{i,j} \geq x_i$ then it is part of the forward curve
 $\Lambda_{x_i,h_i}^{n,+}(\cdot)$ and the joint convergence of these
 finite dimensional distributions is covered by Theorem
 \ref{thm:JRK-forward}. However, the following lemma shows that the finite dimensional distributions of
 the discrete backward curves $\Lambda_{x_i,h_i}^{n,-}(\cdot)$ are
 determined by the finite dimensional distributions
 of the discrete forward curves.

\begin{lemma}\label{lem:weave}
 For any $y\neq x$ and $h,h'\geq 0$
% $h\geq 0$ and $0 \leq b_1 < b_2$ 
 we have that if $n$ is large enough so that $\fl{xn} \neq \fl{yn}$ then 
%\begin{equation}%\label{interweave}
%  \begin{split}
%   \{ \Lambda_{x,h}^n(y) = 0 \} &= \{ \Lambda_{y,0}^n(x) \geq \Lambda_{x,h}^n(x) \} \\
%  \text{and}\qquad 
%  \{\Lambda_{x,h}^n(y) \in (b_1,b_2] \} &= \{ \Lambda_{y,b_1}^n(x) < \Lambda_{x,h}^n(x) \leq \Lambda_{y,b_2}^n(x) \}. 
%  \end{split}
% \end{equation}
\begin{equation}\label{interweave}
 \{\Lambda_{x,h}^n(y) \leq h' \} = \{ \Lambda_{x,h}^n(x) \leq \Lambda_{y,h'}^n(x) \}.
\end{equation}
\end{lemma}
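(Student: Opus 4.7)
The plan is to unpack both sides of \eqref{interweave} in terms of the integer-valued quantities $L(\cdot,\cdot)$ and $\tau_{\cdot,\cdot}$, and then deduce the equality from monotonicity of $L$ in its first argument together with the hypothesis $\fl{xn} \neq \fl{yn}$.

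First I would set $k = \fl{xn}$, $m = \fl{2\sigma h \sqrt{n}}$, $k' = \fl{yn}$, $m' = \fl{2\sigma h' \sqrt{n}}$, and record that by definition of $\tau_{k,m}$ as the first time $L(\cdot,k)$ strictly exceeds $m$ one has $L(\tau_{k,m}, k) = m+1$. Since $L(\tau_{k,m}, k')$ is an integer, the left-hand event in \eqref{interweave} is identical to $\{L(\tau_{k,m}, k') \leq m'\}$, and the right-hand event is $\{m+1 \leq L(\tau_{k',m'}, k)\}$.

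The key step is then a pair of dual rewrites using monotonicity of the site local times. By the definition of $\tau_{k',m'}$, the event $\{L(\tau_{k,m}, k') \leq m'\}$ equals $\{\tau_{k,m} < \tau_{k',m'}\}$, while the analogous argument using the definition of $\tau_{k,m}$ gives $\{L(\tau_{k',m'}, k) \geq m+1\} = \{\tau_{k',m'} \geq \tau_{k,m}\}$. Thus \eqref{interweave} reduces to the identity $\{\tau_{k,m} < \tau_{k',m'}\} = \{\tau_{k,m} \leq \tau_{k',m'}\}$, i.e., to the statement that $\tau_{k,m} \neq \tau_{k',m'}$. This is exactly where the assumption $k \neq k'$ enters: at time $\tau_{k,m}$ the walk sits at $k$ (this being its $(m+1)$-st visit there), while at $\tau_{k',m'}$ it sits at $k'$, so equality of the two times would force $k = k'$. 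Beyond careful bookkeeping of integer parts and strict versus weak inequalities, there is no genuine obstacle in this lemma; the whole content is that the ordering of two inverse local times is equivalent to the ordering of the associated site local time values, which is a direct monotonicity statement.
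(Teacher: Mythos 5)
Your proof is correct and rests on the same duality the paper exploits: comparisons of site local times evaluated at the stopping times are equivalent to orderings of the inverse local times, together with the observation that $\tau_{\fl{xn},m}\neq\tau_{\fl{yn},m'}$ because the walk sits at the distinct sites $\fl{xn}$ and $\fl{yn}$ at those two times (this is exactly where the hypothesis $\fl{xn}\neq\fl{yn}$ enters in the paper as well). The only organizational difference is that the paper first proves the inversion formula \eqref{backpath}, $L(\tau_{k,m},j)=\min\{\ell\geq 0: L(\tau_{j,\ell},k)\geq m+1\}$, and then passes to the rescaled curves via monotonicity and right-continuity of $b\mapsto\Lambda^n_{y,b}(x)$, whereas you reduce both events directly to $\{\tau_{k,m}<\tau_{k',m'}\}$ versus $\{\tau_{k,m}\leq\tau_{k',m'}\}$ — a slightly more direct route with the same substance.
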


\begin{remark}
 Note that since $\Lambda_{x,h}^n(x) = \frac{\fl{2\s h\sqrt{n}}+1}{2\s \sqrt{n}} \to h$ as $n\to\infty$, \eqref{interweave} is the discrete analog of the property that the events $\{\Lambda_{x,h}(y) \leq h' \}$ and $ \{ h \leq \Lambda_{y,h'}(x) \}$ differ by a null set (see Corollary \ref{cor:sandwich} for a more precise statement). 
\end{remark}

Before giving the proof of Lemma \ref{lem:weave} we shall use it to justify our last claim. %\blu{\sout{that the finite dimensional distributions of the forward curves determine the finite dimensional distributions of the backward curves. }}
If $y< x$, then the event on the left side of \eqref{interweave} involves  
the backward curve from $(x,h)$ while the event on the right side of \eqref{interweave} only involves the forward curve from $(y,h')$. 

Now, suppose the spatial points $(y_{i,j})_{i\leq N, j\leq m_i}$ are ordered so that $y_{i,j} < x_i$ for $j\leq m_i'$ and $y_{i,j} \geq x_i$ for $m_i' < j \leq m_i$, 
and for each $i\leq N$ and $j\leq m_i$ fix $b_{i,j}>0$. 
Then it follows from Lemma \ref{lem:weave} 
%together with Theorem \ref{thm:JRK-forward} 
that for $n$ sufficiently large we have
\begin{align*}
&P\left( \Lambda_{x_i,h_i}^n(y_{i,j}) \leq b_{i,j}, \quad \forall i\leq N, j\leq m_i \right)\\
&= P\left( \bigcap_{i\leq N} \left( \bigcap_{j\leq m_i'} \left\{ \Lambda_{x_i,h_i}^n(x_i) \leq \Lambda_{y_{i,j},b_{i,j}}^n(x_i) \right\} 
\cap 
\bigcap_{m_i' < j \leq m_i} \left\{ \Lambda_{x_i,h_i}^n(y_{i,j}) \leq b_{i,j} \right\} \right) \right).
\end{align*}
Since the events in the probability on the right depend only on the finite dimensional distributions of the forward curves it then follows from Theorem \ref{thm:JRK-forward} that 
\begin{align}
 \lim_{n\to\infty} &P\left( \Lambda_{x_i,h_i}^n(y_{i,j}) \leq b_{i,j}, \quad \forall i\leq N, j\leq m_i \right) \nonumber \\
 &= P\left( \bigcap_{i\leq N} \left( \bigcap_{j\leq m_i'} \left\{ h_i < \Lambda_{y_{i,j},b_{i,j}}(x_i) \right\} 
\cap 
\bigcap_{m_i' < j \leq m_i} \left\{ \Lambda_{x_i,h_i}(y_{i,j}) \leq b_{i,j} \right\} \right) \right). \label{Lnxhy-lim}
\end{align}
A few points of further explanation are due regarding the application of Theorem \ref{thm:JRK-forward} in this last equality.

Let $m_i' < j \leq m_i$. Since  $\Lambda_{x_i,h_i}(\cdot)$ is a reflected/absorbed Brownian motion and $b_{i,j} > 0$, it follows that   $\Lambda_{x_i,h_i}(y_{i,j})$ doesn't have an atom at $b_{i,j}$.  
Thus Theorem \ref{thm:JRK-forward} justifies replacing the events $\{\Lambda_{x_i,h_i}^n(y_{i,j}) \leq b_{i,j} \}$ by the corresponding events $\{\Lambda_{x_i,h_i}(y_{i,j}) \leq b_{i,j} \}$ in the limit.

Let $j\leq m_i'$. To justify replacing the events $\{ \Lambda_{x_i,h_i}^n(x_i) \leq \Lambda_{y_{i,j},b_{i,j}}^n(x_i) \}$ with the corresponding events $\{ h_i < \Lambda_{y_{i,j},b_{i,j}}(x_i)\}$ we need to consider two cases. 
 \begin{description}
  \item[Case $h_i>0$ or $x_i \leq 0$] this case is justified by noting that $\Lambda_{x_i,h_i}^n(x_i) = \frac{\fl{2\s h_i \sqrt{n}}+1}{2\s h_i \sqrt{n}} \to h_i$ as $n\to \infty$ and that since $\Lambda_{y_{i,j},b_{i,j}}(\cdot)$  is a reflected/absorbed Brownian motion then the distribution of $\Lambda_{y_{i,j},b_{i,j}}(x_i)$ doesn't have an atom at $h_i$ if either $h_i>0$ or if $x_i \leq 0$. 
  \item[Case $h_i=0$ and $x_i>0$] in this case, first of all note that
  \[
   \left\{ \Lambda_{x_i,0}^n(x_i) \leq \Lambda_{y_{i,j},b_{i,j}}^n(x_i) \right\} 
%   = \{ \Lambda^{\red{n}}_{x_i,0}(y_{i,j}) \leq b_{i,j} \} 
   = \{ \tau^n_{y_{i,j},b_{i,j}} > \tau^n_{x_i,0} \}
   = \left\{ n\mu^{n,+}_{y_{i,j},b_{i,j}} \geq \fl{x_i n} \right\}.
  \]
 Since Theorem \ref{thm:JRK-forward} implies that $\mu^{n,+}_{y_{i,j},b_{i,j}} \Rightarrow \mathfrak{m}^+_{y_{i,j},b_{i,j}}$, and since $\mathfrak{m}^+_{y_{i,j},b_{i,j}}$ has a continuous distribution on $(0,\infty)$ it follows that we can replace the event $\{ \Lambda_{x_i,0}^n(x_i) \leq \Lambda_{y_{i,j},b_{i,j}}^n(x_i)\}$  with the event $\{ \mathfrak{m}^+_{y_{i,j},b_{i,j}} > x \}$ in the limit. 
 Finally, note that $\{ \mathfrak{m}^+_{y_{i,j},b_{i,j}} > x_i \} = \{\Lambda_{y_{i,j},b_{i,j}}(x_i) > 0 \}$. 
 \end{description}

Finally, since it follows from Corollary \ref{cor:sandwich} that 
\[
 \left\{ \Lambda_{x_i,h_i}(y_{i,j}) < b_{i,j} \right\} \subset \{h_i < \Lambda_{y_{i,j},b_{i,j}}(x_i)\}
 \subset \left\{ \Lambda_{x_i,h_i}(y_{i,j}) \leq b_{i,j} \right\}, 
\]
and since $P(\Lambda_{x_i,h_i}(y_{i,j}) = b_{i,j}) = 0$ when $b_{i,j}>0$ we can conclude from \eqref{Lnxhy-lim} that 
\begin{align*}
 &\lim_{n\to\infty} P\left( \Lambda_{x_i,h_i}^n(y_{i,j}) \leq b_{i,j}, \quad \forall i\leq N, j\leq m_i \right) 
 %&= P\left( \bigcap_{i\leq N} \left( \bigcap_{j\leq m_i'} \left\{ h_i < \Lambda_{y_{i,j},b_{i,j}}(x_i) \right\} 
%\cap 
%\bigcap_{m_i' < j \leq m_i} \left\{ \Lambda_{x_i,h_i}(y_{i,j}) \leq b_{i,j} \right\} \right) \right) \\
= P\left( \Lambda_{x_i,h_i}(y_{i,j}) \leq b_{i,j}, \quad \forall i\leq N, j\leq m_i \right). 
\end{align*}
This completes the proof of the corollary, pending the proof of Lemma \ref{lem:weave}.
\end{proof}

\begin{proof}[Proof of Lemma \ref{lem:weave}]
 For any integers $j\neq k$ and $m\geq 0$ we claim that
 \begin{equation}\label{backpath}
  L(\tau_{k,m},j) = \min\{ \ell \geq 0: L(\tau_{j,\ell},k) \geq m+1 \}. 
 \end{equation}
Indeed, this can be justified by noting that 
\[
 L(\tau_{k,m},j) = 0 
 \iff \tau_{k,m} < \tau_{j,0}
 \iff L(\tau_{j,0},k) \geq m+1,
\]
and that for $\ell \geq 1$, 
\[
  L(\tau_{k,m},j) = \ell 
 \iff \tau_{j,\ell-1} < \tau_{k,m} < \tau_{j,\ell}
 \iff L(\tau_{j,\ell-1},k) < m+1 \leq L(\tau_{j,\ell},k). 
\]
Translating the equation \eqref{backpath} to the rescaled path $\Lambda_{x,h}^n(\cdot)$ we have
\begin{align*}
 \Lambda^n_{x,h}(y) 
 &= \frac{L(\tau_{\fl{xn},\fl{2\s h \sqrt{n}}}, \fl{yn})}{2 \s \sqrt{n}} \\
 &= \frac{1}{2\s \sqrt{n}} \min\left\{ \ell \geq 0: L(\tau_{\fl{yn},\ell}, \fl{xn}) \geq \fl{2\s h\sqrt{n}} + 1 \right\} \\
 &= \inf\left\{ b\geq 0: L(\tau_{\fl{yn},\fl{2\s b \sqrt{n}}}, \fl{xn}) \geq \fl{2\s h\sqrt{n}} + 1 \right\} \\
 &= \inf\left\{ b\geq 0: \Lambda_{y,b}^n(x) \geq \frac{\fl{2\s h \sqrt{n}}+1}{2\s \sqrt{n}} \right\} = \inf\left\{ b\geq 0: \Lambda_{y,b}^n(x) \geq \Lambda_{x,h}^n(x) \right\}.
\end{align*}
From this we see that 
\begin{equation*}
 \Lambda_{x,h}^n(y) \leq h' \iff \inf\left\{ b\geq 0: \Lambda^n_{y,b}(x) \geq \Lambda_{x,h}^n(x) \right\} \leq h' \iff \Lambda_{x,h}^n(x) \leq \Lambda_{y,h'}^n(x). 
\end{equation*}
In the last equivalence we used the fact that $b\mapsto\Lambda_{y,b}^n(x)$ is non-decreasing and right continuous.

\end{proof}

\section{Tightness}\label{sec:tight}

In this section we shall prove Proposition~\ref{prop:XHtight}. The
proof will consist of two steps. First, in
Proposition~\ref{prop:Xtight} we will prove the tightness of
$({\cal X}_n)_{n\ge 1}$ and then in Proposition~\ref{prop:Htight} will
show that the sequence $({\cal H}_n)_{n\ge 1}$ is tight (recall that $\mathcal{X}_n$ and $\mathcal{H}_n$ are defined in \eqref{XH}). The joint
tightness will immediately follow.

\begin{lemma}\label{lem:Krange}
For integers $K,n\geq 1$ let $R_{K,n}$ be the event 
 \[
  R_{K,n} = \left\{ \max_{k\leq n^{3/2}} |X_k| \leq K n \text{ and } \max_{k \in \Z} L(\fl{n^{3/2}},k) \leq 2\s K \sqrt{n} \right\}. 
 \]
For any $\eta>0$ there exists $K <\infty$ such that  
for $n$ sufficiently large, $P(R_{K,n}) > 1-\frac{\eta}{3}$. 
\end{lemma}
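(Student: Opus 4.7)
The plan is to bound the two events in $R_{K,n}^c$ separately by union bound, making each at most $\eta/6$ for $K$ sufficiently large. A finite collection of ``small'' $n$ can be handled by the trivial bounds $|X_k|\leq k$ and $L(\fl{n^{3/2}},k)\leq \fl{n^{3/2}}+1$, so the substantive content is the asymptotic control as $n\to\infty$.

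For the displacement bound, I would first observe that overshooting $\pm Kn$ requires visiting $\fl{\pm Kn}$, whence
\[
\{\max_{k\leq n^{3/2}}|X_k|>Kn\}\;\subseteq\;\{\tau^n_{K,0}\leq n^{3/2}\}\cup\{\tau^n_{-K,0}\leq n^{3/2}\}.
\]
Corollary~\ref{cor:tau-lim} gives $\tau^n_{\pm K,0}/(2\s n^{3/2})\Rightarrow \mathfrak{t}_{\pm K,0}$, and the TSRM scaling \eqref{TSRM-scaling} yields $\mathfrak{t}_{\pm K,0}\overset{d}{=}K^{3/2}\mathfrak{t}_{\pm 1,0}$, with $\mathfrak{t}_{\pm 1,0}>0$ almost surely (since the continuous process $\mathfrak{X}$ started at $0$ takes positive time to accumulate local time at $\pm 1$). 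Choosing $K$ large enough that $P(\mathfrak{t}_{\pm K,0}\leq 1/(2\s))\leq \eta/24$ thus controls the displacement asymptotically.

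For the local-time bound, the crucial identity is that on the event $\{\tau^n_{0,h^*}>\fl{n^{3/2}}\}$ (for a chosen parameter $h^*$), the monotonicity of $m\mapsto L(m,k)$ together with \eqref{dRKpath} give
\[
\max_{k\in\Z} L(\fl{n^{3/2}},k)\;\leq\;\max_{k\in\Z} L(\tau^n_{0,h^*},k)\;=\;2\s\sqrt{n}\,\sup_{y\in\R}\Lambda^n_{0,h^*}(y).
\]
I would first pick $h^*$ large enough so that $P(\mathfrak{t}_{0,h^*}\leq 1/(2\s))\leq \eta/24$ (which is possible by the scaling $\mathfrak{t}_{0,h}\overset{d}{=}h^3\mathfrak{t}_{0,1}$ derived from \eqref{TSRM-scaling}); Corollary~\ref{cor:tau-lim} then controls $P(\tau^n_{0,h^*}\leq \fl{n^{3/2}})$ in the limit by the same quantity. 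Next, Theorem~\ref{thm:marginalRK} provides the joint convergence of $\Lambda^n_{0,h^*}(\cdot)$ together with its support endpoints $\mu^{n,\pm}_{0,h^*}$ to the continuous, compactly supported limit $(\Lambda_{0,h^*}(\cdot),\mathfrak{m}^\pm_{0,h^*})$. An application of the continuous mapping theorem yields $\sup_y\Lambda^n_{0,h^*}(y)\Rightarrow \sup_y\Lambda_{0,h^*}(y)$, so choosing $K\geq h^*$ with $P(\sup_y\Lambda_{0,h^*}(y)>K)\leq \eta/24$ closes out the local-time piece.

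The main subtlety is the continuous-mapping step for $\sup_y\Lambda^n_{0,h^*}$, since this supremum is taken over all of $\R$ and the sup functional is not Skorokhod-continuous on $\mathcal{D}$ in general. This is handled via the joint convergence of the support endpoints: on the high-probability event $\{[\mu^{n,-}_{0,h^*},\mu^{n,+}_{0,h^*}]\subseteq [-M,M]\}$ for $M$ large, the supremum reduces to one over the compact interval $[-M,M]$, on which Skorokhod $J_1$ convergence to the continuous limit $\Lambda_{0,h^*}$ is uniform, making the sup functional continuous at the limit path.
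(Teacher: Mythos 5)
Your proof is correct, and it differs from the paper's mainly in the bookkeeping. The paper extracts \emph{both} bounds from the single pair of curves $\Lambda^n_{\pm K,0}$: it notes $R_{K,n}\supset R_{K,n,-}\cap R_{K,n,+}$ with $R_{K,n,\pm}=\{\tau_{\pm Kn,0}>n^{3/2}\text{ and }\max_k L(\tau_{\pm Kn,0},k)\le 2\s K\sqrt n\}$, i.e.\ it uses the same ``dominate the time-$n^{3/2}$ local times by those at a later stopping time'' trick that you use, but with the hitting times $\tau_{\pm Kn,0}$ themselves playing the role of your $\tau^n_{0,h^*}$, so no auxiliary level $h^*$ is needed; the condition $\tau_{\pm Kn,0}>n^{3/2}$ is then rewritten via \eqref{txn-integral} as a lower bound on $\int_\R\Lambda^n_{\pm K,0}$, and Theorem~\ref{thm:JRK-TSAW}, Corollary~\ref{cor:tau-lim} and Brownian scaling reduce everything to a single probability involving $\Lambda_{-1,0}$ that vanishes as $K\to\infty$. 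Your route spends one extra curve and one extra parameter, and in exchange separates the range and local-time requirements transparently; the only delicate point, the supremum of $\Lambda^n_{0,h^*}$ over all of $\R$ not being a Skorokhod-continuous functional, you handle correctly through the joint convergence of the endpoints $\mu^{n,\pm}_{0,h^*}$ (the paper faces the same issue for $\sup_x\Lambda^n_{\pm K,0}$ and resolves it by the same joint convergence, without comment). Your closing remark that small $n$ are absorbed by the deterministic bounds together with monotonicity of $R_{K,n}$ in $K$ correctly upgrades the $\limsup$ estimate to the statement as written.
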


\begin{proof}
 First of all, 
 note that $R_{K,n} \supset R_{K,n,-} \cap R_{K,n,+}$, where 
 \begin{align*}
  R_{K,n,\pm} &= \left\{ \tau_{\pm Kn,0} > n^{3/2} \text{ and } \max_{k\in \Z} L(\tau_{\pm Kn,0},k) \leq 2\s K \sqrt{n} \right\} \\
  &= \left\{ \int_\R \Lambda_{\pm K,0}^n(x) \, dx > (1 + n^{-3/2})/(2\s) \text{ and } \sup_{x \in \R} \Lambda_{\pm K,0}^n(x) \leq K \right\},
 \end{align*}
where we used \eqref{txn-integral} in the last equality.
It follows from Theorem \ref{thm:JRK-TSAW} and Corollary \ref{cor:tau-lim}, along with symmetry considerations and scaling properties of Brownian motion, that 
\begin{align*}
 \limsup_{n\to\infty} P\left( R_{K,n}^c \right)
 &\leq 2 P\left( \int_\R \Lambda_{-K,0}(x) \, dx \leq \frac{1}{2\s} \text{ or } \sup_{x \in \R} \Lambda_{-K,0}(x) > K \right) \\
 &= 2 P\left( \int_\R \Lambda_{-1,0}(x) \, dx \leq \frac{1}{2\s K^{3/2}}  \text{ or } \sup_{x \in \R} \Lambda_{-1,0}(x) > \sqrt{K} \right). 
\end{align*}
Since the last probability above can be made arbitrarily small by taking $K$ sufficiently large, the conclusion of the lemma follows easily. 
\end{proof}

For the next lemma, to make the notation easier we introduce the following definition. 
\begin{defn}
 For a finite set $\mathbf{X} \subset \R$, the gap size of $\mathbf{X}$ is 
 \[
  \gap(\mathbf{X}) = \min\{ |x-y|: \, x,y \in \mathbf{X}, \, x\neq y \}. 
 \]
%$\textrm{gap}(\mathbf{X}) = \min\{ |x-y|: \, x,y \in \mathbf{X}, \, x\neq y \}$. 
\end{defn}

We also need to introduce the following notation. Recall \eqref{tnxh} and  
for $\d>0$ and $n\geq 1$ let 
\begin{align*}
  \mathcal{T}_{\d,n} &= \left\{ \frac{ \tau^n_{x,h}}{n^{3/2}} : (x,h) \in (\delta \Z ) \times (\sqrt{\delta} \Z_+  ) \right \},  \\
 \mathcal{T}_{\d,n}^{(K)} &= \left\{ \frac{ \tau^n_{x,h}}{ n^{3/2}} : (x,h) \in (\delta \Z \cap [-K,K] ) \times (\sqrt{\delta} \Z \cap [0,K] ) \right \}, \quad\forall K\ge 1. 
\end{align*}

\begin{lemma}\label{lem:gap}
  For any fixed $K\geq 1$ and $\eta,\d>0$, there exists a
  $\d' = \delta'(K,\eta,\d) >0$ such that for $n$ sufficiently large
\[
 P\left( \gap(\mathcal{T}_{\d,n}^{(K)}) > \d' \right) > 1- \frac{\eta}{3}. 
\]
\end{lemma}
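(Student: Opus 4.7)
The plan is to apply Corollary~\ref{cor:tau-lim} to the finite index set $I := (\delta\Z \cap [-K,K]) \times (\sqrt{\delta}\Z \cap [0,K])$, whose cardinality $N := |I|$ depends only on $K$ and $\delta$. By that corollary, $\left(\tau^n_{x,h}/n^{3/2}\right)_{(x,h)\in I}$ converges weakly in $\R^N$ to $\left(2\sigma\,\mathfrak{t}_{x,h}\right)_{(x,h)\in I}$, and $\mathcal{T}^{(K)}_{\delta,n}$ is exactly the (unordered) image of the left-hand tuple. Once the entries of the limiting tuple are shown to be almost surely pairwise distinct, the conclusion will follow from continuous mapping and the Portmanteau theorem applied to the minimum pairwise-gap functional $(t_i)_{i\leq N}\mapsto \min_{i\neq j}|t_i-t_j|$, which is continuous on $\R^N$.

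The crux is therefore to show that, almost surely, $\mathfrak{t}_{x,h} \neq \mathfrak{t}_{x',h'}$ for every pair of distinct $(x,h), (x',h')\in I$. I would split into two cases. If $x\neq x'$, I use that the local time $\mathfrak{L}(\cdot,x)$ can only grow when $\mathfrak{X}$ is at $x$; combined with the continuity of $\mathfrak{X}$ and of $\mathfrak{L}(\cdot,x)$ recorded in Section~\ref{Prelim}, this forces $\mathfrak{X}(\mathfrak{t}_{x,h})=x$ almost surely, so $\mathfrak{X}(\mathfrak{t}_{x,h})=x\neq x'=\mathfrak{X}(\mathfrak{t}_{x',h'})$ and the two hitting times must differ. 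If instead $x=x'$ and $h<h'$, continuity of $\mathfrak{L}(\cdot,x)$ together with the definition of $\mathfrak{t}_{x,h}$ gives $\mathfrak{L}(\mathfrak{t}_{x,h},x) = h < h' = \mathfrak{L}(\mathfrak{t}_{x,h'},x)$, and again the hitting times differ. A union bound over the finitely many pairs in $I$ completes the claim.

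To finish, set $G_n := \min_{(x,h)\neq (x',h')\in I}\left|\tau^n_{x,h}/n^{3/2} - \tau^n_{x',h'}/n^{3/2}\right|$ and note that $G_n \leq \gap\bigl(\mathcal{T}^{(K)}_{\delta,n}\bigr)$ deterministically (the inequality is strict only when two distinct tuple entries happen to coincide, in which case $G_n=0$). Continuous mapping yields $G_n \Longrightarrow G_\infty := \min_{(x,h)\neq (x',h')\in I}|2\sigma\mathfrak{t}_{x,h} - 2\sigma\mathfrak{t}_{x',h'}|$, and the distinctness claim above gives $G_\infty>0$ almost surely. Choose $\delta'>0$ small enough that $P(G_\infty>\delta') > 1-\eta/6$; then Portmanteau applied to the open set $(\delta',\infty)$ yields
\[
P\bigl(\gap(\mathcal{T}^{(K)}_{\delta,n}) > \delta'\bigr) \;\geq\; P(G_n>\delta') \;>\; 1 - \tfrac{\eta}{3}
\]
for all $n$ sufficiently large. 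The main (minor) subtlety in the argument is the identity $\mathfrak{X}(\mathfrak{t}_{x,h})=x$ almost surely; modulo this standard fact the proof is a soft consequence of the joint convergence from Corollary~\ref{cor:tau-lim} together with continuity of the gap functional on $\R^N$.
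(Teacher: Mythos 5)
Your proposal is correct and follows essentially the same route as the paper: joint convergence of the rescaled inverse local times from Corollary~\ref{cor:tau-lim}, continuity of the minimal-gap functional, almost sure positivity of the limiting gap, and Portmanteau. The only difference is cosmetic: where the paper simply invokes that $(x,h)\mapsto\mathfrak{t}_{x,h}$ is almost surely one-to-one (a property of the T\'oth--Werner construction recalled in Section~\ref{Prelim}), you supply a short direct verification via $\mathfrak{X}(\mathfrak{t}_{x,h})=x$ and $\mathfrak{L}(\mathfrak{t}_{x,h},x)=h$, which is a valid justification of the same fact.
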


% {\color{red}(Note that we can't get this equality to hold for all $n$ by just taking a smaller $\d'$ because for $n< 1/\delta$ some of the times in $\mathcal{T}_{\d,n}^{(K)}$ can be equal.)}

\begin{proof}
 Let $\mathcal{T}_{\delta}^{(K)} = \left\{ \mathfrak{t}_{x,h}: (x,h)\in (\delta \Z \cap [-K,K]) \times (\sqrt{\delta} \Z \cap [0,K]) \right \}$. 
 It follows from
 Corollary \ref{cor:tau-lim}
 that $\gap(\mathcal{T}_{\d,n}^{(K)})$ converges in distribution to $2\s\gap(\mathcal{T}_{\delta}^{(K)})$. 
 Since the map $(x,h) \mapsto \mathfrak{t}_{x,h}$ is almost surely one-to-one, then $P\left(\gap(\mathcal{T}_{\delta}^{(K)}) > 0 \right) = 1$ and thus there exists a $\d'>0$ such that $P\left(\gap(\mathcal{T}_{\delta}^{(K)}) > \frac{\d'}{2\s} \right) \geq 1-\frac{\eta}{3}$, and the conclusion of the lemma then follows easily. 
\end{proof}

\begin{remark}\label{rem:Tgap}
Note that for $\d>0$ fixed and $n$ sufficiently large, on the event $R_{K,n}$ we have that $\tau_{\fl{xn},\fl{2\s h\sqrt{n}} } \leq n^{3/2}$ only if $(x,h) \in [-K,K] \cap [0,K]$. 
Therefore, on $R_{K,n}$ we have that $\mathcal{T}_{\d,n} \cap [0,1] = \mathcal{T}_{\d,n}^{(K)} \cap [0,1]$. In particular, we have that 
\[
 R_{K,n} \cap \left\{ \gap(\mathcal{T}_{\d,n}^{(K)} ) > \d' \right\}
 \subset \left\{ \gap( \mathcal{T}_{\d,n} \cap [0,1] ) > \d' \right\}. 
\]
\end{remark}

For $n\geq 1$ recall that $\mathcal{X}_n$ is the ${\cal D}_+$-valued stochastic process defined by $\mathcal{X}_n(t) = \frac{X_{\fl{t n^{3/2}}}}{(2\s)^{-2/3}n}$.

\begin{proposition}\label{prop:Xtight}
 The sequence of $\mathcal{D}_+$-valued random variables $(\mathcal{X}_n)_{n\geq 1}$ is tight (with respect to the Skorokhod $J_1$ topology on ${\cal D}_+$), and moreover any subsequential limit is concentrated on paths that are continuous.  
\end{proposition}

\begin{proof}
It is enough to show that for any $\e,\eta>0$ there is a $\delta'>0$ such that 
\begin{equation}\label{tightcond}
 \limsup_{n\to\infty} P\Bigg( \max_{\substack{k,\ell\leq n^{3/2} \\ |k-\ell|\leq \delta' n^{3/2}}} |X_k - X_\ell| > \e n \Bigg) 
 < \eta. 
\end{equation}
To this end, we will later show that we can choose parameters $K,\d,$
and $\d'$ such that the conclusions to Lemmas \ref{lem:Krange} and
\ref{lem:gap} hold.  Thus, we will only need to control the
fluctuations of the path of the walk on the event
$R_{K,n} \cap \{ \gap(\mathcal{T}_{\d,n}^{(K)}) > \d' \}$.  For any
times $k<\ell\leq n^{3/2}$ with $|k-\ell|\leq \d' n^{3/2}$, it follows
from Remark \ref{rem:Tgap} that on the event
$R_{K,n} \cap \{ \gap(\mathcal{T}_{\d,n}^{(K)}) > \d' \}$ there are
times $\tau,\tau' \in \mathcal{T}_{\d,n}^{(K)}$ and
$\tau'' \in \mathcal{T}_{\d,n}$ that are ``consecutive'' in the set
$\mathcal{T}_{\d,n}$ in that
$\mathcal{T}_{\d,n} \cap \left((\tau,\tau')\cup(\tau',\tau'')\right) =
\emptyset$ and such that either (i)
$\tau n^{3/2}\leq k < \ell < \tau' n^{3/2}$ or (ii)
$ \tau n^{3/2} \leq k < \tau' n^{3/2}\le \ell < \tau''n^{3/2}$.
If one also has that $|X_k-X_\ell| > \e n$ then this implies that
either there is a $j \in (\tau n^{3/2},\tau' n^{3/2})$ with
$|X_j-X_{\tau n^{3/2}}| > \e n/4$ or there is a
$j \in (\tau'n^{3/2},\tau''n^{3/2})$ with
$|X_j-X_{\tau'n^{3/2}}|>\e n/4$. Indeed, if
  $|X_k-X_\ell|>\epsilon n$ and (i) is true then by the triangle
  inequality 
  $\max\{|X_\ell-X_{\tau n^{3/2}}|,|X_k-X_{\tau n^{3/2}}|\}>\epsilon
  n/2>\epsilon n/4$. If $|X_k-X_\ell|>\epsilon n$ and (ii) is true
  then
  \[\epsilon n<|X_k-X_\ell|\le |X_k-X_{\tau n^{3/2}}|+|X_{\tau
      n^{3/2}}-X_{\tau' n^{3/2}-1}|+1+|X_{\tau' n^{3/2}}-X_\ell|,\] so at least one of the terms on the right should be larger that $\epsilon n/4$.

Thus, for all sufficiently large $n$ we have that 
\begin{equation}\label{Hunion}
  \Bigg\{ \max_{\substack{k,\ell\leq n^{3/2} \\ |k-\ell|\leq \delta' n^{3/2}}} |X_k - X_\ell| > \e n \Bigg\} \cap R_{K,n} \cap \left\{ \gap(\mathcal{T}_{\d,n}^{(K)}) > \d' \right\} 
  \subset
  \bigcup_{\substack{x \in \d \Z \cap [-K,K]\\h \in \sqrt{\d}\Z \cap [0,K]}} A^{(K)}_{x,h,\d,\e,n}, 
\end{equation}
where 
\begin{align*}
 A^{(K)}_{x,h,\d,\e,n} 
 &= \left\{ \exists j> \tau^n_{x,h}  : \, \left|X_j - X_{\tau^n_{x,h}} \right| > \frac{\e}{4} n \text{ and } 
 \mathcal{T}_{\d,n} \cap \left( \frac{\tau^n_{x,h}}{n^{3/2}}, \frac{j}{n^{3/2}} \right) = \emptyset 
 \right\} \\
 &\makebox[5cm]{\ }\cap \left\{ \max_{i \in \Z}  L(\tau^n_{x,h}, i ) \leq 2\s K \sqrt{n} \right\}.
\end{align*}
We claim that there exists a constant $c_0>0$ which does not depend on $x,h,\d$ or $\e$ such that for all $K>0$, $x \in \d \Z \cap [-K,K]$, $h \in \sqrt{\d}\Z \cap [0,K]$, and $n$ sufficiently large we have  
%$P(A^{(K)}_{x,h,\d,\e,n}) \leq 2 (1-c_0)^{\frac{\e}{4\d}}$. 
\begin{equation}\label{PHbound}
 P(A^{(K)}_{x,h,\d,\e,n}) \leq 2 (1-c_0)^{\fl{\frac{\e}{5\d}}}. 
\end{equation}
Postponing the proof of \eqref{PHbound} for now, we will show how to use this to finish the proof of \eqref{tightcond}. 
Since there are at most $(2K/\delta+1)(K/\sqrt{\d}+1) \leq \frac{6K^2}{\d^{3/2}}$ indices $(x,h)$ in the union on the right side of \eqref{Hunion}, it follows from \eqref{Hunion} and \eqref{PHbound}
that  for $n$ sufficiently large we have
\begin{align*}
 P\Bigg( \max_{\substack{k,\ell\leq n^{3/2} \\ |k-\ell|\leq \delta' n^{3/2}}} |X_k - X_\ell| > \e n \Bigg)
\leq P(R_{K,n}^c) + P\left( \gap(\mathcal{T}_{\d,n}^{(K)} ) \leq \d' \right) 
 + \frac{12 K^2}{\d^{3/2}}(1-c_0)^{\fl{\frac{\e}{5\d}}}.
\end{align*}
Given $\e,\eta>0$ we can first choose $K$ large enough (depending only on $\eta$) so that $P(R_{K,n}^c) \leq \frac{\eta}{3}$. Next, we choose $\d>0$ small enough (depending on $\e$ and $K$) so that $\frac{12 K^2}{\d^{3/2}}(1-c_0)^{\fl{\frac{\e}{5\d}}} < \frac{\eta}{3}$, and finally we choose $\d'>0$ depending on $K, \eta$ and $\d$ as in Lemma \ref{lem:gap} so that $P( \gap(\mathcal{T}_{\d,n}^{(K)} \leq \d' ) \leq \frac{\eta}{3}$. 
This completes the proof of \eqref{tightcond}, pending the proof of \eqref{PHbound}. 

We now return to the proof of \eqref{PHbound}. 
The event $A^{(K)}_{x,h,\d,\e,n}$ involves a long excursion either to the right or left of $\fl{xn}$. To simplify things we will consider an event which only has long excursions to the right; the corresponding event with long excursions to the left is handled similarly. 
More precisely, letting  
\begin{align*}
 A^{(K),+}_{x,h,\d,\e,n} &= \left\{ \exists j> \tau_{x,h}^n: \, X_j  > \fl{xn} + \frac{\e}{4} n \text{ and } 
 \mathcal{T}_{\d,n} \cap \left(  \frac{\tau^n_{x,h}}{n^{3/2}}, \frac{j}{n^{3/2}} \right) = \emptyset 
 \right\} \\
 &\makebox[5.3cm]{\ } \cap \left\{ \max_{i \in \Z}  L( \tau_{x,h}^n, i ) \leq 2\s K \sqrt{n} \right\}, 
\end{align*}
%and 
%\[
% A^{(K)}_{x,h,\d,\e,n}^- = \left\{ \exists j> \tau_{\fl{xn},\fl{h\sqrt{n}}} : \, X_j < \fl{xn} - \frac{\e}{4} n \text{ and } 
% \mathcal{T}_{\d,n} \cap \left( \tau_{\fl{xn},\fl{h\sqrt{n}}},j \right) = \emptyset 
% \right\}, 
%\]
we will show that there is a $c_0>0$ such that for all $K>0$,
$x \in \d \Z \cap [-K,K]$, $h \in \sqrt{\d}\Z \cap [0,K]$, and $n$ large
%$P(A^{(K)}_{x,h,\d,\e,n}^\pm) \leq (1-c_0)^{\frac{\e}{5\d}}$. 
\begin{equation}\label{PH+bound}
 P(A^{(K),+}_{x,h,\d,\e,n}) \leq (1-c_0)^{\fl{\frac{\e}{5\d}}}. 
\end{equation}

To prove \eqref{PH+bound}, the key observation is that, on the
  event $A_{x,h,\d,\e,n}^{(K),+}$, for any
  $(x',h') \in (\d \Z)\times (\sqrt{\d}\Z_+)$ with
  $\tau_{x',h'}^n > \tau_{x,h}^n$ we must have that
  $\mu_{(x,h),(x',h')}^{n,+} > \frac{\fl{xn}}{n} + \frac{\e}{4}$
  (since $n \mu_{(x,h),(x',h')}^{n,+}$ is the maximum of the
    walk between times
  $\tau_{x,h}^n$ and $\tau_{x',h'}^n$) and thus
  $\Lambda_{x,h}^n(y) < \Lambda_{x',h'}^n(y)$ for all
  $\fl{xn}\leq \fl{yn} \leq \fl{xn} + \frac{\e n}{4}$.
%$n \mu_{(x,h),(x',h')}^{n,+} = \max\{ X_i: \, \tau_{x,h}^n \wedge \tau_{x',h'}^n \leq i \leq \tau_{x,h}^n \vee \tau_{x',h'}^n \}$
In particular, for any $k \in \Z$ with $\fl{xn} \leq \fl{k\d n} < \fl{(k+1)\d n} \leq \fl{xn}+\frac{\e n}{4}$,  let $\ell_k$ be the non-negative integer such that $\tau_{k\d,\ell_k\sqrt{\d}}^n \leq \tau_{x,h}^n < \tau_{k\d, (\ell_k+1)\sqrt{\d}}^n$, or equivalently 
\[
 \Lambda_{k\d,\ell_k\sqrt{\d}}^n(k\d)
 %\frac{\fl{2\s \ell_k \sqrt{\d n}}+1}{2\s \sqrt{n}} 
 \leq \Lambda_{x,h}^n(k \delta) < 
 %\frac{\fl{2\s (\ell_k+1)\sqrt{\d n}}+1}{2\s \sqrt{n}}.  
 \Lambda_{k\d,(\ell_k+1)\sqrt{\d}}^n(k\d).
 \]
(Note also that the definition of $A_{x,h,\d,\e,n}^{(K),+}$ implies that $\ell_k \sqrt{\d} \leq K$.)
Then on the event $A_{x,h,\d,\e,n}^{(K),+}$ we have that 
\[
 \mu_{(k\d,\ell_k\sqrt{\d}),(k\d,(\ell_k+1)\sqrt{\d})}^{n,+} 
 \geq \mu_{(x,h),(k\d,(\ell_k+1)\sqrt{\d})}^{n,+}
 \geq \frac{\fl{xn}}{n} + \frac{\e}{4} 
 \geq \frac{\fl{(k+1)\d n}}{n}. 
\]
(See Figure \ref{fig:snake} for a visual depiction of this.)

\begin{figure}[h]
  \includegraphics[scale=0.75]{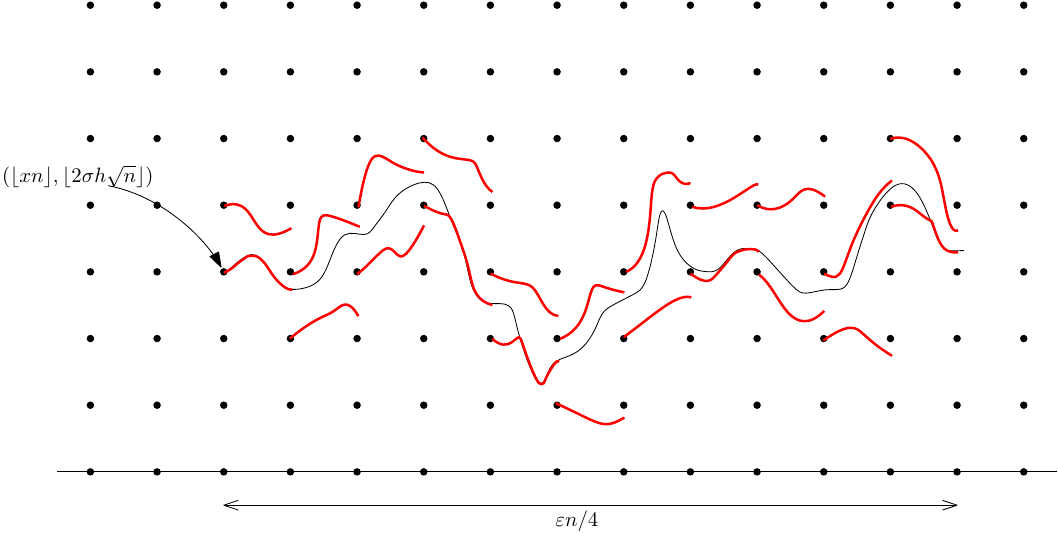}
  \caption{A visual depiction of the event $A^{(K),+}_{x,h,\d,\e,n}$. The black curve in the middle depicts $\Lambda_{x,h}^n(\cdot)$. The short red curves depict pieces of the curves $\Lambda_{k\d,\ell_k \sqrt{\d}}^n(\cdot)$ and $\Lambda_{k\d,(\ell_k+1)\sqrt{\d}}^n(\cdot)$ which on the event $A^{(K),+}_{x,h,\d,\e,n}$ do not meet.}\label{fig:snake}
\end{figure}

Therefore, if we define
%\[
% S_{x,h,k}^n = \bigcup_{0\leq \ell \leq \frac{K}{\sqrt{\d}} } \left\{ \Lambda_{k\d,\ell\sqrt{\d}}^n(k\d) \leq \Lambda_{x,h}^n(k \delta) < \Lambda_{k\d,(\ell+1)\sqrt{\d}}^n(k\d), \, 
%  \mu_{(k\d,\ell\sqrt{\d}),(k\d,(\ell+1)\sqrt{\d})}^{n,+} \geq \frac{\fl{(k+1)\d n}}{n} \right\}, 
%\]
%and 
\[
 S_{x,h,k} = \bigcup_{0\leq \ell \leq \frac{K}{\sqrt{\d}} } \left\{ \ell \sqrt{\d} \leq \Lambda_{x,h}(k \delta) < (\ell+1)\sqrt{\d}, \, 
  \mathfrak{m}_{(k\d,\ell\sqrt{\d}),(k\d,(\ell+1)\sqrt{\d})}^{+} \geq (k+1)\d \right\},
\]
then it follows from the Joint GRKT (Theorem \ref{thm:JRK-TSAW}) that 
\begin{equation}\label{PA+bound}
\limsup_{n\to\infty} P\left( A_{x,h,\d,\e,n}^{(K),+} \right)
\leq 
%\lim_{n\to\infty} P\left( \bigcap_{k=\frac{x}{\d}}^{\frac{x}{\d} + \fl{\frac{\e}{5\d}}} S_{x,h,k}^n \right) 
%= 
P\left( \bigcap_{k=\frac{x}{\d}}^{\frac{x}{\d} + \fl{\frac{\e}{5\d}}} S_{x,h,k} \right).
\end{equation}
Note that if we fix some $k_0 > x/\delta$ then the event $\bigcap_{k=\frac{x}{\d}}^{k_0-1}  S_{x,h,k}$ 
contains information about $\Lambda_{x',h'}(y)$ for finitely many pairs $(x',h') \in (\d\Z) \times (\sqrt{\d}\Z_+)$ and $y \in [x', k_0\delta]$. 
On the other hand, the only part of the event $S_{x,h,k_0}$ that depends on this information concerns the value of $\Lambda_{x,h}(k_0\d)$. Therefore, it follows from the Markov property that 
\begin{align*}
 P\left( S_{x,h,k_0} \, \biggl| \, \bigcap_{k=\frac{x}{\d}}^{k_0-1} S_{x,h,k} \right) 
 &= \sum_{0\leq \ell \leq \frac{K}{\sqrt{\d}} }
 P\left( \ell \sqrt{\d} \leq \Lambda_{x,h}(k_0\d) < (\ell+1)\sqrt{\d} \, \biggl| \, \bigcap_{k=\frac{x}{\d}}^{k_0-1} S_{x,h,k} \right) \\
 &\qquad\qquad\qquad\qquad \times P\left( \mathfrak{m}_{(k\d,\ell\sqrt{\d}),(k\d,(\ell+1)\sqrt{\d})}^{+} \geq (k+1)\d \right) \\ 
 &\leq \max_{\ell \leq \frac{K}{\sqrt{\d}} } P\left( \mathfrak{m}_{(k\d,\ell\sqrt{\d}),(k\d,(\ell+1)\sqrt{\d})}^{+} \geq (k+1)\d \right) \\
 &= \max_{\ell \leq \frac{K}{\sqrt{\d}} } P\left( \mathfrak{m}_{(k,\ell),(k,\ell+1)}^{+} \geq k+1 \right), 
\end{align*}
where the last equality follows from the scaling property of Brownian
motion.  
  Repeatedly applying this we get that
\begin{equation}\label{PcapSbound}
 P\left( \bigcap_{k=\frac{x}{\d}}^{\frac{x}{\d} + \fl{\frac{\e}{5\d}}} S_{x,h,k} \right)
 \leq \left\{ \sup_{k\in\Z} \sup_{\ell \in \Z_+} P\left( \mathfrak{m}_{(k,\ell),(k,\ell+1)}^{+} \geq k+1 \right) \right\}^{\left\lfloor \frac{\e}{5\d} \right\rfloor }
\end{equation}
By the properties of the construction of
the reflected/absorbed Brownian motions we have that
\[
 P\left( \mathfrak{m}_{(k,\ell),(k,\ell+1)}^{+} \geq k+1 \right)
 = \begin{cases}
    P\left( \mathfrak{m}_{(-1,\ell),(-1,\ell+1)}^{+} \geq 0 \right) & \text{if } k\leq -1 \\
    P\left( \mathfrak{m}_{(0,\ell),(0,\ell+1)}^{+} \geq 1 \right) & \text{if } k\geq 0. 
   \end{cases}
\]
All the probabilities on the right are less than 1 and as
$\ell \to \infty$ they converge to the probability that two
  independent Brownian motions started a distance 1 apart do not meet
  in 1 unit of time, which is also strictly less than 1.  Thus, it follows that there is a $c_0>0$ such that the supremum inside the braces in \eqref{PcapSbound} is less than $1-c_0$. Finally, it follows from \eqref{PA+bound} and \eqref{PcapSbound} that \eqref{PH+bound} holds for $n$ sufficiently large. 
\end{proof}

\begin{proposition}\label{prop:Htight}
 The sequence of $\mathcal{D}_+$-valued random variables $(\mathcal{H}_n)_{n\geq 1}$ is tight (with respect to the Skorokhod $J_1$ topology on ${\cal D}_+$), and moreover any subsequential limit is concentrated on paths that are continuous.  
\end{proposition}

\begin{proof}
  Recall \eqref{XH} and the notation $H_n=L(n,X_n)$. The idea of the
  proof is rather simple. Since Proposition \ref{prop:Xtight} implies
  that the walk doesn't move a large distance in a short time, in
  order for the local time process $(H_n)_{n\in\Z_+}$ to
 change by much in a short time either (1) the local time profile
  has to have a large fluctuation in a short distance or (2) the
  local time at a fixed location should increase very rapidly in
  a short amount of time. We will use the joint GRKT, Theorem
    \ref{thm:JRK-TSAW}, to show that both of these are unlikely to
  occur.

  It is enough to show that for
  any $\e, \eta>0$ there is a $\d'>0$ such that
\begin{equation}\label{tightcond-H}
 \limsup_{n\to\infty} P\Bigg( \max_{\substack{k,\ell\leq n^{3/2} \\ |k-\ell|\leq \delta' n^{3/2}}} |H_k - H_\ell| > \e \sqrt{n} \Bigg) 
 < \eta. 
\end{equation}
We note that similarly to \eqref{Hunion} 
\begin{equation}\label{Gunion}
  \Bigg\{ \max_{\substack{k,\ell\leq n^{3/2} \\ |k-\ell|\leq \delta' n^{3/2}}} |H_k - H_\ell| \geq \e \sqrt{n}  \Bigg\} 
  \cap R_{K,n} \cap \left\{ \gap(\mathcal{T}_{\d,n}^{(K)}) > \d' \right\}  \subset
  \bigcup_{\substack{x \in \d \Z \cap [-K,K]\\h \in \sqrt{\d}\Z \cap [0,K]}} G^{(K)}_{x,h,\d,\e,n}, 
\end{equation}
where 
\begin{align*}
 G^{(K)}_{x,h,\d,\e,n} &= \left\{ \exists j > \tau_{x,h}^n: \, 
 \left| H_j - H_{\tau_{x,h}^n} \right| > \frac{\e}{4} \sqrt{n} \text{ and }  \mathcal{T}_{\d,n} \cap \left( \frac{\tau_{x,h}^n}{n^{3/2}}, \frac{j}{n^{3/2}} \right) = \emptyset \right\}
  \\
 &\makebox[5cm]{\ }\cap \left\{ \max_{i \in \Z}  L(\tau^n_{x,h} , i ) \leq 2\s K \sqrt{n} \right\}.
\end{align*}
To bound $P(G^{(K)}_{x,h,\d,\e,n})$ we will fix an $M\geq 1$ (to be chosen later) and then use \eqref{PHbound} to conclude that for $n$ sufficiently large we have 
\begin{equation}\label{PG-PGH}
 P(G^{(K)}_{x,h,\d,\e,n}) 
 \leq 2(1-c_0)^{\fl{\frac{4\e^2}{5\d M}}} + P\left( G^{(K)}_{x,h,\d,\e,n} \setminus A^{(K)}_{x,h,\d,\frac{4\e^2}{M},n} \right). 
\end{equation}
To bound the last probability, note that on the event $G^{(K)}_{x,h,\d,\e,n}\setminus  A^{(K)}_{x,h,\d,\frac{4\e^2}{M},n}$ 
not only does the local time at the present location change by at least $\frac{\e}{4}\sqrt{n}$ before the next stopping time of the form $\tau_{x',h'}^n $ with $(x',h') \in \d \Z \times \sqrt{\d} \Z_+$, but it does so without moving a distance of more than $\frac{\e^2}{M} n$.  
Now, since 
\begin{align*}
 \left| H_j - H_{\tau_{x,h}^n} \right| 
 &= \left| L(j,X_j) - L\left(\tau_{x,h}^n, \fl{xn} \right) \right| \\
 &\leq  \left| L\left(j,X_j \right) - L(\tau_{x,h}^n,X_j) \right|
 + \left| L(\tau_{x,h}^n,X_j) - L\left(\tau_{x,h}^n, \fl{xn} \right) \right|, %\\
 %&= \left|  L(\tau_{x,h}^n,X_j) - L\left(j,X_j \right) \right|
 %+ 2\s \sqrt{n} \left| \Lambda_{x,h}^n\left( \frac{X_j}{n} \right) - \Lambda_{x,h}^n(x)  \right|, 
\end{align*}
we see that if the event $G^{(K)}_{x,h,\d,\e,n} \setminus A^{(K)}_{x,h,\d,\frac{4\e^2}{M},n}$ occurs, then either 
\begin{enumerate}[label=(\roman*)]
 \item there is an integer $k$ 
with $|k-\fl{xn}| \leq n(\e^2/M) $ such that $|L(\tau_{x,h}^n, k) - L(\tau_{x,h}^n, \fl{xn}) | \geq \frac{\e}{8}\sqrt{n}$, or equivalently, 
%there is a $y \in \R$ with $|y-x| \leq \frac{\e^2}{M}$ such that $|\Lambda_{x,h}^n(y) - \Lambda_{x,h}^n(x) | \geq \frac{\e}{16\s}$,
$\sup\limits_{y: |y-x| \leq \e^2/M} |\Lambda_{x,h}^n(y) - \Lambda_{x,h}^n(x) | \geq \frac{\e}{16\s}$,
 \item or there is a time $j > \tau_{x,h}^n$ such that  $|L(j,X_j) - L(\tau_{x,h}^n,X_j)| \geq \frac{\e}{8}\sqrt{n}$ and such that  there are no stopping times $\tau_{x',h'}^n \in (\tau_{x,h}^n, j)$ with $(x',h') \in \d \Z \times \sqrt{\d}\Z_+$.
\end{enumerate}
In the second case, 
let $k \in \Z$ be such that
if $X_j \geq 0$ then $\fl{k\d n} \leq X_j < \fl{(k+1)\d n}$ or if $X_j < 0$ then $\fl{(k-1)\d n} < X_j \leq \fl{k\d n}$, and then let $\ell \in \Z_+$ be such that 
$\tau_{k\d,\ell\sqrt{\d}}^n \leq \tau_{x,h}^n < j < \tau_{k\d,(\ell+1)\sqrt{\d}}^n$.
Moreover, note that the conditions in the event $G^{(K)}_{x,h,\d,\e,n} \setminus A^{(K)}_{x,h,\d,\frac{4\e^2}{M},n}$ ensure that $|k| \leq (K+\frac{\e^2}{M})/\d \leq 2K/\delta$ and $\ell \leq K/\sqrt{\d}$. 
Then, it follows that
\[
 L\left(\tau_{k\d,(\ell+1)\sqrt{\d}}^n, X_j \right) - L\left(\tau_{k\d,\ell\sqrt{\d}}^n, X_j \right) 
 \geq L(j,X_j) - L(\tau_{x,h}^n,X_j) \geq \frac{\e}{8}\sqrt{n}. 
\]
If $X_j\geq 0$ then this implies that 
$\sup_{y\in [k\d,(k+1)\d)} \Lambda_{k\d,(\ell+1)\sqrt{\d}}^n(y) - \Lambda_{k\d,\ell\sqrt{\d}}^n(y) \geq \frac{\e}{16\s}$ while if $X_j < 0$ then it implies that 
$\sup_{y\in ((k-1)\d,k\d]} \Lambda_{k\d,(\ell+1)\sqrt{\d}}^n(y) - \Lambda_{k\d,\ell\sqrt{\d}}^n(y) \geq \frac{\e}{16\s}$.
Therefore, it follows from the above consideration together with  Theorem \ref{thm:JRK-TSAW} that 
\begin{align}
 &\limsup_{n\to\infty} P\left( G^{(K)}_{x,h,\d,\e,n} \setminus A^{(K)}_{x,h,\d,\frac{4\e^2}{M},n} \right) \leq  P\left( \sup_{y: |y-x| \leq \frac{\e^2}{M}} |\Lambda_{x,h}(y) - h | \geq \frac{\e}{16\s} \right) \label{Bigchange} \\
 &\quad + \frac{12 K^2}{\d^{3/2}} \max_{\substack{0\leq k \d \leq K+\frac{\e^2}{M} \\ 0\leq \ell\sqrt{\d} \leq K }} P\left( \sup_{y\in [k\d, (k+1)\d)} \Lambda_{k\d, (\ell+1)\sqrt{\d}}(y) - \Lambda_{k\d, \ell\sqrt{\d}}(y) \geq \frac{\e}{16\s} \right).  \label{Fastchange}
\end{align}
Recalling that the curves $\Lambda_{x,h}(\cdot)$ are reflected/absorbed Brownian motions, it follows that the second probability in \eqref{Bigchange} can be bounded in terms of a Brownian motion $B(\cdot)$:
\begin{equation*}
 P\left( \sup_{y: |y-x| \leq \frac{\e^2}{M}} |\Lambda_{x,h}(y) - h | \geq \frac{\e}{16\s} \right)
 \leq 4P\left( \sup_{0\leq t\leq \frac{\e^2}{M}} B(t) \geq \frac{\e}{16\s} \right) 
 = 8 \overline\Phi\left( \frac{\sqrt{M}}{16\s} \right).
\end{equation*}
Similarly, for the probabilities in \eqref{Fastchange}, since $\Lambda_{k\d,\ell\sqrt{\d}}(\cdot)$ and $\Lambda_{k\d,(\ell+1)\sqrt{\d}}(\cdot)$ are independent coalescing, reflected/absorbed Brownian motions then it follows that (as long as $\sqrt{\d} < \frac{\e}{16\s}$) 
\begin{multline*}
 P\left( \sup_{y\in [k\d, (k+1)\d)} \Lambda_{k\d, (\ell+1)\sqrt{\d}}(y) - \Lambda_{k\d, \ell\sqrt{\d}}(y) \geq \frac{\e}{16\s} \right)
\\ \leq P\left( \sup_{0\leq t < \d} \left(\sqrt{\d} + \sqrt{2} B(t)\right) \geq \frac{\e}{16\s} \right) 
= 2 \overline\Phi\left(\frac{1}{\sqrt{2}} \left( \frac{\e}{16\s \sqrt{\d}} - 1 \right) \right). 
\end{multline*}
Combining these estimates with \eqref{Gunion}, \eqref{PG-PGH}, \eqref{Bigchange} and \eqref{Fastchange} we get that 
\begin{align}
  \limsup_{n\to\infty} &P\left( \max_{\substack{k,\ell\leq n^{3/2} \\ |k-\ell|\leq \delta' n^{3/2}}} |H_k - H_\ell| > \e \sqrt{n} \right) \leq \limsup_{n\to\infty} \left( P(R_{K,n}^c) + P\left( \gap(\mathcal{T}_{\d,n}^{(K)} ) \leq \d' \right) \right)  \nonumber\\
&\qquad + \frac{6 K^2}{\d^{3/2}} \left\{ 2(1-c_0)^{\fl{\frac{4\e^2}{5\d M}}} + 8 \overline\Phi\left( \frac{\sqrt{M}}{16\s} \right) +  \frac{24 K^2}{\d^{3/2}}  \overline\Phi\left( \frac{1}{\sqrt{2}} \left( \frac{\e}{16\s \sqrt{\d}} - 1 \right) \right) \right\} .  \label{KMd}
\end{align}
Finally, we show how the parameters $K$, $M$, $\d$, and $\d'$ can be carefully chosen to make the right side smaller than $\eta$. 
\begin{enumerate}[label=(\arabic*), nosep]
 \item First, choose $K = K(\eta)$ as in  Lemma \ref{lem:Krange}. 
 \item Next, choose $M\geq 1$ and $\d>0$ (depending on $K,\e,$ and $\eta$) so that \eqref{KMd} is less than $\eta/3$.  For instance, this can be done by letting $M=\d^{-1/2}$ and letting $\d>0$ be sufficiently small. 
 \item Finally, choose $\d'>0$ (depending on $K$ and $\d$) as in Lemma \ref{lem:gap}. 
\end{enumerate}
\end{proof}

\appendix

\section{Associated urn processes}\label{sec:urn}

In this section we describe generalized Polya urn processes which were
originally introduced in \cite{tTSAW} to study the TSAW. In
particular, we recall how these urn processes are connected with
the processes of directed edge local times that play an important role
in the proof of the joint GRKT for the TSAW (see
Section~\ref{sec:JRK}).
With the exception of Proposition~\ref{coup} for $N>1$, the content of
this section is a proper subset of \cite[Sections 2 and
3]{tTSAW}. We only set up the notation and review several basic facts
that are needed for the proof of Proposition~\ref{coup} with $N>1$,
which is given at the end of the section. We shall return to this
framework in Appendix~\ref{sec:overshoot}.

Consider a generalization of the Polya urn process where if there are
$r$ red balls and $b$ blue balls, then instead of drawing the balls
uniformly at random the next ball drawn is red (resp., blue) with
probability
$\frac{\l^{2r-1}}{\l^{2b} + \l^{2r-1}} = \frac{\l^{2(r-b)-1}}{1 +
  \l^{2(r-b)-1}}$ (resp. $\frac{1}{1+\l^{2(r-b)-1}}$) and then that
ball is replaced in the urn along with another ball of the same
color. Let $\mathfrak{R}_j$ and $\mathfrak{B}_j$ be the number of red
and blue balls, respectively, in the urn after $j$ draws, and let
$\mathfrak{D}_j = \mathfrak{R}_j-\mathfrak{B}_j$ be the discrepancy
between the number of red and blue balls. Since the probability of
drawing a red/blue ball depends only on the discrepancy, it is easy to
see that the discrepancy process $(\mathfrak{D}_j)_{j\geq 0}$ is a
Markov chain with transition probabilities
\[
 P(\mathfrak{D}_1 = i+1 \mid \mathfrak{D}_0 = i) = 1 - P(\mathfrak{D}_1 = i-1 \mid \mathfrak{D}_0 = i)
 = \frac{\l^{2i-1}}{1+\l^{2i-1}}, \qquad \forall i \in \Z. 
\]

This generalized urn process is connected with the TSAW as follows. If
we fix a site $k \in \Z\setminus \{0\}$, then the sequence of
right/left steps from $k$ on subsequent visits to $k$ has the same
distribution as the urn discrepancy process
$(\mathfrak{D}_j)_{j\geq 0}$ (where a draw of a red ball corresponds
to a step to the right and a blue ball corresponds to a step to the
left) if we use the initial condition $\mathfrak{D}_0 = 0$ for sites
$k\geq 1$ and the initial condition $\mathfrak{D}_0 = 1$ for sites
$k\leq -1$.  Similarly, the sequence of right/left steps from the
origin correspond to a slightly different urn process
$(\widetilde{\mathfrak{R}}_j, \widetilde{\mathfrak{B}}_j), j\geq
  0$, where the discrepancy
$\widetilde{\mathfrak{D}}_j = \widetilde{\mathfrak{R}}_j -
\widetilde{\mathfrak{B}}_j$ has transition probabilities given by
\[
 P(\widetilde{\mathfrak{D}}_1 = i+1 \mid \widetilde{\mathfrak{D}}_0 = i) = 1 - P(\widetilde{\mathfrak{D}}_1 = i-1 \mid \widetilde{\mathfrak{D}}_0 = i)
 = \frac{\l^{2i}}{1+\l^{2i}}, \qquad \forall i \in \Z, 
\]
and initial condition $\widetilde{\mathfrak{D}}_0 = 0$. 
Moreover, since transition probabilities for the TSAW only depend on the number of left and right steps from the current location, it follows that one can generate all of the steps of the TSAW at each site by running independent copies of the appropriate urn discrepancy process at each site.

Note that the distribution of the sequence of red/blue draws for the urn process $(\mathfrak{R}_j, \mathfrak{B}_j), j\geq 0$, depends only on the initial discrepancy $\mathfrak{D}_0$ and not on the particular values of $\mathfrak{R}_0$ and $\mathfrak{B}_0$.  Thus, below we will often use the notation $P_i(\cdot) = P(\cdot \mid \mathfrak{D}_0 = i)$ for probabilities involving the urn process $(\mathfrak{R}_j, \mathfrak{B}_j), j\geq 0$, started with discrepancy $\mathfrak{D}_0 = i$.

\subsection{Connection with directed edge local times}
In this subsection we will explain how the distribution of the Markov
chain of directed edge local times
$(\mathcal{E}_{\tau_{k,m}}^+(\ell))_{\ell\geq k}$ which appears in the
proof of the joint GRKT is related to the urn processes described
above.

Let 
\[
 \beta_i = \inf\{ j\geq 1: \, \mathfrak{B}_j-\mathfrak{B}_0 = i \},\quad \t{\beta}_i = \inf\{ j\geq 1: \, \widetilde{\mathfrak{B}}_j-\widetilde{\mathfrak{B}}_0 = i \}
\]
be the number of draws for the urn process $(\mathfrak{R}_j, \mathfrak{B}_j),j\geq 0$, (resp.\ $(\widetilde{\mathfrak{R}}_j, \widetilde{\mathfrak{B}}_j),j\geq 0$) until the $i$-th blue ball is drawn.

\begin{lemma}\label{lem:blp-transitions}
 For any $k\in\Z$ and $m\geq 0$, $\{\mathcal{E}^+_{\tau_{k,m}} (\ell) \}_{\ell\geq k}$ is a (non-homogeneous) Markov chain with initial distribution supported on $\{0,1,\ldots,m\}$ so that
 \begin{equation}\label{blp-ic}
  \forall j\in\{0,1,\ldots,m\}\qquad P( \mathcal{E}^+_{\tau_{k,m}}(k) = j ) 
  = \begin{cases}
     P_1\left( \mathfrak{D}_m = 2j-m+1 \right) & \text{if } k\leq -1,\\
     P_0\left( \widetilde{\mathfrak{D}}_m = 2j-m \right) & \text{if } k = 0,\\
     P_0\left( \mathfrak{D}_m = 2j-m \right) & \text{if } k\geq 1,
    \end{cases}
    %\qquad \text{for } 0\leq j\leq m, 
 \end{equation}
 and with transition probabilities given by 
 \begin{equation}\label{blp-tp}
 P\left( \mathcal{E}_{\tau_{k,m}}^+(\ell) = i+x \mid \mathcal{E}_{\tau_{k,m}}^+(\ell-1) = i \right) 
 = \begin{cases}
    P_1\left( \mathfrak{D}_{\beta_{i+1}} = x  \right) & \text{if } \ell \leq -1, \\
    P_0\left( \widetilde{\mathfrak{D}}_{\tilde\beta_{i+1}} = x-1  \right) & \text{if } \ell = 0,\\
    P_0\left( \mathfrak{D}_{\beta_{i}} = x \right) & \text{if } \ell \geq 1. 
   \end{cases}
\end{equation}
\end{lemma}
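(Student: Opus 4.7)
The plan is to exploit the independent-urn representation recalled in the preceding paragraphs. Attach to each site $\ell \in \Z$ its own urn — initial discrepancy $0$ for $\ell \geq 1$, the special $\widetilde{\mathfrak{D}}$-urn at $\ell = 0$, and initial discrepancy $1$ for $\ell \leq -1$ — and let the walker consume the next draw from urn $\ell$ each time it steps away from $\ell$. Under this coupling, $\mathcal{E}^+_{\tau_{k,m}}(\ell)$ is precisely the number of red draws among the first $j_\ell$ draws at urn $\ell$, where $j_\ell$ is the total number of steps the walker takes from $\ell$ by time $\tau_{k,m}$.

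The first step is an edge-crossing balance for $\ell > k$:
\[ \mathcal{E}^-_{\tau_{k,m}}(\ell) \;=\; \mathcal{E}^+_{\tau_{k,m}}(\ell-1) + c(k,\ell), \qquad c(k,\ell) = \begin{cases} 0 & \ell \geq 1, \\ 1 & \ell \leq 0. \end{cases} \]
This follows by counting signed crossings of the edge $\{\ell-1,\ell\}$ using $X_0 = 0$ and $X_{\tau_{k,m}} = k$: the walker's net displacement across this edge is $\pm 1$ exactly when $\{\ell-1,\ell\}$ separates $0$ from $k$, which under $\ell > k$ happens precisely when $\ell \leq 0$. Setting $i := \mathcal{E}^+_{\tau_{k,m}}(\ell-1)$, this identity pins down the total number of blue draws consumed at urn $\ell$ as $i + c(k,\ell)$, so $j_\ell = \beta_{i+c(k,\ell)}$ (resp.\ $\tilde\beta_{i+1}$ when $\ell = 0$) — a stopping time measurable with respect to urn $\ell$ alone.

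Converting red counts to discrepancies via $\mathcal{E}^+_{\tau_{k,m}}(\ell) = (i + c) + \mathfrak{D}_{\beta_{i+c}} - \mathfrak{D}_0$ then reproduces the three cases of \eqref{blp-tp}: for $\ell \geq 1$, $\mathfrak{D}_0 = 0$ and $c = 0$ give $\mathcal{E}^+_{\tau_{k,m}}(\ell) = i + \mathfrak{D}_{\beta_i}$; for $\ell = 0$ (necessarily with $k \leq -1$), the $\widetilde{\mathfrak{D}}$-urn and $c = 1$ give $\mathcal{E}^+_{\tau_{k,m}}(0) = (i+1) + \widetilde{\mathfrak{D}}_{\tilde\beta_{i+1}}$; and for $\ell \leq -1$, $\mathfrak{D}_0 = 1$ and $c = 1$ give $\mathcal{E}^+_{\tau_{k,m}}(\ell) = i + \mathfrak{D}_{\beta_{i+1}}$. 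The initial distribution \eqref{blp-ic} at $\ell = k$ follows by the same arithmetic, noting that $X_{\tau_{k,m}} = k$ and $L(\tau_{k,m}, k) = m+1$ force $j_k = m$, so writing $\mathcal{E}^+_{\tau_{k,m}}(k) = j$ yields $\mathfrak{D}_m = (2j - m) + \mathfrak{D}_0$, reproducing the three rows of \eqref{blp-ic}.

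Finally, the Markov property follows from the independence of the urns at distinct sites: by induction the past $\sigma$-algebra generated by $(\mathcal{E}^+_{\tau_{k,m}}(\ell'))_{k \leq \ell' < \ell}$ is measurable with respect to the urns at sites $\leq \ell - 1$, whereas the conditional law of $\mathcal{E}^+_{\tau_{k,m}}(\ell)$ depends only on the independent urn at $\ell$, driven solely by the blue count $i + c(k,\ell)$. I expect the main obstacle to be not any single probabilistic step in isolation — each is transparent once the urn coupling is in place — but rather the case-by-case bookkeeping of initial discrepancies and the off-by-one shift $c(k,\ell)$, to ensure that all three rows of \eqref{blp-ic} and \eqref{blp-tp} emerge with the correct indices.
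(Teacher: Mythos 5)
Your proposal is correct and follows essentially the same route as the paper's proof: the independent-urn coupling, the edge-crossing balance identifying the blue-draw count at site $\ell$ as $\mathcal{E}^+_{\tau_{k,m}}(\ell-1)$ plus the $0/1$ correction (the paper's ``one more step from $\ell$ to $\ell-1$''), and the observation that the last departure from $\ell$ is leftward, so the urn at $\ell$ is observed at the stopping time $\beta_{i+c}$, which yields \eqref{blp-ic} and \eqref{blp-tp} by converting red counts to discrepancies. Your explicit treatment of the Markov property via site-by-site measurability and independence of the urns is only a more detailed spelling-out of what the paper leaves implicit.
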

\begin{proof}
 For the distribution of the initial condition in \eqref{blp-ic}, note that at time $\tau_{k,m}$ the walk has taken $m$ previous steps away from site $k$. Thus, $\mathcal{E}^+_{\tau_{k,m}}(k)=j$ if and only if $j$ of these steps have been to the right and $m-j$ of these steps have been to the left. Translating this to the corresponding urn process this means that the change in discrepancy must be $\mathfrak{D}_m-\mathfrak{D}_0=2j-m$ (or $\widetilde{\mathfrak{D}}_m-\widetilde{\mathfrak{D}}_0=2j-m$ in the case $k=0$). 
 
 For the transition probabilities in \eqref{blp-tp}, we will explain only the case when $\ell \leq -1$ as the other cases are similar. 
 In this case since the walk 
starts at $X_0=0$ and ends at $X_{\tau_{k,m}} = k < \ell < 0$ then the walk will have taken one more step from $\ell$ to $\ell-1$ than it took from $\ell-1$ to $\ell$ by time $\tau_{k,m}$,
and so the information $\mathcal{E}_{\tau_{k,m}}^+(\ell-1)=i$ means that the distribution of $\mathcal{E}_{\tau_{k,m}}^+(\ell)$ is determined by the urn process $(\mathfrak{R}_j, \mathfrak{B}_j), j\geq 0$, 
%started with $\mathfrak{D}_0 = 1$ 
for $j\leq \beta_{i+1}$. 
%the $(i+1)$-st blue ball drawn. 
%$\beta_{i+1}$ (the time of the $(i+1)$-st blue ball drawn). 
More precisely,  $\mathcal{E}_{\tau_{k,m}}^+(\ell) = i+x$ if and only if there are $i+x$ red balls drawn before the $(i+1)$-st blue ball drawn, or equivalently if 
$\mathfrak{D}_{\beta_{i+1}}-\mathfrak{D}_0 = x-1$.  Since at sites $\ell \leq -1$ the discrepancy urn process starts at $\mathfrak{D}_0 = 1$ then we have that 
%$P\left( \mathcal{E}_{\tau_{k,m}}^+(\ell) = i+x \mid \mathcal{E}_{\tau_{k,m}}^+(\ell-1) = i \right) = P_1\left( \mathfrak{R}_{\beta_{i+1}} - \mathfrak{R}_0 = i+x  \right) = P_1\left( \mathfrak{D}_{\beta_{i+1}} = x  \right)$. 
$P\left( \mathcal{E}_{\tau_{k,m}}^+(\ell) = i+x \mid \mathcal{E}_{\tau_{k,m}}^+(\ell-1) = i \right)  = P_1\left( \mathfrak{D}_{\beta_{i+1}} = x  \right)$. 
\end{proof}

For the proof of the joint GRKT we will also need to know the joint distribution of multiple of these Markov chains corresponding to stopping times $\tau_{k_1,m_1},\tau_{k_2,m_2},\ldots,\tau_{k_N,m_N}$. 
We omit the proof of the lemma below since it is essentially the same as the proof of Lemma \ref{lem:blp-transitions}. 
\begin{lemma}\label{lem:blpmulti-tp}
 For any $N\geq 1$ and any $k_1,k_2,\ldots,k_N \in \Z$ and $m_1,m_2,\ldots,m_N \geq 0$, the joint process $\left(
 \mathcal{E}^+_{\tau_{k_1,m_1}}(\ell),\mathcal{E}^+_{\tau_{k_2,m_2}}(\ell),\ldots,\mathcal{E}^+_{\tau_{k_N,m_N}}(\ell)
 \right), \ell\geq \max\{k_1,\ldots,k_N\}$, is a (non-homogeneous) Markov chain with transition probabilities given by  
\begin{align*}
 &P\left( \mathcal{E}^+_{\tau_{k_r,m_r}}(\ell) =i_r+x_r, \, r=1,2,\ldots N \mid \mathcal{E}^+_{\tau_{k_r,m_r}}(\ell-1) =i_r ,\, r=1,2,\ldots N\right) 
 \\
 &\qquad = 
 \begin{cases}
 P_1\left( \mathfrak{D}_{\beta_{i_r+1}} = x_r, \, r=1,2,\ldots N \right)  &\text{if } \ell \leq -1, \\
 P_0\left( \widetilde{\mathfrak{D}}_{\tilde\beta_{i_r+1}} = x_r-1, \, r=1,2,\ldots N \right) &\text{if } \ell = 0, \\
 P_0\left( \mathfrak{D}_{\beta_{i_r}} = x_r, \, r=1,2,\ldots N \right) &\text{if } \ell \geq 1.
 \end{cases}
\end{align*}
\end{lemma}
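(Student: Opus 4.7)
The plan is to follow the strategy of the proof of Lemma~\ref{lem:blp-transitions} and add one extra layer of bookkeeping to handle all $N$ stopping times simultaneously. The argument splits naturally into two parts: verifying the Markov property of the joint process, and then computing the joint one-step transition probabilities.

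For the Markov property, I will use the fact that the TSAW can be realized by running independent copies of the appropriate generalized Polya urn at each site (initial discrepancy $\mathfrak{D}_0 = 1$ at sites $k\le -1$, $\mathfrak{D}_0 = 0$ at sites $k\ge 1$, and the auxiliary urn $(\widetilde{\mathfrak R}_j, \widetilde{\mathfrak B}_j)$ at the origin). Conditionally on the vector of states at $\ell-1$, the joint transition to $\ell$ is determined by the urn at site $\ell$ alone, which is independent of the urns at the other sites; this immediately yields that $(\mathcal{E}^+_{\tau_{k_1,m_1}}(\ell),\ldots, \mathcal{E}^+_{\tau_{k_N,m_N}}(\ell))_{\ell\ge \max_r k_r}$ is a (non-homogeneous) Markov chain.

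For the transitions I will translate the conditioning event $\{\mathcal{E}^+_{\tau_{k_r,m_r}}(\ell-1)=i_r,\ r\le N\}$ into a statement about the urn at site $\ell$ exactly as in the proof of Lemma~\ref{lem:blp-transitions}. Since the transitions of interest are indexed by $\ell \ge \max_r k_r + 1$, the walk ends strictly to the left of $\ell$ for every $r$, so the number of left steps from $\ell$ by time $\tau_{k_r,m_r}$ equals $i_r+1$ when $\ell\le -1$ and $i_r$ when $\ell\ge 1$ (the usual $\pm 1$ asymmetry between the starting point $0$ and the endpoints $k_r$). From the perspective of the urn at site $\ell$, this means $\tau_{k_r,m_r}$ coincides with the draw at which the $(i_r+1)$-st (respectively $i_r$-th) blue ball appears, namely $\beta_{i_r+1}$ (respectively $\beta_{i_r}$). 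The crucial observation for the joint statement is then that all $N$ of these stopping times refer to the \emph{same} urn at site $\ell$, merely sampled at different numbers of blue draws; hence the joint distribution of the increments $(\mathcal{E}^+_{\tau_{k_r,m_r}}(\ell)-i_r)_{r\le N}$ is exactly the joint distribution of $(\mathfrak{D}_{\beta_{i_r+1}})_{r\le N}$ (respectively $(\mathfrak{D}_{\beta_{i_r}})_{r\le N}$) under $P_1$ (respectively $P_0$), matching the right-hand side of the lemma. The case $\ell = 0$ is handled identically using the urn $\widetilde{\mathfrak D}$, and the shift by one in $\widetilde{\mathfrak D}_{\tilde\beta_{i_r+1}} = x_r - 1$ comes from the different initial discrepancy there.

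The only thing to watch, and essentially the sole obstacle, is the consistent bookkeeping of the $\pm 1$ shifts produced by the boundary site $\ell = 0$ and by the asymmetry between the walk's start at $0$ and its various endpoints $k_r$; beyond this there is no genuinely new probabilistic input compared with the $N=1$ case.
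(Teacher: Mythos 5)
Your proposal is correct and follows essentially the same route as the paper: the paper omits the proof of Lemma~\ref{lem:blpmulti-tp}, pointing to the proof of Lemma~\ref{lem:blp-transitions}, and your argument is exactly that extension, with the key observation that all $N$ conditioned quantities at site $\ell$ are read off the \emph{same} urn at $\ell$, stopped at the blue-draw times $\beta_{i_r+1}$ (resp.\ $\beta_{i_r}$, $\tilde\beta_{i_r+1}$), while independence of the urns across sites gives the Markov property. Your bookkeeping of the $\pm1$ shifts (crossing counts $i_r+1$ for $\ell\le 0$ versus $i_r$ for $\ell\ge 1$, and the initial discrepancies $1$, $0$, $\tilde{\mathfrak D}_0=0$) matches the stated transition probabilities.
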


\subsection{Stationary distribution and convergence}
The previous subsection showed the connection of the directed edge
local times of the TSAW with the discrepancies in urn processes. Since
the joint GRKTs involve scaling limits of the directed edge local
times, it will be important to understand the distribution of
$\mathfrak{D}_j$ and $\mathfrak{D}_{\beta_j}$ for $j$ large.  The
asymptotics of distributions of $\widetilde{\mathfrak{D}}_j$ and
$\widetilde{\mathfrak{D}}_{\tilde\beta_j}$ are less important since
they are only used for a single step of the directed edge local time
process, but we will record these as well.

Since $(\mathfrak{D}_j)_{j\ge 0}$ is a birth-death Markov chain, it is easy to use the detailed balance equations to show that it has stationary distribution $\rho(x) = C' \l^{x(x-2)}\left( 1+ \l^{2x-1} \right)$, $x \in \Z$, where $C'>0$ is an appropriate normalizing constant. Similarly, 
the stationary distribution for $(\widetilde{\mathfrak{D}}_j)_{j\ge 0}$ is $\tilde\rho(x) = \t{C}' \l^{x(x-1)}(1+\l^{2x})$, $x \in \Z$. 

Since the discrepancy process $(\mathfrak{D}_j)_{j\ge 0}$ is positive recurrent, the urn will asymptotically have roughly the same number of red and blue balls. In fact, it is not hard to show from this that $\lim_{j\to\infty} \frac{\beta_j}{j} = 2$, and from this we can use the ergodic theorem for Markov chains to identify the stationary distribution for $(\mathfrak{D}_{\beta_j})_{j\ge 0}$ as
\begin{align*}
 \pi(x) = \lim_{j\to\infty} \frac{1}{j} \sum_{i=1}^j \ind{\mathfrak{D}_{\beta_i} = x} 
 % &= \lim_{j\to\infty} \frac{1}{j} \sum_{m=1}^{\beta_j} \ind{\mathfrak{D}_{m-1}=x+1, \, \mathfrak{D}_m = x} \\
 &= \lim_{j\to\infty} \left( \frac{\beta_j}{j} \right)\left( \frac{1}{\beta_j}  \sum_{m=1}^{\beta_j} \ind{\mathfrak{D}_{m-1}=x+1, \, \mathfrak{D}_m = x} \right) \\
 &= 2 \rho(x+1) \frac{1}{1+\l^{2x+1}} 
 = 2 C'\l^{x^2-1}=:C\lambda^{x^2}, \qquad \forall x \in \Z. 
\end{align*}
Similarly, the stationary distribution for $(\widetilde{\mathfrak{D}}_{\tilde\beta_j})_{j\ge 0}$ is 
$\tilde{\pi}(x) = 2 \t{C}' \l^{(x+1)x}$.
Moreover, the convergence of the distribution of $\mathfrak{D}_{\beta_j}$ to $\pi$ is exponentially fast. In fact, the following proposition gives exponential bounds on the total variation distance of the distribution of $(\mathfrak{D}_{\beta_{j_1}}, \mathfrak{D}_{\beta_{j_2}}, \ldots, \mathfrak{D}_{\beta_{j_N}} )$ from $\pi^{\otimes N}$. 
%Similar bounds on the total variation distance of the distribution of $(\widetilde{\mathfrak{D}}_{\tilde\beta_{n_1}}, \ldots, \widetilde{\mathfrak{D}}_{\tilde\beta_{n_m}})$ from $\tilde\pi^{\otimes m}$ can also be proved, 
% A similar result can also be proven for the Markov chain
% $(\widetilde{\mathfrak{D}}_{\tilde\beta_{j}})_{j\ge 0}$, but we will
% not need such a result in the present paper. 
We have already stated this fact in Proposition~\ref{coup} using the
equivalent notation (see Lemma~\ref{lem:blpmulti-tp}) of directed edge
local times. A similar result can also be proven for the Markov chain
$(\widetilde{\mathfrak{D}}_{\tilde\beta_{j}})_{j\ge 0}$, but we will
not need such a result in the present paper.

\begin{namedthm*}{Proposition \ref{coup} (restated)}
 There are constants $C_2$ and $C_3$ such that for any $N\in\N$ and any choice of $0=n_0<n_1 < n_2 < n_3 < \cdots < n_N$ we have for $x \in \{0,1\}$ that 
\begin{equation}\label{D-multiTV}
 \sum_{k_1,k_2,\ldots,k_N \in \Z} 
 \left| 
 P_x\left( \mathfrak{D}_{\beta_{n_i}} = k_i, \, i=1,2,\ldots,N \right) 
 - \prod_{i=1}^N \pi(k_i) \right| 
 \leq C_3 \sum_{i=1}^N e^{-C_2(n_i-n_{i-1})}. 
\end{equation}
\end{namedthm*}

\begin{proof}
  The one dimensional case $N=1$ was proved in \cite[Lemma 1]{tTSAW}.
  In fact, in the proof of that Lemma it was shown that the renewal
  times of the Markov chain $(\mathfrak{D}_{\beta_n})_{n\ge 0}$ have
  exponential moments (see (3.13) in \cite{tTSAW}).  Thus, it follows
  from \cite[Theorem 6.14 and Example 5.5(a)]{Num84} that for every
  $j \in \Z$ there is a constant $M(j)<\infty$ such that
 \begin{equation}\label{TV-M}
   \sum_{k \in \Z} \left| P_j\left( \mathfrak{D}_{\beta_n} = k \right)  - \pi(k) \right| \leq M(j) e^{-C_2 n}, \qquad \forall n\geq 1,  
 \end{equation}
and, moreover, that the constants $M(j)$ are such that 
\begin{equation}\label{Mbar}
 \overline{M} := \sum_{k \in \Z} \pi(k) M(k) < \infty. 
\end{equation} 
The proof of \eqref{D-multiTV} with $C_3 = \max\{M(0), M(1), \overline{M} \}$ then follows rather easily from \eqref{TV-M} and \eqref{Mbar} using induction on $N\geq 1$. 
\end{proof}

\begin{remark}\label{rem:Dtails}
  We close this section by noting that the stationary distributions
  $\pi$ and $\tilde{\pi}$ can be used to give tail estimates on
  $\mathfrak{D}_{\beta_n}$ and
  $\widetilde{\mathfrak{D}}_{\tilde\beta_n}$ that are uniform in $n$.
  Indeed, we have
  $P_i( \mathfrak{D}_{\beta_n} = j ) \leq \frac{\pi(j)}{\pi(i)}$ and
  $P_i( \widetilde{\mathfrak{D}}_{\tilde\beta_n} = j ) \leq
  \frac{\tilde{\pi}(j)}{\tilde\pi(i)}$, for all $i,j \in \Z$ and
  $n\geq 1$.  Together with Lemma \ref{lem:blp-transitions} this
  implies that the increments of the directed edge local time
  processes $(\mathcal{E}_{k,m}^+(\ell))_{\ell\geq k}$ have uniform
  Gaussian tails.  This fact will be used in Appendix \ref{sec:aux} below.
\end{remark}
 
\section{Relating the backward and the forward paths}

The following is the analog of equation \eqref{backpath} for the TSRM. Note that this formula differs from the definition of the backward paths in \cite[p.\,389]{TW98} because this paper uses a slightly different version of stopping times. 
\begin{lemma}\label{lem:TSRMweave}
 For $y\neq x$ and $h\geq 0$, $\Lambda_{x,h}(y) = \inf\left\{ h'\geq 0: \Lambda_{y,h'}(x) > h \right\}$, almost surely.
\end{lemma}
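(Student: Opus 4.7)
The plan is to adapt the argument from the discrete case (see \eqref{backpath} and the proof of Lemma~\ref{lem:weave}) to the continuous setting, relying only on the a.s.\ continuity and monotonicity in $t$ of $\mathfrak{L}(t,z)$ for each fixed $z\in\R$ together with the a.s.\ continuity of $\mathfrak{X}(\cdot)$. Set $h_0:=\Lambda_{x,h}(y)=\mathfrak{L}(\mathfrak{t}_{x,h},y)$; the claim then becomes $h_0=\inf\{h'\ge 0:\Lambda_{y,h'}(x)>h\}$.

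The crucial preliminary observation is that for each $z\in\R$ and $b\ge 0$, continuity and monotonicity of $\mathfrak{L}(\cdot,z)$ combined with \eqref{txh} yield
\[
\mathfrak{L}(s,z)>b\quad\Longleftrightarrow\quad s>\mathfrak{t}_{z,b},
\]
and in particular $\mathfrak{L}(\mathfrak{t}_{z,b},z)=b$. Applied with $(z,b)=(x,h)$, this turns the condition $\Lambda_{y,h'}(x)=\mathfrak{L}(\mathfrak{t}_{y,h'},x)>h$ into the simpler $\mathfrak{t}_{y,h'}>\mathfrak{t}_{x,h}$, so the goal reduces to showing $\inf\{h'\ge 0:\mathfrak{t}_{y,h'}>\mathfrak{t}_{x,h}\}=h_0$.

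I would verify this in two steps. For $h'<h_0$, applying the equivalence above with $(z,b)=(y,h')$ to the identity $\mathfrak{L}(\mathfrak{t}_{x,h},y)=h_0>h'$ gives $\mathfrak{t}_{x,h}>\mathfrak{t}_{y,h'}$, excluding $h'$ from the set. For $h'=h_0$, since $\mathfrak{L}(\mathfrak{t}_{x,h},y)=h_0$ is not strictly greater than $h_0$, the equivalence only gives $\mathfrak{t}_{y,h_0}\ge\mathfrak{t}_{x,h}$; to upgrade this to a strict inequality I would first note that $\mathfrak{X}(\mathfrak{t}_{x,h})=x$ (if not, continuity of $\mathfrak{X}$ would keep the process away from $x$ on a neighborhood of $\mathfrak{t}_{x,h}$, hence $\mathfrak{L}(\cdot,x)$ constant there, contradicting the defining property of $\mathfrak{t}_{x,h}$), and then, since $y\neq x$, apply continuity of $\mathfrak{X}$ once more to see that $\mathfrak{L}(\cdot,y)$ is constant on a neighborhood of $\mathfrak{t}_{x,h}$; this forces $\mathfrak{t}_{y,h_0}>\mathfrak{t}_{x,h}$. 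Combining, $h_0$ lies in the set while no smaller value does, so the infimum equals $h_0$.

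The main obstacle, and the one place where extra care is needed, is the boundary case $h_0=0$, in which the ``upgrade to strict inequality'' step above becomes vacuous. Here, $\mathfrak{L}(\mathfrak{t}_{x,h},y)=0$ says the process has not yet visited $y$ by time $\mathfrak{t}_{x,h}$, so the first visit to $y$, which is $\mathfrak{t}_{y,0}$, strictly exceeds $\mathfrak{t}_{x,h}$; monotonicity of $\mathfrak{t}_{y,\cdot}$ then gives $\mathfrak{t}_{y,h'}>\mathfrak{t}_{x,h}$ for every $h'\ge 0$, so the infimum is $0=h_0$, as required.
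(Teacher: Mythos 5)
Your proof is correct, and it reaches the conclusion by a route that differs from the paper's in the one place where something nontrivial is needed. Both arguments pivot on the same equivalence $\mathfrak{L}(s,z)>b \iff s>\mathfrak{t}_{z,b}$ (the paper's \eqref{txhprop}), but the paper then runs a case analysis on the value $b=\Lambda_{x,h}(y)$ (separately for $b=0$ and $b>0$), turning the statement into a chain of equivalences whose strict inequalities are supplied by external facts from \cite{TW98}: the almost sure non-coincidence of the inverse local times $\mathfrak{t}_{x,h}\neq\mathfrak{t}_{y,b}$, the identity $\mathfrak{t}_{y,0}=\lim_{h'\to 0^+}\mathfrak{t}_{y,h'}$, and the right continuity of $h''\mapsto\mathfrak{t}_{y,h''}$. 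You instead reduce the claim to computing $\inf\{h'\geq 0:\mathfrak{t}_{y,h'}>\mathfrak{t}_{x,h}\}$ and obtain the crucial strict inequality $\mathfrak{t}_{y,h_0}>\mathfrak{t}_{x,h}$ intrinsically: $\mathfrak{X}(\mathfrak{t}_{x,h})=x\neq y$, so by continuity of $\mathfrak{X}$ the occupation measure near $y$ does not grow on a neighborhood of $\mathfrak{t}_{x,h}$, hence $\mathfrak{L}(\cdot,y)$ is locally constant there. This buys independence from the injectivity/right-continuity facts about $(x,h)\mapsto\mathfrak{t}_{x,h}$; the price is that you need slightly more than what your opening sentence admits to, namely the occupation-density characterization of $\mathfrak{L}$ together with the spatial continuity of $\mathfrak{L}(t,\cdot)$ (both stated in Section~\ref{Prelim}), in order to pass from ``$\mathfrak{X}$ stays away from $z$ on an interval'' to ``$\mathfrak{L}(\cdot,z)$ is constant on that interval'' and thus to conclude $\mathfrak{X}(\mathfrak{t}_{x,h})=x$.

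Two small remarks. First, your separate treatment of the boundary case $h_0=0$ is unnecessary: the same local-constancy argument gives $\mathfrak{L}(t,y)\leq h_0$ for $t\leq \mathfrak{t}_{x,h}+\delta$ and hence $\mathfrak{t}_{y,h_0}>\mathfrak{t}_{x,h}$ regardless of whether $h_0$ is zero, so nothing becomes vacuous. Second, inside that case you identify $\mathfrak{t}_{y,0}$ with the first visit to $y$ and read ``$\mathfrak{L}(\mathfrak{t}_{x,h},y)=0$'' as ``$y$ not yet visited''; neither identification is justified from the stated properties (zero local time does not by itself rule out a visit), but since the general argument covers this case the looseness is harmless.
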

\begin{proof}
  It follows from the definition of $\mathfrak{t}_{x,h}$ and the relation $h = \Lambda_{x,h}(x) = \mathfrak{L}(\mathfrak{t}_{x,h},x)$ that
\begin{equation}\label{txhprop}
 s > \mathfrak{t}_{x,h} \iff \mathfrak{L}(s,x) > h. 
\end{equation}
Next, note that 
  \begin{align*}
  \Lambda_{x,h}(y) = 0
  &\iff \mathfrak{L}(\mathfrak{t}_{x,h},y) = 0 = \mathfrak{L}(\mathfrak{t}_{y,0},y) \\
  &\iff \mathfrak{t}_{x,h} < \mathfrak{t}_{y,0} \qquad \text{(recalling that $\mathfrak{t}_{x,h} \neq \mathfrak{t}_{y,0}$, a.s.)}\\
  &\iff \mathfrak{t}_{x,h} < \mathfrak{t}_{y,h'} \quad \forall h'>0 \qquad\text{(using $\mathfrak{t}_{y,0} = \lim_{h'\to 0^+} \mathfrak{t}_{y,h'}$ and  $\mathfrak{t}_{x,h} \neq \mathfrak{t}_{y,0}$ a.s.)} \\
  &\overset{\eqref{txhprop}}{\iff} \Lambda_{y,h'}(x) = \mathfrak{L}(\mathfrak{t}_{y,h'},x) > h, \quad \forall h'>0\\
  &\iff \inf\left\{ h'\geq 0: \Lambda_{y,h'}(x) > h \right\} = 0.
 \end{align*}
 %\red{where the last line holds because $h'\mapsto \Lambda_{y,h'}(x)$ is almost surely non-decreasing and right continuous (see Remark 2 on p. 387 and Theorem 4.3 in  \cite{TW98}).}
 Similarly, 
 if $\Lambda_{x,h}(y) = \mathfrak{L}(\mathfrak{t}_{x,h},y) = b$ then the definition of $\mathfrak{t}_{y,b}$ implies that $\mathfrak{t}_{x,h} \leq \mathfrak{t}_{y,b}$, but since these times are almost surely not equal then we have $\mathfrak{t}_{x,h} < \mathfrak{t}_{y,b}$. Moreover, for any $h' \in [0,b)$ we have that 
 $b > h' = \mathfrak{L}(\mathfrak{t}_{y,h'},y)$ so that $\mathfrak{t}_{y,h'} < \mathfrak{t}_{x,h}$ for all $h'\in [0,b)$. 
 Conversely, suppose that $\mathfrak{t}_{y,h'} < \mathfrak{t}_{x,h} < \mathfrak{t}_{y,b}$ for all $h' \in [0,b)$. The second inequality implies that $\Lambda_{x,h}(y) = \mathfrak{L}(\mathfrak{t}_{x,h},y) \leq  \mathfrak{L}(\mathfrak{t}_{y,b},y) = b$, while the first inequality implies (by \eqref{txhprop}) that $\Lambda_{x,h}(y)  = \mathfrak{L}(\mathfrak{t}_{x,h},y) > h'$. Since this holds for all $h' \in [0,b)$ these together imply that $\Lambda_{x,h}(y) = b$. 
 Using the above argument to get the first $\iff$ in the display below, we see that
 \begin{align*}
  \Lambda_{x,h}(y) = b
  &\iff \mathfrak{t}_{y,h'} < \mathfrak{t}_{x,h} < \mathfrak{t}_{y,b}, \quad \forall h' \in [0,b) \\
  &\iff \mathfrak{t}_{y,h'} < \mathfrak{t}_{x,h} < \mathfrak{t}_{y,h''}, \quad \forall h' \in [0,b) \text{ and } \forall h'' > b \\
  &\overset{\eqref{txhprop}}{\iff} \Lambda_{y,h'}(x) \leq h < \Lambda_{y,h''}(x), \quad \forall h' \in [0,b)  \text{ and } \forall h'' > b \\
  &\iff \inf\left\{ h'\geq 0: \Lambda_{y,h'}(x) > h \right\} = b. 
 \end{align*}
 Note that the second line holds since
 $h''\mapsto \mathfrak{t}_{y,h''}$ is right continuous by the
 definition in \eqref{txh} and the fact that
 $t\mapsto \mathfrak{L}(t,y)$ is almost surely continuous as shown in
 \cite{TW98}.  This completes the proof of the lemma.
\end{proof}

We record the following useful corollary. 
\begin{corollary}\label{cor:sandwich}
 For any $x\neq y$ and $h,h'\geq 0$, with probability 1 we have
\begin{equation}\label{TSRMsandwich}
\left\{ \Lambda_{y,h'}(x) < h \right\}
\subset
\left\{ \Lambda_{x,h}(y) > h' \right\}
\subset
\left\{ \Lambda_{y,h'}(x) \leq h \right\}
\subset
\left\{ \Lambda_{x,h}(y) \geq h' \right\}.
\end{equation}
\end{corollary}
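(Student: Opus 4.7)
The plan is to derive all three inclusions directly from Lemma \ref{lem:TSRMweave}, exploiting the fact that $h'' \mapsto \Lambda_{y,h''}(x)$ is non-decreasing and right continuous. Let me abbreviate $f(h'') := \Lambda_{y,h''}(x)$, so that Lemma \ref{lem:TSRMweave} reads $\Lambda_{x,h}(y) = \inf\{h'' \geq 0 : f(h'') > h\}$ almost surely. Monotonicity of $f$ is immediate from the fact that $h'' \mapsto \mathfrak{t}_{y,h''}$ is non-decreasing and $t\mapsto \mathfrak{L}(t,x)$ is non-decreasing; right continuity of $f$ follows by combining the right continuity of $h''\mapsto \mathfrak{t}_{y,h''}$ (built into the definition \eqref{txh}) with the continuity of $t\mapsto \mathfrak{L}(t,x)$ established in \cite{TW98}.

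For the first inclusion $\{f(h') < h\} \subset \{\Lambda_{x,h}(y) > h'\}$, I would argue that if $f(h') < h$ then right continuity produces some $\delta > 0$ with $f(h'') < h$ for all $h'' \in [h',h'+\delta)$; combined with $f(h'') \le f(h')$ for $h'' \le h'$ by monotonicity, this shows $\{h'' \geq 0 : f(h'') > h\} \subset [h'+\delta,\infty)$, and hence the infimum defining $\Lambda_{x,h}(y)$ is at least $h' + \delta > h'$. For the second inclusion $\{\Lambda_{x,h}(y) > h'\} \subset \{f(h') \leq h\}$, I would simply observe that if the infimum of $\{h'' : f(h'') > h\}$ exceeds $h'$, then $h'$ itself lies outside this set, i.e.\ $f(h') \leq h$. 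For the third inclusion $\{f(h') \leq h\} \subset \{\Lambda_{x,h}(y) \geq h'\}$, monotonicity yields $f(h'') \leq h$ for every $h'' \leq h'$, so $\{h'' : f(h'') > h\} \subset (h',\infty)$ and its infimum is $\geq h'$.

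None of the three steps looks delicate; the only subtlety is to make sure the right continuity of $h'' \mapsto f(h'')$ is genuinely available (otherwise the first inclusion could fail at a jump discontinuity approached from the right). That is precisely why the convention $\mathfrak{t}_{x,h} = T^+(x,h)$ was selected in the preliminaries, as emphasized in the remark following the introduction of $\mathfrak{t}_{x,h}$. Thus the corollary is essentially a formal consequence of Lemma \ref{lem:TSRMweave} together with the monotonicity and right continuity of the map $h'' \mapsto \Lambda_{y,h''}(x)$, and no further properties of the Brownian web construction are needed.
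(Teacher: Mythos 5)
Your proposal is correct and follows essentially the same route as the paper: all three inclusions are read off from Lemma \ref{lem:weave}'s continuous analogue (Lemma on $\Lambda_{x,h}(y)=\inf\{h''\geq 0:\Lambda_{y,h''}(x)>h\}$) together with monotonicity and right continuity of $h''\mapsto\Lambda_{y,h''}(x)$, which the paper obtains by citing \cite{TW98} and which you derive directly from the right continuity of $h''\mapsto\mathfrak{t}_{y,h''}$ and the continuity of $t\mapsto\mathfrak{L}(t,x)$. The only cosmetic difference is that the paper gets the third inclusion by taking complements in the first with the roles of $(x,h)$ and $(y,h')$ swapped, while you prove it directly from monotonicity; both are fine.
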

\begin{proof}
First of all, since $\Lambda_{y,h'}(x) = \lim_{\e \to 0^+} \Lambda_{y,h'+\e}(x)$, almost surely (see Remark 2 on p.\,387 and Theorem 4.3 in \cite{TW98}), it follows that 
\begin{equation*}
 \Lambda_{y,h'}(x) < h \ 
 \Longrightarrow \ h' \notin \overline{\{ b\geq 0: \Lambda_{y,b}(x) > h \}}\  \overset{\text{Lem.\,} \ref{lem:TSRMweave}}{\iff} \ h' < \Lambda_{x,h}(y)
\end{equation*}
This proves the first inclusion in \eqref{TSRMsandwich}. 
The second inclusion follows easily from Lemma \ref{lem:TSRMweave} because $\Lambda_{x,h}(y) > h'$ implies that $h' \notin \{b\geq 0: \Lambda_{y,b}(x) > h \}$. 
Finally, the third inclusion in \eqref{TSRMsandwich} follows from the first inclusion by taking complements of both sets and reversing the roles of $(x,h)$ and $(y,h')$. 
\end{proof}

\section{Gambler's ruin estimates}\label{sec:aux}

The main result of this section is Lemma~\ref{gr3}. It gives gambler's
ruin sort of estimates for the difference of two discrete local time
curves. These estimates are not optimal but they are sufficient to
give us Corollary~\ref{close} which is a crucial technical result
needed for the proof of convergence of the merge points of a pair of
local time curves in the joint forward GRKT, Theorem
\ref{thm:JRK-forward}.

The preliminaries and the proof of Lemma~\ref{gr3} follow the
blueprint given in \cite[p.\,1545--1548]{tTSAW} for a single
curve. Since we need a result for the difference of two curves, which
is non-Markovian, we have to work with a two-dimensional process. This
creates additional technical difficulties. In particular, while the
overshoot estimates (A4.5), (A4.6) in \cite{tTSAW} follow immediately
from the exponential bounds on the right tails, the proof of an
analogous statement for the difference process, Lemma~\ref{os},
requires a non-trivial effort and is deferred to
Appendix~\ref{sec:overshoot}.

\subsection{Preliminaries}
Recall the notation for $S^n_i$ introduced in \eqref{S}. Parameters $n$ and $x<0$ will be fixed throughout this section, so we shall drop them from the notation. The cases $x=0$ and $x>0$ will not be considered separately as their treatment requires only minor changes.

We shall first consider a single process $S(j),\ j\in\Z_+$, associated
to the directed edge local times as defined in \eqref{S} without
specifying the initial state (and dropping all indices). Let
$\xi(j):=S(j)-S(j-1)$ be its increments, $\zeta(j),j\in\N$, be i.i.d.\
$\pi$-distributed random variables, and
$Y(j)=Y(0)+\sum_{i=1}^j\zeta(j)$, $j\in\N$. Set $S_*(j)=\min_{i\le j}S(i)$ and
$Y_*(j)=\min_{i\le j}Y(i)$.

The following elementary lemma is not new. We record it for reference
purposes.

\begin{lemma}[Coupling lemma]\label{notdecoup}  Let $b\ge 2$, $S(0)=Y(0)=b$, and the
  walks $S$ and $Y$ be coupled using the maximal coupling of their
  increments. Denote the decoupling time by
  $\gamma:=\min\{j\in\N:\xi(j)\ne \zeta(j)\}$. Then for any $\delta>0$
  \begin{align}
    &P(\gamma>b^{2-\delta})\to 1\ \ \text{as }b\to\infty.\label{decoup}\\
    &P(S_*(\fl{b^{2-\delta}})>b/2)\to 1\ \ \text{as }b\to\infty.\label{min}
  \end{align}
\end{lemma}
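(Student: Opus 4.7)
The plan is to deduce (\ref{min}) from (\ref{decoup}) together with the corresponding statement for the unperturbed random walk $Y$, because on $\{\gamma>\lfloor b^{2-\delta}\rfloor\}$ the two processes coincide on $[0,\lfloor b^{2-\delta}\rfloor]$ and hence $S_*(\lfloor b^{2-\delta}\rfloor)=Y_*(\lfloor b^{2-\delta}\rfloor)$. I will therefore first prove a $Y$-analog of (\ref{min}) and then feed it into the proof of (\ref{decoup}).

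Since $\pi(x)=C\lambda^{x^2}$ is symmetric with Gaussian tails, the increments $\zeta(j)$ are centered, i.i.d., and subgaussian with variance $\sigma^2<\infty$. A standard exponential/Doob-type maximal inequality then gives
\[
 P\bigl(Y_*(\lfloor b^{2-\delta}\rfloor)\le b/2\bigr)
 = P\Bigl(\max_{j\le \lfloor b^{2-\delta}\rfloor}(Y(0)-Y(j))\ge b/2\Bigr)
 \le 2\exp\bigl(-c\, b^{\delta}\bigr)
\]
for some $c>0$, which tends to $0$ as $b\to\infty$. For (\ref{decoup}) I combine this with Proposition~\ref{coup} (specialized to $N=1$): whenever the current value of $S$ is $n\ge 1$, the conditional law of the next increment lies within total variation distance $C_3\, e^{-C_2 n}$ of $\pi$. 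Since the coupling is maximal step by step, this implies
\[
 P\bigl(\xi(j)\ne\zeta(j)\,\big|\,\mathcal{F}_{j-1}\bigr)\le C_3\, e^{-C_2 S(j-1)}\quad\text{on }\{\gamma\ge j\},
\]
where $\mathcal{F}_{j-1}$ is the $\sigma$-field generated by the coupled increments through step $j-1$.

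Setting $T=\inf\{j\ge 1:Y(j)\le b/2\}$, on $\{j\le T\wedge\gamma\}$ we have $S(j-1)=Y(j-1)>b/2$, so a union bound yields
\[
 P\bigl(\gamma\le T\wedge \lfloor b^{2-\delta}\rfloor\bigr)
 \le \lfloor b^{2-\delta}\rfloor\, C_3\, e^{-C_2 b/2}\underset{b\to\infty}{\longrightarrow} 0.
\]
Together with the subgaussian tail bound on $T$ this proves (\ref{decoup}); and on the intersection of the two high-probability events we have $S(j)=Y(j)$ for every $j\le \lfloor b^{2-\delta}\rfloor$, so $S_*(\lfloor b^{2-\delta}\rfloor)=Y_*(\lfloor b^{2-\delta}\rfloor)>b/2$, giving (\ref{min}).

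The only genuine input is the single-step TV estimate of Proposition~\ref{coup} with $N=1$, which rests on the renewal structure of the urn discrepancy chain developed in Appendix~\ref{sec:urn} (essentially \cite[Lemma~1]{tTSAW}); once that is in hand the subgaussian maximal inequality and the union bound above are routine, so I do not expect any real obstacle here.
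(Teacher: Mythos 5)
Your proof is correct and follows essentially the same route as the paper: both deduce \eqref{min} from \eqref{decoup} together with control of the unperturbed walk $Y$, and both obtain \eqref{decoup} from the $N=1$ case of Proposition~\ref{coup}, applied at each step while the walk remains above $b/2$. The only cosmetic differences are that you use a quantitative subgaussian maximal inequality and a stopping-time/union bound, where the paper invokes the invariance principle for $Y$ and the conditional product bound $\left(1-C_3e^{-C_2 b/2}\right)^{\fl{b^{2-\delta}}}$.
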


\begin{proof} Note that \eqref{min} follows from \eqref{decoup} and the invariance principle for random walks. We now prove \eqref{decoup}.
  \begin{align*}
    P(\gamma>b^{2-\delta})&\ge P\left(\gamma>b^{2-\delta},S_*(\fl{b^{2-\delta}})>b/2\right)=P\left(\gamma>b^{2-\delta},Y_*(\fl{b^{2-\delta}})>b/2\right)\\ &=P\left(\gamma>b^{2-\delta}\mid Y_*(\fl{b^{2-\delta}})>b/2\right)P\left(Y_*(\fl{b^{2-\delta}})>b/2\right)\\ &\ge \left(1-C_3e^{-C_2b/2}\right)^{\fl{b^{2-\delta}}}P\left(Y_*(\fl{b^{2-\delta}})-Y(0)>-b/2\right)\to 1\ \ \text{as}\ b\to\infty.
  \end{align*}
  In the last line we used Proposition~\ref{coup} with $N=1$ (proved already in
\cite[(3.3)-(3.5)]{tTSAW}) and the invariance
  principle for a mean $0$ random walk.
\end{proof}

Suppose now that we have two coalescing reflected/absorbed random
walks $S_1\le S_2$ as above constructed from directed edge local times
to the right from $x<0$ of the same TSAW.  Lemma~\ref{notdecoup}
  allows us to couple each $S_i$ separately to a random walk $Y_i$
  with independent $\pi$-distributed increments, $i=1,2$. We could try
  to couple a pair $(S_1,S_2)$ with $(Y_1,Y_2)$ where $Y_1$ and $Y_2$
  are independent. But this coupling will break down as $S_1$ and
  $S_2$ get close. Below we carefully analyze $S_2-S_1$ to get an
  upper bound on the coalescence time of $S_1$ and $S_2$ that we need
  for the proof of Theorem~\ref{thm:JRK-forward}.

Note that if $Y_1$ and $Y_2$ are independent, then $Y_2-Y_1$ is
a mean 0 random walk with independent increments distributed according
to $\pi_2(\cdot):=\sum_{y\in \Z}\pi(y)\pi(y+\cdot)$. The
  distribution $\pi_2$ is symmetric with a superexponential tail
decay, so the hitting time of $0$ by $Y_2-Y_1$ obeys standard
estimates (see, for instance, \cite[Section 5.1.1]{LL}). In our case,
while the joint process $(S_1(j), S_2(j))$, $j\geq 0$, is a Markov process, the difference of the walks
$S_\Delta(j):=S_2(j)-S_1(j),\ j\ge 0$, is not Markov, making the hitting of $0$ by $S_\Delta(\cdot)$ harder to control. Let
$\Delta(j):=S_2(j)-S_2(j-1)-(S_1(j)-S_1(j-1))$, $j\ge 1$, be the
increments of the difference of the two walks.

It is convenient to modify $S_1$ and $S_2$ at the absorption or
  coalescence times to prevent trapping of the modified walks and
  their difference in a bounded domain.\footnote{This is akin to
    non-trapping conditions \cite[(4.10), (4.11)]{tTSAW}.} We do it as
  follows.  If the walks coalesce before $S_1$ gets absorbed then
from the time they coalesce we let them follow two independent random
walks with the step distribution $\pi$; if $S_1$ gets absorbed before
the walks coalesce then we re-start $S_1$ from
1% particle (i.e., according to the
% distribution $P(0,\cdot-1)$, see \cite[(2.13)]{tTSAW})
. These modifications will not matter for our applications% , since we
% shall be considering the processes up to the minimum of the times when
% they coalesce or $S_1$ gets absorbed
. The modified processes will be
denoted by $\t{S}_1,\t{S}_2, \t{S}_\Delta:=\t{S}_2-\t{S}_1$. We also set
\[P_{n_1,m}(\cdot):= P(\cdot\mid \t{S}_1(0) = n_1,\t{S}_2(0)=n_1+m)\]
and similarly for the expectation.

Define
\begin{align*}
  p_\Delta(k|n_1,m):&=P_{n_1,m}(\Delta(1)=k)\\ &=\sum_{y\in\Z}P_{n_1,m}(\t{S}_1(1)=n_1+y,\t{S}_2(1)=n_1+m+y+k)
\end{align*}
Recall that $\pi_2(k)=P(\zeta(2)-\zeta(1)=k)$,
$\pi_2(k)=\pi_2(-k)$, and its tails are bounded by
$C\lambda^{k^2/2}$. From Proposition~\ref{coup} we get
\begin{equation}
  \sum_{k\in\Z}|p_\Delta(k|n_1,m)-\pi_2(k)|\le 2C_3e^{-C_2(n_1\wedge m)},\label{4.5}
\end{equation}
Since
$|E_{n_1,m}[\Delta(1)]|\le
|E_{n_1,m}[\t{S}_2(1)-(n_1+m)]|+|E_{n_1,m}[\t{S}_1(1)-n_1]|$,
Proposition~\ref{coup} (for $N=1$), the symmetry of $\pi$, and the
exponential decay of the tails of $\xi$ (see Remark \ref{rem:Dtails})
imply that
\begin{align}
 \Big|\sum_{k\in\Z}kp_\Delta(k|n_1,m)\Big|&= |E_{n_1,m}[\Delta(1)]| 
 \le C_4e^{-C_5 n_1}\ \ \text{with } C_5=C_2/2\ \ \text{and}\label{A4.1}\\ \liminf_{n_1\wedge m\to\infty}\sum_{k\in\Z}p_\Delta(k|n_1,m)e^{-C_5k}&\ge \sum_{k\in\Z}\pi_2(k)e^{-C_5k}\nonumber\\ =\pi_2(0)&+\sum_{k\in\N}\pi_2(k)\left(e^{-C_5k}+e^{C_5 k}\right)>\pi_2(0)+\sum_{k\ne 0}\pi_2(k)=1.   \label{A4.2}     
\end{align}
In particular, we can choose $a_1\in\N$ and a constant $C_6$ so that for $n_1\wedge m\ge a_1$,
\begin{equation}\label{A4.3}
  C_6\left(\sum_{k\in\Z}p_\Delta(k|n_1,m)e^{-C_5k}-1\right)\ge C_4.
\end{equation}

\begin{lemma} \label{subm}
  Let $a_1\in\N$ be chosen so that \eqref{A4.3} holds. Then for
  $n_1\ge m\ge a_1$
  \[E_{n_1,m}\left[\pm \t{S}_\Delta(1)+C_6e^{-C_5 \t{S}_\Delta(1)}\right]\ge \pm m+C_6 e^{-C_5 m}.\] Hence, as long as $\t{S}_1(j)\ge \t{S}_\Delta(j)\ge a_1$, the processes $\pm \t{S}_\Delta(j)+C_6e^{-C_5 \t{S}_\Delta(j)},\ j\ge 0$, are submartingales relative to the natural filtration of $(\t{S}_1,\t{S}_\Delta)$.
\end{lemma}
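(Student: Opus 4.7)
The plan is to establish the one-step inequality first and then upgrade to the submartingale statement by the Markov property of $(\t{S}_1,\t{S}_\Delta)$. For the one-step inequality, I fix $n_1\ge m\ge a_1$ and write $\t{S}_\Delta(1)=m+\Delta(1)$ under $P_{n_1,m}$. After subtracting the constant terms the desired bounds become
\begin{equation*}
\pm E_{n_1,m}[\Delta(1)] + C_6 e^{-C_5 m}\bigl(E_{n_1,m}[e^{-C_5\Delta(1)}]-1\bigr)\ \ge\ 0.
\end{equation*}

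The second summand is controlled using \eqref{A4.3}: since $n_1\wedge m\ge a_1$, we have $C_6(E_{n_1,m}[e^{-C_5\Delta(1)}]-1)\ge C_4$, so multiplying by $e^{-C_5 m}>0$ yields $C_6 e^{-C_5 m}(E_{n_1,m}[e^{-C_5\Delta(1)}]-1)\ge C_4 e^{-C_5 m}$. For the first summand I invoke \eqref{A4.1}, which gives $|E_{n_1,m}[\Delta(1)]|\le C_4 e^{-C_5 n_1}\le C_4 e^{-C_5 m}$ (using $n_1\ge m$ and monotonicity of $x\mapsto e^{-C_5 x}$); hence $\pm E_{n_1,m}[\Delta(1)]\ge -C_4 e^{-C_5 m}$ in both sign cases. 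Adding the two estimates gives the desired $\ge 0$, and the one-step inequality follows.

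To pass to the submartingale property, let $\mathcal F_j=\sigma(\t{S}_1(i),\t{S}_\Delta(i):i\le j)$ and suppose that $\t{S}_1(j)\ge \t{S}_\Delta(j)\ge a_1$. By the (time-homogeneous) Markov property of the pair $(\t{S}_1,\t{S}_\Delta)$, conditionally on $\mathcal F_j$ with $\t{S}_1(j)=n_1'\ge m'=\t{S}_\Delta(j)\ge a_1$, the conditional expectation of $\pm \t{S}_\Delta(j+1)+C_6 e^{-C_5 \t{S}_\Delta(j+1)}$ equals $E_{n_1',m'}[\pm \t{S}_\Delta(1)+C_6 e^{-C_5\t{S}_\Delta(1)}]$, which by the one-step inequality dominates $\pm m'+C_6 e^{-C_5 m'}=\pm\t{S}_\Delta(j)+C_6 e^{-C_5\t{S}_\Delta(j)}$. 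Integrability is automatic because $C_6 e^{-C_5 \t{S}_\Delta}$ is bounded and the increments of $\t{S}_\Delta$ have uniformly subgaussian tails (Proposition~\ref{coup} combined with Remark~\ref{rem:Dtails}), so $\t{S}_\Delta(j)$ has finite mean for every $j$. This gives the submartingale claim.

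I do not foresee any genuine obstacle: the whole point of the choice of $a_1$ via \eqref{A4.3} was to make the drift correction $C_6 e^{-C_5\cdot}$ large enough to absorb the non-centering of $\Delta(1)$ controlled by \eqref{A4.1}. The only mild subtlety is ensuring that, because of the non-Markovianity of $\t{S}_\Delta$ alone, one must work with the two-dimensional Markov chain $(\t{S}_1,\t{S}_\Delta)$ and use the hypothesis $\t{S}_1(j)\ge \t{S}_\Delta(j)\ge a_1$ to apply both \eqref{A4.1} (requiring $n_1\ge a_1$) and \eqref{A4.3} (requiring $n_1\wedge m\ge a_1$) at each step.
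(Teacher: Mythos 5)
Your proof is correct and follows essentially the same route as the paper: the one-step inequality is exactly the paper's display, obtained by combining \eqref{A4.1} (using $n_1\wedge m=m$ since $n_1\ge m$) with \eqref{A4.3} to make the term $C_6e^{-C_5 m}\bigl(E_{n_1,m}[e^{-C_5\Delta(1)}]-1\bigr)\ge C_4e^{-C_5 m}$ absorb the drift, and the passage to the submartingale statement via the Markov property of the pair $(\t{S}_1,\t{S}_\Delta)$ is the same (routine) step the paper leaves implicit.
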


\begin{proof}
  By \eqref{A4.1} and \eqref{A4.3},
  \begin{multline*}
    E_{n_1,m}\left[\pm \Delta(1)+C_6e^{-C_5 m}(e^{-C_5 \Delta(1)}-1)\right]\ge \\-C_4e^{-C_5(n_1\wedge m)} +C_6e^{-C_5 m}\left(\sum_{k\in\Z}p_\Delta(k|n_1,n_1+m)e^{-C_5 k}-1\right)\ge 0.
  \end{multline*}
\end{proof}

\subsection{Gambler's ruin for a modified process} Fix an arbitrary integer $b\ge 2$. All quantities related to modified processes introduced below depend on $b$. But we shall not reflect this fact in our notation. Let $\alpha:=\inf\{j\ge 0: \t{S}_1(j)< b\}$. To adapt the proof of \cite[Lemma 3]{tTSAW} to our setting we shall first prove the result for the process $\t{S}_\Delta$ modified starting at time $\alpha$. 
Namely, 
\begin{align*}
    \c{S}_\Delta(\ell+1)-\c{S}_\Delta(\ell)&:=
  \begin{cases}
    \t{S}_2(\ell+1)-\t{S}_2(\ell)-(\t{S}_1(\ell+1)-\t{S}_1(\ell)),&\text{if }\ell<\alpha;\\ \chi(\ell+1),&\text{if }\ell\ge\alpha,
  \end{cases}
\end{align*}
where $\chi(\ell),\ell\in\N_0$, are i.i.d.\ uniform on $\{-1,1\}$
random variables independent of everything else. In words, once
$\t{S}_1$ falls below $b$, the process $\c{S}_\Delta$ ``forgets''
about $\t{S}_1$ and starts following an independent simple symmetric
random walk.

For an interval ${\cal I}\subset [0,\infty)$ define
$\c{\theta}_{\cal I}=\inf\{\ell\ge 0: \c{S}_\Delta(\ell)\not\in{\cal I}\}$. The bounds stated in the next lemma are the analogs of
(A4.5) and (A4.6) in \cite{tTSAW}. 
\begin{lemma}[Overshoot lemma]\label{os}
  There is a constant $C_7>0$ such that for all $b\ge 2$, $a\in[0,b-2]\cap \Z$, 
$n_1\in\Z_+$, and $m\in (a,b)\cap\Z$
\begin{align}
  E_{n_1,m}(\c{S}_\Delta(\c{\theta}_{(a,b)})| \c{S}_\Delta(\c{\theta}_{(a,b)})\ge b)&\le b+C_7;\label{A4.5}\\
  E_{n_1,m}(\c{S}^2_\Delta(\c{\theta}_{(a,b)})| \c{S}_\Delta(\c{\theta}_{(a,b)})\ge b)&\le (b+C_7)^2\label{A4.6}.
\end{align}
\end{lemma}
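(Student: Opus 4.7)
The plan is to reduce the two estimates to a uniform one-step overshoot bound on the increment distribution $p_\Delta$, and then control that overshoot using the uniform Gaussian right tails of $\Delta$ inherited from Remark~\ref{rem:Dtails}, with a matching lower bound on $P_{p_\Delta}(J\geq u)$ deferred to Appendix~\ref{sec:overshoot}.

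First I would observe that on $\{\check\theta>\alpha\}$ the exit step is $\pm 1$, so whenever $\check S_\Delta(\check\theta)\geq b$ one in fact has $\check S_\Delta(\check\theta)=b$ exactly and there is no overshoot at all; hence it is enough to analyze $\{\check\theta\leq\alpha\}$, where the exit step is $\Delta(\check\theta)$, which conditionally on $\mathcal{F}_{\check\theta-1}$ is distributed as $p_\Delta(\cdot\mid n_1',m')$ with $n_1' = \widetilde S_1(\check\theta-1)\geq b$ and $m' = \check S_\Delta(\check\theta-1)\in(a,b)\cap\mathbb Z$. Decomposing $\{\check S_\Delta(\check\theta)\geq b\}$ as a disjoint union over the last step before exit and writing $u_j = b-\check S_\Delta(j-1)\geq 1$ on $\{\check\theta>j-1\}$, one sees that both \eqref{A4.5} and \eqref{A4.6} will follow from the pointwise inequalities
\[
E_{p_\Delta(\cdot\mid n_1',m')}\!\bigl[(J-u)^+\bigr] \leq C_7\, P_{p_\Delta(\cdot\mid n_1',m')}(J\geq u),
\qquad E_{p_\Delta}\!\bigl[((J-u)^+)^2\bigr] \leq C_7^2\, P_{p_\Delta}(J\geq u),
\]
uniform in $u\geq 1$ and $n_1',m'\geq 1$.

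For the numerators I would combine Lemma~\ref{lem:blp-transitions} with the tail estimates in Remark~\ref{rem:Dtails} to obtain uniform Gaussian tails on each individual increment $\xi_i(j)$, whence $\Delta(j)=\xi_2(j)-\xi_1(j)$ satisfies $P(\Delta(j)\geq u\mid\mathcal F_{j-1}) \leq C\lambda^{u^2/\kappa}$ for some $\kappa>0$, uniformly in $(n_1',m')$. Summation by parts then yields $E[(J-u)^+] \leq C'\lambda^{u^2/\kappa'}$ and $E[((J-u)^+)^2] \leq C''\lambda^{u^2/\kappa''}$ for all $u\geq 1$.

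The main obstacle I expect is the matching lower bound $P_{p_\Delta(\cdot\mid n_1',m')}(J\geq u) \geq c_0\lambda^{u^2\kappa'''}$, uniform in $n_1',m'\geq 1$: the total-variation bound \eqref{4.5} together with the explicit asymptotic $\pi_2(u)\asymp\lambda^{u^2/2}$ (immediate from $\pi(y)=C\lambda^{y^2}$) yields such a bound only once $n_1'\wedge m'$ is sufficiently large relative to $u$, and the estimate must hold uniformly. In Appendix~\ref{sec:overshoot} I plan to split into regimes: for $n_1'\wedge m'$ large, \eqref{4.5} and the lower bound $\pi_2(u)\geq c_1\lambda^{u^2/2}$ give the estimate with room to spare; for $n_1'\wedge m'$ bounded I would construct an explicit urn-path realization in which the two discrepancy processes generating $\xi_1(j)$ and $\xi_2(j)$ jointly produce a jump of size exactly $u$ with probability at least of order $\lambda^{u^2\kappa'''}$, exploiting the birth-death structure of $(\mathfrak D_j)_{j\geq 0}$ and the reflecting transitions near $0$. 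Taking the ratio of the upper and lower bounds yields the uniform constant $C_7$, and averaging over $\mathcal F_{\check\theta-1}$ gives both \eqref{A4.5} and \eqref{A4.6}.
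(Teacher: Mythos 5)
Your reduction of \eqref{A4.5}--\eqref{A4.6} to a one-step conditional overshoot estimate for the exit increment is sound and is essentially the paper's own first step: the paper likewise notes that the exit step is either a $\pm 1$ step of the auxiliary symmetric walk (no overshoot) or an increment of the difference of two directed edge local time processes, and reduces everything to the uniform conditional tail bound \eqref{Dbndiff-exptail} for $\mathfrak{D}_{\beta_{n+m}}-\mathfrak{D}_{\beta_n}$.

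The genuine gap is in how you propose to prove that one-step bound. Bounding the numerator $E[(J-u)^+]$ through the \emph{marginal} Gaussian tails of the two coordinates (Remark \ref{rem:Dtails}) forces a union bound, which only gives decay of order $\lambda^{u^2/4}$; but the true tail of the increment $J$ is of order $\lambda^{u^2/2}$ (this is exactly the tail of $\pi_2(\cdot)=\sum_y\pi(y)\pi(y+\cdot)$, as the optimal deviation splits between the two coordinates), so any correct uniform lower bound on the denominator $P(J\geq u)$ decays at least that fast. The ratio of your two bounds therefore grows like $\lambda^{-u^2/4}$ and cannot yield a uniform $C_7$; equivalently, the lower bound $P(J\geq u)\geq c\,\lambda^{u^2\kappa'''}$ with $\kappa'''\leq 1/4$ that your scheme needs is simply false once the increment law is close to $\pi_2$. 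Your regime split also leaves a hole: the total variation estimate \eqref{4.5} has an \emph{additive} error $e^{-C_2(n_1'\wedge m')}$, so it helps only when $u\lesssim\sqrt{n_1'\wedge m'}$, and the complementary regime (moderate $n_1'\wedge m'$, large $u$) is covered neither by it nor by explicit path constructions with constants independent of $u$. What is really needed is a ratio-type bound $P(J\geq u+x)\leq Ce^{-cx}P(J\geq u)$ proved from the \emph{joint} law, and this is where the paper's work lies: it conditions on the intermediate value $\mathfrak{D}_{\beta_n}$, proves geometric point-mass ratio estimates for $\mathfrak{D}_{\beta_n}$ (Lemma \ref{lem:Dtn-tails}, via the one-step identity and induction), splits according to $\mathfrak{D}_{\beta_n}\geq -b$ or $<-b$, and closes the second case with the uniform lower bound \eqref{Dmoveb} obtained from Rubin's exponential construction. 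Without an argument of this ratio type, the scheme you outline does not produce the uniform constant $C_7$.
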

The bound \eqref{A4.5} follows from \eqref{A4.6}. We will prove \eqref{A4.6} in Appendix \ref{sec:overshoot}.

Given \eqref{4.5}-\eqref{A4.6}, the proof
of the next lemma is very similar to the one of \cite[Lemma 3]{tTSAW}.

\begin{lemma}\label{gr3}
  \begin{enumerate}[label=(\alph*)]
  \item  There exists a constant $C_8>0$ such that for all $b\ge 2$, $n_1\in \Z_+$, and  $m\in(0,b)\cap \Z$ \[P_{n_1,m}(\c{S}_\Delta(\c{\theta}_{(0,b)}) = 0 )>\frac{b-m}{b+C_8}.\]
  \item  There exists a constant $C_9>0$ such that for all $b\ge 2$, $n_1\in \Z_+$, and $m\in[0,b)\cap\Z_+$ \[E_{n_1,m}[\c{\theta}_{[0,b)}]\le C_9b^3.\]
  \end{enumerate}
\end{lemma}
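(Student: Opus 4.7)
Both parts follow the blueprint of \cite[Lemma 3]{tTSAW}, adapted to the two-dimensional non-Markovian process $(\t{S}_1, \c{S}_\Delta)$ via Lemma \ref{subm} and Lemma \ref{os}. Assume throughout that $b$ is larger than the constant $a_1$ from Lemma \ref{subm}; for $b$ in the bounded range $[2, a_1]$ the stated bounds hold trivially after enlarging $C_8$ and $C_9$.

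For part (a), the central object is the function $f(x) := x - C_6 e^{-C_5 x}$. By Lemma \ref{subm} (with the ``$-$'' sign flipped), $f(\c{S}_\Delta(\ell))$ is a supermartingale before time $\alpha$ as long as $\c{S}_\Delta(\ell) \ge a_1$ (the condition $\t{S}_1 \ge \t{S}_\Delta$ is automatic there since $\c{S}_\Delta < b \le \t{S}_1$). For $\ell \ge \alpha$ the process $\c{S}_\Delta$ is a symmetric simple random walk; then $\c{S}_\Delta$ is a martingale and $e^{-C_5 \c{S}_\Delta}$ is a submartingale (direct $\cosh$ calculation), so $f(\c{S}_\Delta)$ remains a supermartingale past $\alpha$ as long as $\c{S}_\Delta \ge 0$. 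The plan is then to apply the optional stopping theorem to $f(\c{S}_\Delta)$ at the exit time $\c\theta^* := \c\theta_{(a_1, b)}$, yielding, for initial data with $m > a_1$,
\[
   m - C_6 e^{-C_5 m} \;\ge\; E_{n_1, m}\bigl[f(\c{S}_\Delta(\c\theta^*))\bigr] \;\ge\; (1 - q_+)\, a_1 \;+\; q_+\, (b - \tilde{C}),
\]
where $q_+ := P_{n_1, m}(\c{S}_\Delta(\c\theta^*) \ge b)$ and the upper-exit conditional expectation is controlled via \eqref{A4.5}. Rearranging produces a lower bound of the form $(b - m - \mathrm{const})/(b - a_1 - \mathrm{const})$ on the probability of hitting $[0, a_1]$ before $b$. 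A short strong Markov step then reduces to bounding below, uniformly in the entry point $m' \in [0, a_1]$, the probability that $\c{S}_\Delta$ reaches $0$ before $b$: since $a_1$ is a fixed constant and the step law of $\c{S}_\Delta$ has a non-trivial mass on $-1$, this is bounded below by a positive absolute constant (exhibiting an explicit $O(a_1)$-step descent to $0$). Combining and absorbing all additive $O(1)$ corrections into $C_8$ gives the claim; the case $m \le a_1$ reduces directly to the second step.

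For part (b), the plan is to track $V(\ell) := \c{S}_\Delta(\ell)^2$. A one-step computation gives
\[
   E_{n_1, m}\bigl[V(\ell+1) - V(\ell) \mid \mathcal{F}_\ell\bigr] = E[\Delta(\ell+1)^2 \mid \mathcal{F}_\ell] \;+\; 2\,\c{S}_\Delta(\ell)\, E[\Delta(\ell+1) \mid \mathcal{F}_\ell];
\]
by \eqref{4.5} the first term is bounded below by a positive constant $c_0$ (coming from $\mathrm{Var}(\pi_2) > 0$), while by \eqref{A4.1} the second term is bounded in absolute value by $2b\,C_4 e^{-C_5 \c{S}_\Delta(\ell)}$ before $\alpha$ and vanishes after. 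Hence $V(\ell) - c_0 \ell$ is, up to a correction controllable by a choice of a threshold $a_1$, a submartingale while $\c{S}_\Delta \ge a_1$. Optional stopping at $\c\theta_{(a_1, b)}$ together with \eqref{A4.6} yields an $O(b^2)$ bound on the expected time of one excursion above $a_1$; adding the $O(1)$ expected time per visit to the boundary region $[0, a_1]$ times the $O(b)$ expected number of such excursions before final exit at $0$ (estimated by a Wald/geometric argument using the lower bound from part (a)) gives the stated $O(b^3)$ bound. The power $b^3$ is not tight but suffices for the applications.

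\textbf{Main obstacle.} The conceptual content is classical gambler's ruin with an exponentially small drift correction. The main technical delicacies are: (i) controlling the overshoot at the upper boundary $b$ under the non-trivial step distribution of the increment process $\Delta$, which is precisely what \eqref{A4.6} from Lemma \ref{os} provides (with the proof deferred to Appendix \ref{sec:overshoot}); and (ii) the fact that $\c{S}_\Delta$ alone is not Markov, so the submartingale property of Lemma \ref{subm} requires simultaneously tracking $\t{S}_1$ and handling the modification at time $\alpha$---which is what makes the $\cosh$ extension past $\alpha$ indispensable and forces the two-regime optional stopping structure above.
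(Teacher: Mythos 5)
Your overall blueprint (the exponentially corrected linear functions from Lemma~\ref{subm}, optional stopping with the overshoot control of Lemma~\ref{os}, a second-moment submartingale for the time bound, and an excursion decomposition) is the same as the paper's, but two steps as you state them do not deliver the lemma. In part (a), after the first optional-stopping step you reduce to the boundary layer $m'\in(0,a_1]$ and propose to bound the probability of reaching $0$ before $b$ from there by ``a positive absolute constant''. That is not enough: for fixed $m\le a_1$ and large $b$ the claimed bound $\frac{b-m}{b+C_8}$ tends to $1$, so any proof must show this boundary-layer probability is $1-O(1/b)$, and likewise for $a_1<m<b$ a constant-factor loss turns $\frac{b-m}{b+C_6}$ into $c^*\frac{b-m}{b+C_6}$, which is not of the form $\frac{b-m}{b+C_8}$ when $b-m$ is small (and the sharp form is exactly what Corollary~\ref{close} consumes, via $1-\frac{b^2-b}{b^2+C_8}\to 0$). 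The paper closes this by a renewal-type inequality for $\inf_{0<m'\le a_1,n_1'}P_{n_1',m'}(\c{S}_\Delta(\c{\theta}_{(0,b)})=0)$: from $(0,a_1]$ either coalesce (probability bounded below uniformly in $n_1$, proved through the urn structure, not just ``mass on $-1$''), or re-enter $(a_1,b)$ at height at most $a_1+1+C_7$ on average by \eqref{A4.5} and recurse; solving gives \eqref{lb2}, a bound of the form $1-O(1/b)$, which multiplies against \eqref{lb1} without destroying the shape. Your displayed inequality also has a direction slip ($(1-q_+)a_1$ is an upper, not a lower, bound for the lower-exit contribution of $f(x)=x-C_6e^{-C_5x}$), but that is cosmetic compared with the missing recursion.

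In part (b), the second-moment submartingale plus \eqref{A4.6} correctly gives an $O(b^2)$ bound for a single excursion above the threshold, matching the paper. The gap is in counting excursions: you invoke ``the lower bound from part (a)'' in a Wald/geometric argument, but part (a) bounds the probability of exiting at the top from \emph{above} (it lower-bounds absorption at $0$), which is the wrong direction; to conclude that each passage through $(a_3,b)$ terminates $\c{\theta}_{[0,b)}$ with probability at least $C_{11}/b$ you need the separate estimate \eqref{lb3}, which the paper proves with the increasing submartingale $g(s)=s+C_6e^{-C_5s}$, the overshoot bound \eqref{A4.5}, and an extra uniform-in-$n_1$ positive-probability step near the lower threshold. (Note also that $\c{\theta}_{[0,b)}$ is the exit from $[0,b)$, not ``final exit at $0$''; $0$ lies inside the interval, and before coalescence $\c{S}_\Delta\ge 0$, so downward termination is only available through the post-coalescence/post-$\alpha$ modification.) Finally, the ``$O(1)$ expected time per visit to the boundary region'' is itself nontrivial because of the hidden coordinate $\t{S}_1$: the paper proves it via the uniform bound \eqref{lbe}, using non-trapping and the closeness of the step law to $\pi$ for large $\t{S}_1$, and an analogous uniformity is needed wherever you assert positive probabilities ``uniformly in the entry point'' but with arbitrary $n_1$.
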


\begin{proof}
  Note that if $n_1<b$ then $\c{S}_\Delta$ follows a simple symmetric
  random walk for which both statements hold by the standard Gambler's
  ruin estimates. Thus, we shall assume that $n_1\ge b$. Without
  loss of generality we also suppose that $b>a_1+1+C_7$, where $a_1$ is
  defined just above \eqref{A4.3} and $C_7$ is from \eqref{A4.5}.
  
  (a) Let $a_1<m<b$ and $f(s)=-s+C_6e^{-C_5s}$. Note that the process $f(\c{S}_\Delta(j\wedge \c{\theta}_{(a_1,b)})),\ j\ge 0$, is a submartingale relative to filtration  ${\cal G}_j:=\sigma(\t{S}_1(\ell),\c{S}_\Delta(\ell),\chi(\ell),\  0\le\ell\le j)$, $j\ge 0$. Indeed,  either $\t{S}_1$ stays above $b$ and, thus, above $\c{S}_\Delta$ up to time $\c{\theta}_{(a_1,b)}$, and the conditions of Lemma~\ref{subm} are satisfied, or, if prior to $\c{\theta}_{(a_1,b)}$ the process $\t{S}_1$ goes below $b$, $\c{S}_\Delta$ switches to a simple symmetric random walk. By the optional stopping theorem and monotonicity of the function $f$,
  \begin{align*}
    b-m&<f(m)-f(b)\le E_{n_1,m}(f(\c{S}_\Delta(\c{\theta}_{(a_1,b)}))-f(b))\\ &\le E_{n_1,m}(f(\c{S}_\Delta(\c{\theta}_{(a_1,b)}))-f(b)\mid \c{S}_\Delta(\c{\theta}_{(a_1,b)})\le a_1)P_{n_1,m}(\c{S}_\Delta(\c{\theta}_{(a_1,b)})\le a_1)\\ &< \left(b+C_6-E_{n_1,m}(\c{S}_\Delta(\c{\theta}_{(a_1,b)})\mid \c{S}_\Delta(\c{\theta}_{(a_1,b)})\le a_1)\right)P_{n_1,m}(\c{S}_\Delta(\c{\theta}_{(a_1,b)})\le a_1)
  \end{align*}
Since $\c{S}_\Delta(\c{\theta}_{(a_1,b)})\ge 0$, we get that for $a_1<m<b$
\begin{equation}
  \label{lb1}
 P_{n_1,m}(\c{S}_\Delta(\c{\theta}_{(a_1,b)})\le a_1)>\frac{b-m}{b+C_6}.
\end{equation}
Note that, by the remark made at the start of the proof, the above inequality holds for all $(n_1,m)\in\{(n_1,m)\in \Z_+^2\mid n_1<b\ \text{or}\ (n_1\ge b\ \text{and}\ a_1<m<b)\}$

Let us consider the case $0<m\le a_1$. Using \eqref{lb1} we get
\begin{multline*}
 P_{n_1,m}(\c{S}_\Delta(\c{\theta}_{(0,b)}) = 0)\\ \ge P_{n_1,m}(\c{S}_\Delta(\c{\theta}_{(0,a_1]}) = 0)+\sum_{m\ge 0}P_{n_1,m}\left(\c{S}_\Delta(\c{\theta}_{(0,a_1]})> a_1, \t{S}_1(\c{\theta}_{(0,a_1]})=m\right)\\ \times\sum_{a_1<j<b}P_{n_1,m}\left(\c{S}_\Delta(\c{\theta}_{(0,a_1]})=j\mid \c{S}_\Delta(\c{\theta}_{(0,a_1]})> a_1, \t{S}_1(\c{\theta}_{(0,a_1]})=m\right)\frac{b-j}{b+C_6}\\ \times\inf_{0<m'\le a_1,n_1'\in\Z_+} P_{n_1',m'}(\c{S}_\Delta(\c{\theta}_{(0,b)}) = 0)
\end{multline*}
Rearranging the right hand side and using \eqref{A4.5} we see that
\begin{multline*}
  P_{n_1,m}(\c{S}_\Delta(\c{\theta}_{(0,b)}) = 0)\ge P_{n_1,m}(\c{S}_\Delta(\c{\theta}_{(0,a_1]}) = 0)\\+ \frac{b-a_1-1-C_7}{b+C_6}P_{n_1,m}\left(\c{S}_\Delta(\c{\theta}_{(0,a_1]})> a_1\right)\inf_{0<m'\le a_1,n_1'\in\Z_+} P_{n_1',m'}(\c{S}_\Delta(\c{\theta}_{(0,b)}) = 0)\\=\left(1-\frac{b-a_1-1-C_7}{b+C_6}\inf_{0<m'\le a_1,n_1'\in\Z_+} P_{n_1',m'}(\c{S}_\Delta(\c{\theta}_{(0,b)}) = 0)\right)P_{n_1,m}(\c{S}_\Delta(\c{\theta}_{(0,a_1]}) = 0)\\+\frac{b-a_1-1-C_7}{b+C_6}\inf_{0<m'\le a_1,n_1'\in\Z_+}P_{n_1',m'}(\c{S}_\Delta(\c{\theta}_{(0,b)}) = 0).
\end{multline*}
We note that for all $m\le a_1$ and $n_1\in\Z_+$
\begin{align*}
P_{n_1,m}(\c{S}_\Delta(\c{\theta}_{(0,a_1]}) = 0)&\ge 
P_{n_1,m}(\c{S}_\Delta(1) = 0)\\ & \ge P_{n_1,a_1}(\c{S}_\Delta(1)=0\mid \c{S}_1(1)\ge n_1)P_{n_1}(\c{S_1}(1)\ge n_1)\\ &\ge \left(1+\lambda^{-2a_1-1}\right)^{-a_1}P_{n_1}(\c{S_1}(1)\ge n_1).   
\end{align*}
Since for all large $n_1$ the distribution of $\c{S}_1(1) - \c{S}_1(0)$ is close to $\pi$, we get that
\[\inf_{0<m'\le a_1,n_1'\in\Z_+}
P_{n_1',m'}(\c{S}_\Delta(\c{\theta}_{(0,a_1]}) = 0)=:C_{10}>0,\] and 
\begin{equation}
  \label{lb2}
  \inf_{0<m'\le a_1,n_1'\in\Z_+} P_{n_1',m'}(\c{S}_\Delta(\c{\theta}_{(0,b)}) = 0)\ge \frac{C_{10}(b+C_6)}{C_{10}b+C_6+(1-C_{10})(a_1+1+C_7)}.
\end{equation}
Inequalities \eqref{lb1} and \eqref{lb2} imply part (a).

(b) We shall first show that there is a constant $C_{11}>0$ such that for all $n_1\in\Z_+$ and integers $a$ and $m$ satisfying $0\le a<m<b$,
\begin{equation}
  \label{lb3}
  P_{n_1,m}(\c{S}_\Delta(\c{\theta}_{(a,b)})\ge b)>C_{11}\frac{m-a}{b-a}.
\end{equation}
Let $a_2=a_1\vee C_6$, $g(s)=s+C_6e^{-C_5s}$, and assume that $a_2\le a<b$ and $\c{S}_\Delta(0)=m\in(a,b)$. Consider the submartingale $g(\c{S}_\Delta(j\wedge\c{\theta}_{(a,b)})),\ j\ge 0$. Note that, by convexity of $g$ and the fact that $g(0)=C_6<a<g(a)$, if  $0\le\c{S}_\Delta(j)<a$ then $g(\c{S}_\Delta(j))<g(a)$.  Using this observation and \eqref{A4.5} we get
\begin{equation*}
  g(m)-g(a)\le E_{n_1,m}\left(g(\c{S}_\Delta(\c{\theta}_{(a,b)})-g(a)\right)\le (b+C_7+C_6-a) P_{n_1,m}(\c{S}_\Delta(\c{\theta}_{(a,b)})\ge b)
\end{equation*}
and conclude that \eqref{lb3} holds with some constant $C_{12}>0$ and
$a_2\le a<m<b$. Replacing the constant $C_{12}$ with a smaller
one,
\[C_{11}:=\min_{0\le a'<m'\le a_2}\
  \inf_{n_1'\in\Z_+}P_{n_1',m'}(\c{S}_\Delta(\c{\theta}_{(a',a_2]})>a_2)\,C_{12}>0,\]
we obtain \eqref{lb3} for all $0\le a<m<b$ and $n_1\in\Z_+$. A
  lower bound for the infimum in the previous line is obtained
  similarly to the bound just above \eqref{lb2}. Since in this case the exit 
  is through the upper boundary point, we shall restrict to the event
  $\c{S}_1(1)\le n_1'$ instead of $\c{S}_1(1)\ge n_1'$.

Non-trapping, \eqref{4.5}, and the
properties of $\pi_2$ imply that there is a constant $\c{\sigma}>0$
such that
\begin{equation*}
  \label{A4.18}
  2\c{\sigma}^2\le \inf_{n_1,m\in\Z_+}E_{n_1,m}\left[(\c{S}_\Delta(1)-m)^2\right].
\end{equation*}
Now choose $a_3\in\Z_+$ such that for all  $m\ge a_3$,
\begin{equation*}
  \label{A4.19}
  2mC_4e^{-C_5m}<\c{\sigma}^2.
\end{equation*}
Then \eqref{A4.1} ensures that for all $m>a_3$ and $n_1\in\Z_+$,
\[E_{n_1,m}\left[\c{S}_\Delta^2(1)-\c{\sigma}^2\right]\ge E_{n_1,m}\left[(\c{S}_\Delta(1)-m)^2-2m|\c{S}_\Delta(1)-m|+m^2-\c{\sigma}^2\right]\ge m^2 \]  and, hence, $\c{S}_\Delta^2(j)-\c{\sigma}^2j$, with $\c{S}_\Delta(0)>a_3$ and $j=0,1,\ldots,\c{\theta}_{(a_3,b)}$ is a submartingale. Therefore,  \[ E_{n_1,m}\left[\c{S}_\Delta^2(\c{\theta}_{(a_3,b)}\wedge T)-\c{\sigma}^2(\c{\theta}_{(a_3,b)}\wedge T)\right]\ge m^2> 0,\quad\forall T\in\N.\] Using \eqref{A4.6} we get
\begin{equation*}
  \label{A4.23}
E_{n_1,m}\left[\c{\theta}_{(a_3,b)}\right]=\lim_{T\to\infty}E_{n_1,m}\left[\c{\theta}_{(a_3,b)}\wedge T\right]\le\frac{1}{\c{\sigma}^2}(b+C_7)^2.
\end{equation*}
Representing the exit time from $[0,b)$ recursively as exits from $[0,a_3]$ and from $(a_3,b)$ and using $C_{11}/b$ as a lower bound on the right hand side of \eqref{lb3} we get that 
\begin{align*}
  E_{n_1,m}\left[\c{\theta}_{[0,b)}\right]&\le C_{11}^{-1}b\left(\sup_{0\le m\le a_3,
      n_1\in\Z_+}E_{n_1,m}\left[\c{\theta}_{[0,a_3]}\right]+\sup_{a_3< m<b,
      n_1\in\Z_+}E_{n_1,m}\left[\c{\theta}_{(a_3,b)}\right]\right)\\ &\le
  C_{11}^{-1}b\left(\sup_{0\le m\le a_3,
      n_1\in\Z_+}E_{n_1,m}\left[\c{\theta}_{[0,a_3]}\right]+\frac{1}{\c{\sigma}^2}(b+C_7)^2\right).
\end{align*}
To complete the proof of (b) we only need to show that the supremum in
the last expression is finite. The finiteness of the supremum follows
by comparison with a geometric random variable from the inequality
\begin{equation}
  \label{lbe}
  \inf_{0\le m\le a_3,
      n_1\in\Z_+}P_{n_1,m}\left(\c{\theta}_{[0,a_3]}\le (a_3+1)\right)>0,
\end{equation}
Markov property of $(\t{S}_1,\t{S}_2)$, and the definition of
$\c{S}_\Delta$.  To show \eqref{lbe} we note that, when
$\t{S}_1$ is large, its next step distribution is close to
$\pi$. Thus, given any fixed $\ell\in\N$, there is an $n_2\in\N$ such
that the probability that after the next step $S_1$ will be within
$\ell$ units from its current position is bounded away from zero
whenever the current position is larger than $n_2$.  For a given
$\ell$, on this event, the probability that $\c{S}_\Delta$ enters
$(a_3,\infty)$ in the same step is bounded away from $0$. If $\t{S}_1$
is below $b$ then  $\c{S}_\Delta$ follows a simple symmetric random walk, so
the probability that it exits $[0,a_3]$ within the next $a_3+1$ units
of time is bounded away from $0$. There are finitely many integer
values between $b$ and $n_2$, hence, due to non-trapping, if $\t{S}_1$
is in $[b,n_2]$ then the probability that in the next step
$\c{S}_\Delta$ enters $(a_3,\infty)$ is again bounded away from 0. We conclude that \eqref{lbe} holds. 
\end{proof}

We return to the original processes $S_1,S_2,S_\Delta=S_2-S_1$. In line with our previous notation, for an
interval ${\cal I}\subset [0,\infty)$, we let
$\theta_{{\cal I}}=\inf\{\ell\ge 0: S_\Delta(\ell)\not\in {\cal
  I}\}$. Note that $\theta_{(0,\infty)}$ is simply the coalescence
time of $S_1$ and $S_2$. 

\begin{corollary}\label{close} 
    \[\liminf_{b\to\infty}\inf_{\substack{n_1\ge b^4\\0< m\le b}}P_{n_1,m}(\theta_{(0,\infty)}<b^7)=1.\]
  \end{corollary}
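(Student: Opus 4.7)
The plan is to apply Lemma~\ref{gr3} not with the parameter ``$b$'' of the lemma equal to the $b$ of the corollary, but with that parameter replaced by $b^2$. Then the starting position $m\le b$ of $\check S_\Delta$ lies deep inside the interval $(0,b^2)$, which is what will make exit at the lower boundary (coalescence) overwhelmingly more likely than exit at the top. Throughout, let $\check S_\Delta=\check S_\Delta^{(b^2)}$ denote the modified process built with threshold $b^2$, i.e., with $\alpha$ replaced by $\alpha^{(b^2)}:=\inf\{j\ge 0:\tilde S_1(j)<b^2\}$.

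With this choice, Lemma~\ref{gr3}(b) immediately yields the time bound
\[
E_{n_1,m}[\check\theta_{[0,b^2)}]\le C_9(b^2)^3=C_9 b^6
\]
uniformly in $n_1\in\Z_+$ and $m\in[0,b^2)$, whence by Markov's inequality $P_{n_1,m}(\check\theta_{[0,b^2)}>b^7)\le C_9/b\to 0$. Likewise Lemma~\ref{gr3}(a) yields the exit-location bound: since $m\le b<b^2$,
\[
P_{n_1,m}\bigl(\check S_\Delta(\check\theta_{(0,b^2)})=0\bigr)>\frac{b^2-m}{b^2+C_8}\ge \frac{b^2-b}{b^2+C_8}\longrightarrow 1,
\]
uniformly in $n_1\in\Z_+$ and $m\in(0,b]$. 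The remaining task will be to show that with probability tending to $1$, uniformly in $n_1\ge b^4$, the modification is not triggered before time $b^7$, i.e.\ $\alpha^{(b^2)}>b^7$; once I have this, $\check S_\Delta$ coincides with $S_\Delta$ up to $\check\theta_{(0,b^2)}\le b^7$, and the exit at $0$ translates into $\theta_{(0,\infty)}\le\check\theta_{(0,b^2)}\le b^7$.

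For that last step I would essentially repeat the argument in the proof of Lemma~\ref{notdecoup}, but with the threshold $b/2$ there replaced by $b^2$ and with starting value $n_1\ge b^4$ in place of $b$. Namely, I maximally couple the increments of $\tilde S_1$ to i.i.d.\ $\pi$-distributed increments of a random walk $Y$ with $Y(0)=n_1$; Kolmogorov's maximal inequality applied to the mean-zero walk $Y-n_1$ (step variance $\sigma^2$) gives
\[
P\bigl(Y_*(b^7)\le b^2\bigr)\le \frac{b^7\sigma^2}{(n_1-b^2)^2}\le \frac{4\sigma^2}{b}\longrightarrow 0\qquad(n_1\ge b^4,\; b\to\infty),
\]
and Proposition~\ref{coup} (with $N=1$) bounds the probability that the coupling survives for $b^7$ steps on the event $\{Y_*(b^7)>b^2\}$ from below by $(1-C_3 e^{-C_2 b^2})^{b^7}\to 1$. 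A minor subtlety is that Lemma~\ref{gr3}(a) lower-bounds the probability of exit at \emph{exactly} $0$, whereas $\theta_{(0,\infty)}$ is the first time $S_\Delta\le 0$, but this cuts in our favor since a downward overshoot of $0$ also triggers $\theta_{(0,\infty)}$. The main substantive difficulty is just the bookkeeping required to make the four scales (starting height $m\lesssim b$; interval width $b^2$; safety margin $b^4$ for $\tilde S_1$; time horizon $b^7$) mutually compatible, though in fact the exponents $2,4,7$ are essentially forced by the three requirements $C_9 b^6\ll b^7$, $(b^2-b)/b^2\to 1$, and $b^7/(b^4)^2\to 0$.
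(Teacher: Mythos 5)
Your plan is correct and follows essentially the same route as the paper: apply Lemma~\ref{gr3}(a),(b) with $b^2$ in place of $b$ (plus Markov's inequality for $\check\theta_{[0,b^2)}$), and control the probability that $\tilde S_1$ dips low enough to trigger the modification so that $\check S_\Delta$ and $S_\Delta$ agree up to the exit time. The only difference is cosmetic: instead of citing \eqref{min} of Lemma~\ref{notdecoup} with $b$ replaced by $b^4$ and $\delta=1/4$, you re-derive that estimate directly via maximal coupling, Proposition~\ref{coup}, and Kolmogorov's inequality.
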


  \begin{proof} Let $n_1\ge b^4$ and $0< m\le b$. Note that
    $\{S_{1*}(b^7)\ge b^4/2\}\subset\{\t{S}_{1*}(b^7)\ge
    b^4/2\}$ and 
    \begin{align*}
      P_{n_1,m} &(\theta_{(0,\infty)}\ge b^7)\\
      &= P_{n_1,m}(\theta_{(0,\infty)}\ge b^7,\theta_{(0,\infty)}<\theta_{[0,b^2)})+P_{n_1,m}(\theta_{(0,\infty)}\ge b^7,\theta_{(0,\infty)}>\theta_{[0,b^2)})\\  
      &\le P_{n_1,m}(\theta_{[0,b^2)}\ge b^7)+P_{n_1,m}(S_\Delta(\theta_{(0,b^2)})\ge b^2)\\ 
      & \le 1-P_{n_1,m}(\theta_{[0,b^2)}< b^7, S_{1*}(b^7)\ge b^4/2, \t{S}_{1*}(b^7)\ge b^4/2)\\ &\qquad\qquad +1-P_{n_1,m}(S_\Delta(\theta_{(0,b^2)}) \leq 0,S_{1*}(b^7)\ge b^4/2,\theta_{[0,b^2)}<b^7, \t{S}_{1*}(b^7)\ge b^4/2)
      \\ & =  1-P_{n_1,m}(\c{\theta}_{[0,b^2)}< b^7, \t{S}_{1*}(b^7)\ge b^4/2,S_{1*}(b^7)\ge b^4/2)\\ &\qquad\qquad +1-P_{n_1,m}(\c{S}_\Delta(\theta_{(0,b^2)})= 0,\t{S}_{1*}(b^7)\ge b^4/2,\c{\theta}_{[0,b^2)}<b^7,S_{1*}(b^7)\ge b^4/2)
      \\ & \le 1-P_{n_1,m}(\c{\theta}_{[0,b^2)}< b^7)+P_{n_1,m}(S_{1*}(b^7)< b^4/2)\\ &\qquad\ \  +1-P_{n_1,m}(\c{S}_\Delta(\theta_{(0,b^2)})= 0)+P_{n_1,m}(\c{\theta}_{[0,b^2)}\ge b^7)+P_{n_1,m}(S_{1*}(b^7)< b^4/2)\\ & \le  2P_{n_1,m}(\c{\theta}_{[0,b^2)}\ge b^7)+1-P_{n_1,m}(\c{S}_\Delta(\theta_{(0,b^2)})= 0)+2P_{n_1,m}(S_{1*}(b^7)< b^4/2)\\ & \le                                                                 \frac{2E_{n_1,m}[\c{\theta}_{[0,b^2)}]}{b^7}+1-P_{n_1,m}(\c{S}_\Delta(\c{\theta}_{(0,b^2)}= 0)+2P_{n_1,m}(S_{1*}(b^7)< b^4/2)\\ &\le \frac{2C_9}{b}+1-\frac{b^2-b}{b^2+C_8}+2P_{n_1,m}(S_{1*}(b^7)< b^4/2),
    \end{align*}
    where in the last inequality we used Lemma~\ref{gr3} with $b^2$
    instead of $b$. The stated result follows by applying \eqref{min}
    with $b$ replaced by $b^4$ and $\delta=1/4$ and letting $b$ go to
    infinity.
  \end{proof}

\section{Proof of the overshoot lemma}\label{sec:overshoot}

In this section we prove Lemma~\ref{os} using the
framework detailed in Appendix~\ref{sec:urn}.
Recall that the increments of $\c{S}_\Delta$ are either given by a
simple symmetric random walk or by the increments of the difference of
two directed edge local time processes, the latter of which can be
related to urn processes as in Lemma \ref{lem:blpmulti-tp}. Therefore,
it is enough to show that there are constants $C,c>0$ such that for
all $b,n,m,x \in \N$
\begin{align}\label{Dbndiff-exptail}
 P_i\left( \mathfrak{D}_{\beta_{n+m}}-\mathfrak{D}_{\beta_n} \geq b+x \mid \mathfrak{D}_{\beta_{n+m}}-\mathfrak{D}_{\beta_n} \geq b \right) &\leq C e^{-cx}, \quad i \in \{0,1\},\ \ \text{and} 
\\\label{tDbndiff-exptail}
 P_0\left( \widetilde{\mathfrak{D}}_{\tilde\beta_{n+m}}-\widetilde{\mathfrak{D}}_{\tilde\beta_n} \geq b+x \mid \widetilde{\mathfrak{D}}_{\tilde\beta_{n+m}}-\widetilde{\mathfrak{D}}_{\tilde\beta_n} \geq b \right) &\leq C e^{-cx}. 
\end{align}
We will only prove \eqref{Dbndiff-exptail} below as the proof of
\eqref{tDbndiff-exptail} is similar.  A key tool we will use is the
following lemma which gives exponential decay on both tails of the
Markov chain $(\mathfrak{D}_{\beta_n})_{n\ge 0}$.
\begin{lemma}\label{lem:Dtn-tails}
 There exists constants $C,c>0$ such that 
 \begin{align}
  &P_x\left( \mathfrak{D}_{\beta_n} = y+\ell \right) \leq C e^{-c \ell} P_x\left( \mathfrak{D}_{\beta_n} = y \right), \quad \forall\, y\ge 0,\,  x\leq y, \, \ell \geq 0, \, n\geq 1, \label{uniftails-r} \\
  &P_x\left( \mathfrak{D}_{\beta_n} = y-\ell \right) \leq C e^{-c \ell} P_x\left( \mathfrak{D}_{\beta_n} = y \right), \quad \forall\, y\le 0,\,  x\in\Z,\, \ell \geq 0,\, n\geq 1.  \label{uniftails-l}
 \end{align}
\end{lemma}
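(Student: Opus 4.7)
My plan is to reduce the claim to fixed-time probabilities of the underlying chain $\mathfrak{D}$ and then exploit the reversibility of $\mathfrak{D}$ with respect to its stationary measure $\rho$. The two inequalities \eqref{uniftails-r} and \eqref{uniftails-l} are mirror images: the proof of \eqref{uniftails-l} is entirely analogous to that of \eqref{uniftails-r} with the roles of the upward and downward drift (on the negative and positive sides of $0$, respectively) interchanged. I will therefore sketch the argument only for \eqref{uniftails-r}.

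The first step is an exact identity. Every path of $\mathfrak{D}$ contributing to $\{\mathfrak{D}_{\beta_n}=y\}$ ends with a single down-step from $y+1$ to $y$ (the $n$-th blue draw), and since $\mathfrak{D}_0=x$ together with $\mathfrak{D}_m=y+1$ forces $B_m=n-1$ at the unique time $m:=2n+y-x-1$, one has
\[
P_x(\mathfrak{D}_{\beta_n}=y)=q_{y+1}\,P_x(\mathfrak{D}_m=y+1).
\]
Applying the analog with $y+\ell$ in place of $y$ and $m+\ell$ in place of $m$, and using $q_{y+\ell+1}/q_{y+1}\in[1/2,2]$ uniformly, the ratio in \eqref{uniftails-r} is controlled (up to a universal constant) by $P_x(\mathfrak{D}_{m+\ell}=y+\ell+1)/P_x(\mathfrak{D}_m=y+1)$. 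Applying the reversibility identity $\rho(i)P_i(\mathfrak{D}_t=j)=\rho(j)P_j(\mathfrak{D}_t=i)$ to both numerator and denominator transforms this into
\[
\frac{\rho(y+\ell+1)}{\rho(y+1)}\cdot\frac{P_{y+\ell+1}(\mathfrak{D}_{m+\ell}=x)}{P_{y+1}(\mathfrak{D}_m=x)},
\]
and a direct computation with $\rho(j)=C'\lambda^{j(j-2)}(1+\lambda^{2j-1})$ gives $\rho(y+\ell+1)/\rho(y+1)\le C_0\,\lambda^{2y\ell+\ell^2}\le C_0\,\lambda^{\ell}$ for $y\ge 0$ and $\ell\ge 1$, which already supplies the desired exponential factor.

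What remains is to show that the hitting-probability ratio $P_{y+\ell+1}(\mathfrak{D}_{m+\ell}=x)/P_{y+1}(\mathfrak{D}_m=x)$ is bounded above by a universal constant. I would decompose at $\tau:=\inf\{j\ge 0:\mathfrak{D}_j=y+1\}$ for $\mathfrak{D}$ started at $y+\ell+1$. Because $\mathfrak{D}$ has $\pm 1$ increments, $\tau\ge \ell$; the straight-line path of $\ell$ consecutive down-steps contributes $\prod_{j=1}^{\ell}q_{y+j+1}\ge\prod_{k\ge 1}q_k=:c_0>0$ uniformly in $y\ge 0,\ell\ge 0$, using $1-q_k\le\lambda^{2k-1}$ and its summability. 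The strong downward drift $p_i-q_i\le (\lambda-1)/(1+\lambda)<0$ for every $i\ge 1$ then yields an exponential tail $P_{y+\ell+1}(\tau\ge\ell+s)\le C\,e^{-cs}$ via Cram\'er's theorem. By the strong Markov property
\[
P_{y+\ell+1}(\mathfrak{D}_{m+\ell}=x)=\sum_{s\ge 0}P_{y+\ell+1}(\tau=\ell+s)\,P_{y+1}(\mathfrak{D}_{m-s}=x),
\]
and the $s=0$ contribution $\ge c_0\,P_{y+1}(\mathfrak{D}_m=x)$ dominates the sum once one has a uniform bound on $P_{y+1}(\mathfrak{D}_{m-s}=x)/P_{y+1}(\mathfrak{D}_m=x)$ for $s$ in the exponential tail.

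The main obstacle is precisely this last step: obtaining the comparison $P_{y+1}(\mathfrak{D}_{m-s}=x)\le e^{\alpha s}P_{y+1}(\mathfrak{D}_m=x)$ with $\alpha<c$, uniformly in $x,y,m$. This can be handled via a local-central-limit estimate for the reversible birth-death chain $\mathfrak{D}$ (whose mixing to $\rho$ is exponentially fast), or alternatively by coupling two time-shifted copies of $\mathfrak{D}$ so that they merge within $O(s)$ steps with positive probability. Either route yields the needed uniform bound and completes the proof.
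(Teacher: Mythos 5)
Your reduction has several correct ingredients: the identity $P_x(\mathfrak{D}_{\beta_n}=y)=P_x(\mathfrak{D}_m=y+1)\,P_{y+1}(\mathfrak{D}_1=y)$ with $m=2n+y-x-1$ is exact (on $\{\mathfrak{D}_{\beta_n}=y\}$ the time $\beta_n$ is deterministic), the chain is indeed reversible with respect to $\rho$, and $\rho(y+\ell+1)/\rho(y+1)\le C\lambda^{2y\ell+\ell^2}$ does supply the exponential factor. The gap is exactly where you flag it, and it is not just an unproven technicality: the uniform comparison $P_{y+1}(\mathfrak{D}_{m-s}=x)\le e^{\alpha s}P_{y+1}(\mathfrak{D}_m=x)$ that your factorization requires is \emph{false}. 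Take $x=y$ large, $n=2$ (so $m=3$) and $s=2$: the left side is $P_{y+1}(\mathfrak{D}_1=y)\ge 1/2$, while every length-$3$ path from $y+1$ to $y$ contains an up-step from a state $\ge y-1$, whence $P_{y+1}(\mathfrak{D}_3=y)\le 3\lambda^{2y-3}\to 0$; so no fixed $\alpha$ (or constant prefactor) works uniformly in $y$. High above the origin the down-drift is nearly $1$ and each wasted pair of steps costs a factor of order $\lambda^{2y}$. Neither of your suggested repairs can rescue this: exponentially fast mixing / local-CLT bounds control \emph{additive} errors relative to $\rho$, which is useless for ratio comparisons at sites where $\rho(x)\sim\lambda^{x^2}$ is super-exponentially small, and the coupling/lazy-return idea fails for the same reason, since the probability of returning to the current level within $O(s)$ steps is not bounded below uniformly in the height. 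The target bound $P_{y+\ell+1}(\mathfrak{D}_{m+\ell}=x)\le C\,P_{y+1}(\mathfrak{D}_m=x)$ may well be true, but only because in the same regime the weights $P_{y+\ell+1}(\tau=\ell+s)$ are also far smaller than $e^{-cs}$ (both penalties come from expensive up-steps high up); making that trade-off precise is a genuinely joint estimate that your splitting into a uniform $e^{-cs}$ tail times a uniform $e^{\alpha s}$ time-shift bound cannot capture.

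For comparison, the paper avoids fixed-time transition probabilities entirely: it works directly with the embedded chain $\mathfrak{D}_{\beta_n}$, computes the one-step ratio $r_1(y)=\lambda^{2y+1}/(1+\lambda^{2y+3})$, and proves by induction on $n$ that $r_n(y)\le \lambda^{2y+1}/(1+\lambda^{2y+3})+\bigl((1+\lambda^{2y+1})/(1+\lambda^{2y+3})\bigr)r_{n-1}(y+1)$, yielding the $n$-uniform bound $r_n(y)\le(1+\lambda^{2y+1})\lambda^{2y+1}/(1-\lambda^2)$, which is iterated in $y$; the left tail \eqref{uniftails-l} follows from an even simpler one-step comparison. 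Note also that your ``mirror image'' claim for \eqref{uniftails-l} is not automatic, since there $y\le 0$ but $x\in\Z$ is unrestricted, so the two cases are not symmetric; this, however, is secondary to the main gap above.
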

\begin{remark}
  A much stronger right tail bound for $x\in \{0,1\}$ is given in
  \cite[(3.29),(3.30)]{tTSAW}. Since we need the result starting from any
  $x\le y$, we included a proof. The proof is similar to the one in
  \cite{tTSAW}. It yields a stronger bound than \eqref{uniftails-r} or
  even than \cite[(3.29)]{tTSAW}. Since we do not need this stronger result, we opted
  for a symmetric statement for both right and left tails.
\end{remark}

\begin{proof}
 We first prove  \eqref{uniftails-l}. First of all, note that a simple computation yields that
 \begin{equation}\label{d-ud}
  \frac{P_x\left( \mathfrak{D}_1=x-1 \right)}{P_x\left( \mathfrak{D}_1=x+1, \, \mathfrak{D}_2=x \right)} 
% = \frac{ \frac{1}{1+\l^{2x-1}} }{\left( \frac{\l^{2x-1}}{1+\l^{2x-1}} \right) \left( \frac{1}{1+\l^{2x+1}} \right) } 
 = \frac{1+\l^{2x+1}}{\l^{2x-1}} 
 = \l^{-2x+1} + \l^2, \quad \forall x \in \Z. 
 \end{equation}
 Using this, we get that for any $x\in \Z$, $y\le 0$, and
 $n\geq 1$,
 \begin{align}
  P_x\left( \mathfrak{D}_{\beta_n} = y-1 \right) 
   &=P_x\left(\mathfrak{D}_{\beta_n}=y-1,\,\mathfrak{D}_{\beta_n-1} = y\right) \nonumber \\ & =\sum_i P_x\left(\mathfrak{D}_{i+1}=y-1,\,\mathfrak{D}_i = y,\,\beta_{n-1}\leq i < \beta_n\right) \nonumber \\
   &= \sum_i P_{y}\left( \mathfrak{D}_1 = y-1 \right)P_x\left(  \mathfrak{D}_i=y,\,\beta_{n-1}\leq i < \beta_n \right) \nonumber \\
  &= \left( \l^{-2y+1} + \l^2 \right) \sum_i P_{y}\left( \mathfrak{D}_1=y+1, \mathfrak{D}_2 = y \right) P_x\left(\mathfrak{D}_i=y,\, \beta_{n-1}\leq i < \beta_n \right)  \nonumber \\
  &\leq \left( \l+ \l^2 \right) P_x\left( \mathfrak{D}_{\beta_n} = y \right). \label{lt-1}
 \end{align}
 Since the multiplicative factor $\l+\l^2$ is strictly less than 1
for all $y$ sufficiently negative, the inequality
\eqref{uniftails-l} follows easily by iterating the bound in
\eqref{lt-1}.
 
 For the proof of  \eqref{uniftails-r}, we let
 \[
  r_n(y) = \sup_{x\leq y} \frac{P_x\left( \mathfrak{D}_{\beta_n} = y+1 \right)}{P_x\left( \mathfrak{D}_{\beta_n} = y \right)}, \qquad y \in \Z,\ n\geq 1. 
 \]
To bound $r_1(y)$, note that \eqref{d-ud} implies
\begin{align}
 P_x(\mathfrak{D}_{\beta_1} = y+1) 
 &= P_x\left( \mathfrak{D}_{y-x+1} = y+1 \right) P_{y+1}\left( \mathfrak{D}_1 = y+2, \, \mathfrak{D}_2 = y+1 \right)\nonumber \\
 &= P_x\left( \mathfrak{D}_{y-x+1} = y+1 \right) P_{y+1}\left( \mathfrak{D}_1 = y \right) \frac{\l^{2y+1}}{1+\l^{2y+3}} \nonumber \\
 &= P_x(\mathfrak{D}_{\beta_1} = y) \frac{\l^{2y+1}}{1+\l^{2y+3}}\ \ \Rightarrow\ \ r_1(y) = \frac{\l^{2y+1}}{1+\l^{2y+3}} \label{r1}. 
\end{align}
Next, we will use \eqref{r1} and an induction argument to get
bounds on $r_n(y)$ for $n\geq 2$.  For $x\leq y$ and $n\geq 2$ note
that
\begin{align*}
 P_x\left( \mathfrak{D}_{\beta_n} = y+1 \right) 
 &= P_x\left( \mathfrak{D}_{\beta_n} = y+1,\, \mathfrak{D}_{\beta_{n-1}} \leq y+1 \right) + P_x\left( \mathfrak{D}_{\beta_n} = y+1,\, \mathfrak{D}_{\beta_{n-1}} = y+2 \right) \\
 &= \sum_i P_x\left( \beta_{n-1}\leq i < \beta_n, \, \mathfrak{D}_i = y+1 \right) P_{y+1}\left(\mathfrak{D}_1 = y+2, \, \mathfrak{D}_2 = y+1 \right) \\
 &\qquad + P_x\left( \mathfrak{D}_{\beta_{n-1}} = y+2 \right)P_{y+2}\left( \mathfrak{D}_1 = y+1 \right) \\
 %&\leq \sum_i P_x\left( \beta_{n-1}\leq i < \beta_n, \, \mathfrak{D}_i = y+1 \right) P_{y+1}\left(\mathfrak{D}_1 = y \right) \frac{\l^{2y+1}}{1+\l^{2y+3}}  \\
 %&\qquad + r_{n-1}(y+1)P_x\left( \mathfrak{D}_{\beta_{n-1}} = y+1 \right)P_{y+1}\left( \mathfrak{D}_1 = y \right) \frac{P_{y+2}\left( \mathfrak{D}_1 = y+1 \right)}{P_{y+1}\left( \mathfrak{D}_1 = y \right)} \\
  &\leq \sum_i P_x\left( \beta_{n-1}\leq i < \beta_n, \, \mathfrak{D}_i = y+1 \right) P_{y+1}\left(\mathfrak{D}_1 = y \right) \frac{\l^{2y+1}}{1+\l^{2y+3}}  \\
 &\qquad + r_{n-1}(y+1)P_x\left( \mathfrak{D}_{\beta_{n-1}} = y+1 \right)P_{y+1}\left( \mathfrak{D}_1 = y \right) \frac{1+\l^{2y+1}}{1+\l^{2y+3}} \\
 %&= \frac{\l^{2y+1}}{1+\l^{2y+3}} P_x\left( \mathfrak{D}_{\beta_n} = y \right) + \frac{1+\l^{2y+1}}{1+\l^{2y+3}} r_{n-1}(y+1) P_x\left( \mathfrak{D}_{\beta_{n-1}}=y+1, \, \mathfrak{D}_{\beta_n} = y \right) \\
 &\leq  \left( \frac{\l^{2y+1}}{1+\l^{2y+3}}  + \frac{1+\l^{2y+1}}{1+\l^{2y+3}} r_{n-1}(y+1) \right) P_x\left( \mathfrak{D}_{\beta_n} = y \right).
\end{align*}
where the second to last inequality follows from the definition of $r_{n-1}(y+1)$ and the fact that $\frac{P_{y+2}\left( \mathfrak{D}_1 = y+1 \right)}{P_{y+1}\left( \mathfrak{D}_1 = y \right)} = \frac{1+\l^{2y+1}}{1+\l^{2y+3}}$. 
Thus, we have shown that 
\[
 r_n(y) \leq \frac{\l^{2y+1}}{1+\l^{2y+3}}  + \frac{1+\l^{2y+1}}{1+\l^{2y+3}} r_{n-1}(y+1), \quad\forall n\geq 1, \, y\in \Z. 
\]
From this and \eqref{r1} we get the estimate
\begin{equation*}
 r_n(y) \leq  \sum_{k=1}^n \frac{(1+\l^{2y+1})\l^{2(y+k)-1}}{(1+\l^{2(y+k)-1})(1+\l^{2(y+k)+1})}
 \leq \frac{(1+\l^{2y+1})\l^{2y+1}}{1-\l^2}.
\end{equation*}
Therefore, $r_n(y)\le \frac{\l^{2y+1}}{1-\l}$ for all $y\geq 0$ and $n\in \N$, and the claim in \eqref{uniftails-r} follows.  
\end{proof}

Returning to the proof of \eqref{Dbndiff-exptail}, we claim that
there exist constants $C,c>0$ such that uniformly over
$b,n,m,x \in \N$ we have the following two inequalities:
\begin{align}
 P_i\left( \mathfrak{D}_{\beta_{n+m}}-\mathfrak{D}_{\beta_n} \geq b+x, \, \mathfrak{D}_{\beta_n} \geq -b \right)
 &\leq C e^{-cx} P_i\left( \mathfrak{D}_{\beta_{n+m}}-\mathfrak{D}_{\beta_n} \geq b, \, \mathfrak{D}_{\beta_n} \geq -b \right); \label{Dbn1} \\
 P_i\left( \mathfrak{D}_{\beta_{n+m}}-\mathfrak{D}_{\beta_n} \geq b+x, \, \mathfrak{D}_{\beta_n} < -b \right)
 &\leq C e^{-cx} P_i\left( \mathfrak{D}_{\beta_{n+m}}-\mathfrak{D}_{\beta_n} \geq b, \, \mathfrak{D}_{\beta_n} < -b \right). \label{Dbn2}
\end{align}
To prove \eqref{Dbn1}, note that 
\begin{align*}
 P_i\left( \mathfrak{D}_{\beta_{n+m}}-\mathfrak{D}_{\beta_n} \geq b+x, \, \mathfrak{D}_{\beta_n} \geq -b \right)
 &= \sum_{\ell\geq 0} P_i\left( \mathfrak{D}_{\beta_n} = -b+\ell \right) P_{-b+\ell} \left( \mathfrak{D}_{\beta_m} \geq \ell+x \right) 
 \\
 &\leq C e^{-cx} \sum_{\ell\geq 0} P_i\left( \mathfrak{D}_{\beta_n} = -b+\ell \right) P_{-b+\ell} \left( \mathfrak{D}_{\beta_m} \geq \ell \right) \\
 &= C e^{-cx} P_i\left( \mathfrak{D}_{\beta_{n+m}}-\mathfrak{D}_{\beta_n} \geq b, \, \mathfrak{D}_{\beta_n} \geq -b \right), 
\end{align*}
where the inequality follows from \eqref{uniftails-r}. 

The proof of \eqref{Dbn2} is similar, but slightly more delicate. First of all, note that 
\begin{align}
 &P_i\left( \mathfrak{D}_{\beta_{n+m}}-\mathfrak{D}_{\beta_n} \geq b+x, \, \mathfrak{D}_{\beta_n} < -b \right) \nonumber \\
 &\leq P_i\left( \mathfrak{D}_{\beta_n} < -b -\fl{x/2} \right) + P_i\left( \mathfrak{D}_{\beta_{n+m}}-\mathfrak{D}_{\beta_n} \geq b+x, \, \mathfrak{D}_{\beta_n} \in \left[ -b -\fl{x/2}, -b \right) \right)  \nonumber \\
 %&\leq P_i\left( \mathfrak{D}_{\beta_{n+m}}-\mathfrak{D}_{\beta_n} \geq b+x, \, \mathfrak{D}_{\beta_n} \in \left[ -b -\fl{x/2}, -b \right) \right)  + C e^{-c\fl{x/2}} P_i\left( \mathfrak{D}_{\beta_n} < -b \right), \label{Dbn2-a}  
 &\leq C e^{-c\fl{x/2}} P_i\left( \mathfrak{D}_{\beta_n} < -b \right) + \sum_{\ell=1}^{\fl{x/2}} P_i\left( \mathfrak{D}_{\beta_n} = -b-\ell \right) P_{-b-\ell}\left( \mathfrak{D}_{\beta_m} \geq x-\ell \right), \label{Dbn2-a}  
\end{align}
where we used \eqref{uniftails-l} for the last inequality. 
For the second term in \eqref{Dbn2-a}, note that if $\ell \leq \fl{x/2}$ then we can apply \eqref{uniftails-r} to get that 
\[
 P_{-b-\ell}\left( \mathfrak{D}_{\beta_m} \geq x-\ell \right)
 \leq C e^{-c(x-\fl{x/2})} P_{-b-\ell}\left( \mathfrak{D}_{\beta_m} \geq \fl{x/2} - \ell \right) 
 \leq C e^{-cx/2}. 
\]
Applying this to \eqref{Dbn2-a} we get 
\begin{align}
 P_i\left( \mathfrak{D}_{\beta_{n+m}}-\mathfrak{D}_{\beta_n} \geq b+x, \, \mathfrak{D}_{\beta_n} < -b \right) 
% &\leq C e^{-c\fl{x/2}} P_i\left( \mathfrak{D}_{\beta_n} < -b \right) + C e^{-cx/2} P_i\left(-b-\fl{x/2} \leq \mathfrak{D}_{\beta_n} < -b \right)\\ 
 &\leq C e^{-c\fl{x/2}} P_i\left( \mathfrak{D}_{\beta_n} < -b \right) + C e^{-cx/2} P_i\left(\mathfrak{D}_{\beta_n} < -b \right) \nonumber \\
 %&\leq C(e^{-c(x-1)} + e^{-cx/2} ) P_i\left(\mathfrak{D}_{\beta_n} < -b \right) \\
 %&\leq 2C e^{-c/2} e^{-cx/2}P_i\left(\mathfrak{D}_{\beta_n} < -b \right)\\
 &\leq C' e^{-cx/2}P_i\left(\mathfrak{D}_{\beta_n} < -b \right). \label{Dbn2-b}
\end{align}
The proof of \eqref{Dbn2} will then be finished if we can bound $P_i(\mathfrak{D}_{\beta_n} <-b )$ in the last term with a multiple of
$P_i\left(  \mathfrak{D}_{\beta_n} < -b, \, \mathfrak{D}_{\beta_{n+m}} - \mathfrak{D}_{\beta_n} \geq b \right)$.
To this end, since 
\[P_i\left( \mathfrak{D}_{\beta_n}< -b, \, \mathfrak{D}_{\beta_{n+m}} - \mathfrak{D}_{\beta_n} \geq b \right)
= \sum_{k<-b} P_i( \mathfrak{D}_{\beta_n} = k ) P_k\left( \mathfrak{D}_{\beta_m} \geq k+b \right),\] 
it is enough to prove that 
\begin{equation}\label{Dmoveb}
 \inf_{b,m\in \N, \, k < -b} P_k\left( \mathfrak{D}_{\beta_m} \geq k+b \right) > 0. 
\end{equation}
To prove \eqref{Dmoveb}, note that 
 \begin{multline*}
  P_k\left( \mathfrak{D}_{\beta_m} \geq k+b \right)
  \geq P_k\left( \mathfrak{D}_b = k+b \right) P_{k+b}\left( \mathfrak{D}_{\beta_m} \geq k+b \right) \\
  = \left( \prod_{x=k}^{k+b-1} \frac{\l^{2x-1}}{1+\l^{2x-1}} \right)P_{k+b}\left( \mathfrak{D}_{\beta_m} \geq k+b \right) 
  \geq \left( \prod_{x\leq -2} \frac{\l^{2x-1}}{1+\l^{2x-1}} \right) \left( \inf_{\ell \leq -1} P_{\ell}\left( \mathfrak{D}_{\beta_m} \geq \ell \right) \right).
 \end{multline*}
The infinite product in the last line is strictly positive, so we need only to show that the infimum of probabilities are also bounded away from zero, uniformly in $m$. 
To this end, since the convergence in distribution of $\mathfrak{D}_{\beta_m}$ as $m\to \infty$ implies that 
$\inf_{m\geq 1} P_{-1}(\mathfrak{D}_{\beta_m} \geq -1 ) > 0$, then it's enough to show that the probabilities $P_{\ell}\left( \mathfrak{D}_{\beta_m} \geq \ell \right)$  are non-increasing in $\ell$  so that the infimum is achieved at $\ell=-1$. The claimed monotonicity of $P_{\ell}\left( \mathfrak{D}_{\beta_m} \geq \ell \right)$ in $\ell$ is readily seen from the classical Rubin's construction of a generalized Polya's urn process using independent exponential random variables, see \cite[pp.\,226--227]{Dav90} for the details of this construction. 
Indeed, the event $\mathfrak{D}_{\beta_m} \geq \ell = \mathfrak{D}_0$ occurs if the corresponding urn process has $m$ ``red'' draws before $m$ ``blue'' draws. 
Therefore, Rubin's construction gives that if $\{\gamma_i\}_{i\leq m}$ and $\{\gamma'_i\}_{i\geq m}$ are independent Exp(1) random variables then  
\[
 P_{\ell}\left( \mathfrak{D}_{\beta_m} \geq \ell \right) = P\left( \sum_{i=1}^m \frac{\gamma_i}{\l^{2(\ell+i-1)-1}} < \sum_{i=1}^m \frac{\gamma'_i}{\l^{2(i-1)}}  \right), 
\]
and the right side is clearly non-increasing in $\ell$. 
This completes the proof of \eqref{Dmoveb}, and thus also of \eqref{Dbn2}. 

\section*{Acknowledgments}
The authors would like to thank Thomas Mountford for many helpful
  discussions.  This work was partially supported by Collaboration
  Grants for Mathematicians \#523625 (E.K.) and \#635064 (J.P.) from
  the Simons Foundation. A part of this work was done during E.K.'s
  visiting appointment at NYU Shanghai in 2024-2025. E.K.\ also thanks
  the Simons Laufer Mathematical Sciences Institute for support
  through the membership in the program ``Probability and Statistics
  of Discrete Structures'' and for a stimulating research
  environment. 

% \bibliographystyle{alpha}
% \bibliography{SIRW}
% \end{document}

\end{document}